\numberwithin{equation}{section}
\numberwithin{figure}{section}
\theoremstyle{plain}
\newtheorem{thm}{\protect\theoremname}
\theoremstyle{definition}
\newtheorem{defn}[thm]{\protect\definitionname}
\theoremstyle{remark}
\newtheorem{rem}[thm]{\protect\remarkname}
\theoremstyle{plain}
\newtheorem{lem}[thm]{\protect\lemmaname}
\theoremstyle{plain}
\newtheorem{prop}[thm]{\protect\propositionname}
\theoremstyle{plain}
\newtheorem{cor}[thm]{\protect\corollaryname}
\theoremstyle{plain}
\newtheorem{ass}{\protect\assumptionname}
\theoremstyle{plain}
\newtheorem{ex}[thm]{\protect\examplename}
\providecommand{\corollaryname}{Corollary}
\providecommand{\definitionname}{Definition}
\providecommand{\lemmaname}{Lemma}
\providecommand{\propositionname}{Proposition}
\providecommand{\remarkname}{Remark}
\providecommand{\theoremname}{Theorem}
\providecommand{\assumptionname}{Assumption}
\providecommand{\examplename}{Example}
\newcommand{\red}[1]{{\color{red} #1}}
\newcommand{\blue}[1]{{\color{blue} #1}}
\newcommand{\cB}{\mathcal{B}}
\newcommand{\cF}{\mathcal{F}}
\newcommand{\cI}{\mathcal{I}}
\newcommand{\cL}{\mathcal{L}}
\newcommand{\cP}{\mathcal{P}}
\newcommand{\EE}{\mathbb{E}}
\newcommand{\FF}{\mathbb{F}}
\newcommand{\PP}{\mathbb{P}}
\newcommand{\RR}{\mathbb{R}}
\newcommand{\TT}{\mathbb{T}}
\newcommand{\dd}{\mathop{}\!\mathrm{d}}
\newcommand{\norm}[1]{\left\lVert #1\right\rVert}
\begin{document}

\title[]{Pathwise regularization by noise for semilinear SPDEs driven by a multiplicative cylindrical Brownian motion}
\author{Florian Bechtold \and Fabian A. Harang }
\date{\today}

\address{Florian Bechtold: Fakult\"at f\"ur Mathematik,
	Universit\"at Bielefeld, 33501 Bielefeld, Germany}
\email{fbechtold@math.uni-bielefeld.de}

\address{Fabian A. Harang: Department of Economics, BI Norwegian Business School, Handelshøyskolen BI, 0442, Oslo, Norway.}
\email{fabian.a.harang@bi.no}

\thanks{ \emph{MSC 2020:} 60H50; 60H15; 60L90
\\
\emph{Acknowledgment:} FB acknowledges support through the Bielefeld Young Researchers Fund. FH acknowledges support from the "Signatures for Images" project at the Center of Advanced studies (CAS) in Oslo, during which parts of this article was written. }

\keywords{}

\begin{abstract}
 We prove a regularization by noise phenomenon for semilinear SPDEs driven by multiplicative cylindrical Brownian motion and singular diffusion coefficient, addressing an open problem in \cite{catellier2021pathwise}. The analysis is based on a combination of infinite dimensional generalizations of arguments in \cite{bechtold} as well as careful maximal regularity analysis for semilinear SPDEs and Volterra-sewing techniques developed in \cite{Harang_Tindel21}.
\end{abstract}

\maketitle

\section{Introduction}

We investigate existence of solutions to parabolic SPDEs of the form 
\begin{equation}\label{eq:first}
    \dd v_t(x) = \Delta v_t(x)\dd t + \sigma (v_t(x))\dd W_t(x) +\dd w_t, \quad (t,x)\in [0,T]\times \TT 
\end{equation}
where $\sigma$ is singular but integrable function, $W:\Omega\times [0,T]\times \TT \rightarrow \RR$ is a space-time stochastic process on a filtered probability space $(\Omega,\cF,\PP)$  which is white in time, and $w:[0,T]\rightarrow \RR$ is a continuous sample path of a stochastic process. Such type of SPDEs are often called stochastic heat equation with multiplicative space-time noise.  In the spirit of "pathwise regularization by noise" (see e.g. \cite{CATELLIER20162323,galeati2020noiseless,harang2020cinfinity}), we will in this article investigate the interplay between conditions on  the additive continuous path $w$ and on the nonlinear coefficient $\sigma$, under which \eqref{eq:first} is well posed.  
\\

In recent years much improvement has been made in showing how the addition of stochastic processes to otherwise ill-posed (i.e. non-existing and/or non-unique equations) ODEs makes the equations  well posed. While this fact has been well known since the early discoveries of Zvonkin \cite{Zvonkin_1974}, showing that the addition of a Brownian motion establishes (probabilistic) uniqueness for certain non-unique ODEs,  the research in this direction over the last ten years has been blooming, with the availability of several new tools and techniques that go beyond the Markovian setting. In particular, in \cite{CATELLIER20162323} Catellier and Gubinelli studied pathwise well-posedness of SDEs of the form 
\begin{equation}\label{eq:SDE fbm}
   \dd x_t = b(x_t)\dd t + \dd w_t 
\end{equation}
where the stochastic process $w$ is a fractional Brownian motion with Hurst parameter $H\in (0,1)$. The authors showed that this equation has a pathwise unique solution for any $b\in \cB^\alpha_{\infty,\infty}$ for $\alpha>1-\frac{1}{2H}$ where $\cB^\alpha_{p,q}$ denotes the Besov space of regularity $\alpha$ and integrability indices $p$ and $q$. 
One of the novel insights from that article is the importance of the regularity of the {\em averaged field}, defined as the integral function 
\[
A_t(x)=\int_0^t b(x+w_r)\dd r,
\]
and how this plays a central role for proving existence and uniqueness of equations of the form \eqref{eq:SDE fbm}. 
Another inspirational insight from this article is that it is the irregularity, or roughness, of the trajectories of $w$ that seems to provide a regularizing effect for the SDE. 
This interpretation of the phenomena has later been highlighted in more detail through the connection with occupation measures and local times (see \cite{harang2020cinfinity,galeati2023prevalence}), through the local time formula 
\begin{equation}\label{averaged field}
    \int_0^t b(x-w_r)\dd r= b\ast L_t(x),
\end{equation}
where $L$ denotes the local time associated to the path $w$, see Section \ref{averaging operators} for more details. 
These techniques have later been extended upon in different directions, improved, and discussed in a great number of articles, see e.g. \cite{harang2020cinfinity,galeati2020noiseless,bechtold2,bechtold2023pathwise,Le20,athreya2022wellposedness,catellier2021pathwise,galeatiharang}. 
\\

One direction of the {\em regularization by noise} program that has received less attention is the problem of regularization when the SDE has multiplicative noise. That is, one considers a classical SDE controlled by a stochastic process $\beta$ of the form 
\begin{equation}\label{eq:second SDE}
    \dd x_t = \sigma(x_t)\dd \beta_t + \dd w_t, 
\end{equation}
 and $w$ plays the role of a potentially regularizing path. Again, in this problem we are looking for a class of paths $w$ with the property that that it restorers existence and uniqueness to \eqref{eq:second SDE} for non-Lipschitz $\sigma$, in cases when it is known that the equation is ill-posed if $w\equiv 0$. 
To the best of our knowledge, this problem was initially studied by Galeati and one of the authors of the current article in \cite{galeatiharang}, where the case when $\beta$ was a fractional Brownian motion with $H>\frac{1}{2}$ was studied. It is there proved that the regularizing effect that the noise $w$ may have on the equation also depends on the roughness of the stochastic process $\beta$. The closer $H$ was to $\frac{1}{2}$, the better regularizing properties of $w$ is required in order to obtain existence and uniqueness of \eqref{eq:second SDE}. The analysis is also there strongly influenced by the concept of averaged fields, as defined in \eqref{averaged field}, and inspired by the techniques of pathwise regularization by noise. However, the equations them self where considered as truly stochastic, and thus techniques mixing pathwise and probabilistic considerations where used to obtain stochastic strong solutions. 
Since the analysis there relied upon the condition $H>\frac{1}{2}$, the case when $\beta$ is a multiplicative Brownian motion was excluded, and thus regular It\^o SDEs could not be considered. However, in a more recent article \cite{bechtold}, Hofmanov\'a and the other author of the current  article has managed to prove (stochastic) weak existence of solutions to \eqref{eq:second SDE} in the case when $\beta$ is a standard Brownian motion, and $w$ is a regularizing path. However, this also comes at the price of being able to treat only singularities of the form $\sigma\in L^p$, as opposed to distributional $\sigma$ in \cite{galeatiharang}. The approach taken there consists again of a combination of pathwise techniques based on averaged fields as in \eqref{averaged field} and local times and classical theory of weak solutions to stochastic equations. In particular, deriving an a-priori bound on H\"older scale based on smooth approximations in combination with certain regularity estimates of averaged fields allows to extract a convergent sub-sequence after which an identification of the limit is done. 
\\

Regularization by noise for stochastic equations with multiplicative noise has also been studied in \cite{catellier2021pathwise}. There, stochastic partial differential equations (SPDEs) with multiplicative spatial noise where considered (such as the Parabolic Anderson Model), and regularization by noise where proved for equations of the form 
\begin{equation*}
        \dd v(x) = \Delta v_t(x)\dd t +\sigma(v_t(x))\xi(x)\dd t +\dd w_t,\quad (t,x)\in \RR_+\times \TT. 
\end{equation*}
Here $\sigma$ is a nonlinear function, $\xi$ is a stochastic spatial white noise  and $w$ is a continuous path (only depending on time), providing the regularizing effect. Pathwise existence and uniqueness were obtained there, even for highly singular $\sigma$, but the techniques could not be generalized to allow for a  time dependence in the multiplicative noise $\xi$.  
\\

In the current article the goal is to prove a regularization by noise phenomena for SPDEs with multiplicative {\em space time} noise. In particular, considering SPDEs of the form 
\begin{equation} \label{eq:SPDE space time noise}
    \dd v(x) = \Delta v_t(x)\dd t +\sigma(v_t(x))\xi_t(x)\dd t +\dd w_t,\quad (t,x)\in \RR_+\times \TT. 
\end{equation}
Here $\sigma$ is again a nonlinear function, $\xi$ is a stochastic space-time noise (to be specified) and $w$ is again a real valued continuous path (only depending on time). In spirit of the regularization by noise program, we will combine the techniques of 
\cite{bechtold} and \cite{catellier2021pathwise} in order to prove stochastic weak existence of this equation when $\sigma$ is a potentially singular and the multiplicative noise is fully depending on space and time, providing an answer to one open problem left in the conclusion of \cite[Sec. 6]{catellier2021pathwise}. 
\\

Our analysis is based on arguments related to infinite dimensional stochastic equations. To this end, we write the SPDE in \eqref{eq:SPDE space time noise} in its mild form (see e.g.\cite{da_prato_zabczyk_1992}), to obtain 
\begin{equation*}
    v_t = P_t v_0 + \int_{0}^t P_{t-s} \sigma (v_s) \xi_s \dd s +w_t, \quad t\in [0,T]. 
\end{equation*}
where $\{P_{t}\}_{t\in [0,T]}$ is the semi-group generated by the heat equation. 
Using the translation trick $u = v+w$, then formally $v$ solves 
\begin{equation}\label{eq:integral eq}
    u_t = P_tv_0 + \int_0^t P_{t-s} \sigma(u_s+w_s)\xi_s \dd s. 
 \end{equation}
Due to the dependence of $P$ on the difference $t-s$, we may view this equation as an infinite dimensional Volterra equation.
In our analysis, we will see the space time noise $\int_0^t \xi_s \dd s\equiv  W_t$ where $W$ is a cylindrical Wiener process on a separable Hilbert space.  
We begin to show how the integral appearing in Equation \eqref{eq:integral eq} can truly be seen as an infinite dimensional It\^o integral in Section \ref{integral def}. We then continue in Section \ref{sec:Volterra sewing} with a discussion of the regularity of averaged fields in the presence of a Volterra integral kernel. In Section \ref{sec:apriori and tightness} we prove two different a-priori bounds of the integral equation in  \eqref{eq:integral eq}, which is used in combination with the Aubin-Lions lemma to prove tightness. After obtaining potential solution candidates, we provide in Section \ref{sec:identification} an identification of the limiting equation based on martingale techniques, concluding our results.  

 \subsection{Notation} For $p\geq 1$ we let $L^p_x=L^p(\FF)$ denote the regular Lebesgue spaces, where $\FF$ is either $\RR $ or $\TT$ the one dimensional torus. The exact choice will be evident from the computations. Similarly, we let $L^p_\omega := L^p(\Omega) $.  For $\alpha\in \RR$ let $H^\alpha$ denote the $L^2_x$ Bessel-potential space (also known as the fractional Sobolev space). For $\alpha \in (0,1)$ We denote by $C^\alpha_t:=C^\alpha([0,T])$ the classical space of $\alpha$-H\"older continuous functions over some time interval $[0,T]$ that will be assumed to be fixed throughout the article. 
To shorten notation of the spaces $L^p(\RR;E)$ or $C^\alpha([0,T];E)$ for some Banach space we write $L^p_xE$ and $C^\alpha_t E$. For two Banach spaces $U$ and $E$ we let $\cL(U,E)$ denote the space of bounded linear operators from $U$ to $E$. Whenever $U=E$ we simply write $\cL(U)$. Similarly when  we let $\cL_2(L^2_x)$ denote the set of Hilbert Schmidt operators $A$, which we equip with the norm 
\begin{equation}
    \| A\|_{\cL_2(L^2_x)}^2=\sum_{i=1}^{\infty} \|A e_i\|_{L^2_x}^2,  
\end{equation}
where $\{e_i\}_i$ is an orthonormal basis in $L^2_x$.

\subsection{Main results}\label{main idea}
Before presenting the main findings, we will briefly discuss the infinite dimensional setting of the Brownian noise we consider here, and the standing assumptions on the structure of the singular coefficients $ \sigma$. 

Let $\Delta$ be the Laplacian endowed with periodic boundary conditions on the one dimensional torus $\mathbb{T}$ . 
Let $W$ be a cylindrical Wiener process on a separable Hilbert space $L^2_x :=L^2(\mathbb{T})$, i.e. formally 
 \[
 W_t=\sum_{k=1}^\infty e_k \beta_t^k
 \]
 where $(e_k)_k$ is an orthonormal basis in $L^2_x$ 
 and $(\beta^k)_k$ a sequence of independent Brownian motions.
 We assume $\sigma: L^2_x\to \cL(L^2_x)$ to be of the form
 \begin{equation}
      (\sigma (u))(\cdot)=\sum_k \sigma_k(u)\langle e_k, \cdot \rangle, \quad u\in L^2_x
      \label{form of sigma}
 \end{equation}
 for some measurable functions $\sigma_k:\RR\to \RR$. We will denote throughout the remainder of the paper
 \[
 \Sigma^2(x):=\sum_k \sigma_k^2(x), \quad x\in \RR.
 \]
Our main assumption later on will be $\Sigma^2\in L^p_x(\RR) $ for some $p\geq 1$, 
and as an example we consider the case where locally
 \[
|\Sigma^2(x)|\simeq \frac{1}{|x|^\gamma}
 \]
 for some $\gamma<1$. Remark in particular that in this setting, we have for some function $u\in L^2_x$ that the Hilbert-Schmidt norm satisfies
 \begin{equation}
 \label{control of operator norm}
     \begin{split}
         \norm{\sigma(u)}_{\cL_2(L^2_x)}^2&:=\sum_k \norm{\sigma_k(u)}_{L^2_x}^2=\sum_k \int_\mathbb{T} \sigma_k^2(u)\dd x=\int_{\mathbb{T}}\Sigma^2(u)\dd x
     \end{split}
 \end{equation}

We now provide  a definition of the solution concept we use here. This is indeed based on classical stochastic weak solutions, but adjusted to the "pathwise regularization" program that will be used here. 
It is worth noting here that the perturbation noise $w$ that enters the equation \eqref{eq:integral eq} is one-dimensional. This is both to keep the analysis clear but also to ensure that the noise is simply compatible with our later infinite dimensional analysis of the stochastic equations.

\begin{defn}
Let $w^H$ be a $1$-dimensional fractional Brownian motion of Hurst parameter $H$ on $(\Omega^H, \mathcal{F}^H, \mathbb{P}^H)$. Denote by $B^H$ the set of full probability on which the path $w^H(\omega^H)$ admits a local time $L(\omega^H)$. Let $\sigma$ be defined as in \eqref{form of sigma}. We say that $(\Omega, \mathcal{F}, (\mathcal{F}_t)_t, (W_t)_t, (u_t)_t)$ is a weak solution to 
\begin{equation}
    du_t=\Delta u_tdt+\sigma(X_t-w^H_t)dW_t, \qquad u_0\in L^2_x
    \label{main problem}
\end{equation}
if $(W_t)_t$ is  a $(\mathcal{F}_t)_t$ adapted cylindrical Brownian motion on $L^2(\mathbb{T})$, $(u_t)_t$ is a $(\mathcal{F}_t)_t$ progressively measurable process and if there exists a measurable set $S^H\subset B^H$ with $\mathbb{P}^H(S^H)=1$ such that for all $\omega^H\in S^H$, the germ $A_{s,t}=\int_\mathbb{T}(\Sigma^2*L_{s,t}(\omega^H))(u_s)dx$ admits a sewing in $L^1(\Omega)$, 
\[
\mathbb{E}\left[ \int_0^T \norm{\sigma}^2_{\cL_2(L^2(\mathbb{T}))}(u_s-w^H_s(\omega^H))ds\right]=\mathbb{E}\left[ \int_0^T \int_\mathbb{T}\Sigma^2(u_s-w^H_s(\omega^H))dxds\right]:=\norm{(\mathcal{I}A_T)}_{L^1(\Omega)}<\infty.
\]
Furthermore, for all $\omega^H\in S^H$, $\mathbb{P}$-almost surely, the solution satisfies
\[
u_t=u_0 + \int_0^t\Delta u_sds+\int_0^t\sigma(u_s-w^H_s)dW_s, 
\]
for any $t\in [0, T]$, where the stochastic integral is understood in the sense of Lemma \ref{extending the integral}.  
\label{notion of solution}
\end{defn}

Throughout the remainder of the paper,  we will make the following main assumption on the relation between the parameters $p$ and $H$. 
\begin{ass}
We assume $p\geq 1$, $\gamma_0\in (1/2, 1)$ and $H\in (0,1)$ satisfy the following conditions
\begin{equation}
\label{condition in H}
    2H<\left(1+\frac{1}{(p\wedge 4/3)}\right)^{-1}, \qquad \gamma_0<1-\left(4+\frac{1}{p/4\wedge 1/3} \right)^{-1}.
\end{equation}

\label{fixing null set}
\end{ass}

We will now present the main theorem to be proven in this article.

\begin{thm}\label{main theorem}
    Suppose $H\in (0,1)$, $\gamma_0\in (1/2, 1)$ and $p\geq 1$ satisfy Assumption \ref{fixing null set}, and suppose $u_0\in H^{\gamma_0}$. Then for any $\sigma$ of the form \eqref{form of sigma} such that $\Sigma^2\in L^p_x(\RR)$ the problem \eqref{main problem} admits a weak solution $u$ in the sense of Definition \ref{notion of solution}. Moreover, for some $m\geq 2$, we have $\mathbb{P}^H$-almost surely
    \[
    u\in L^m(\Omega; C^{\gamma_0}([0, T]; L^2_x))\cap L^m(\Omega,  L^\infty([0, T]; H^{\gamma_0}))
    \]

\end{thm}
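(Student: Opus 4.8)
The plan is to obtain the weak solution via a compactness argument on a sequence of approximating problems where the singular coefficient $\sigma$ is mollified. Concretely, let $\Sigma^2_n \in C_b^\infty$ be a smoothing of $\Sigma^2$ such that $\Sigma^2_n \to \Sigma^2$ in $L^p_x$, and let $\sigma_n$ be the corresponding (bounded, smooth) coefficients of the form \eqref{form of sigma}. For each $n$, classical theory of semilinear SPDEs with Lipschitz-type coefficients (see e.g. \cite{da_prato_zabczyk_1992}) gives a unique probabilistically strong solution $u^n$ to $\dd u^n_t = \Delta u^n_t \dd t + \sigma_n(u^n_t - w^H_t)\dd W_t$ with $u^n_0 = u_0 \in H^{\gamma_0}$, on a fixed stochastic basis carrying $W$ and the independent fractional Brownian motion $w^H$. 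The goal is then to show that the laws of $(u^n, W)$ are tight on a suitable path space, pass to a subsequential limit using Skorokhod/Jakubowski, and identify the limit as a solution in the sense of Definition \ref{notion of solution}.

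\textbf{Uniform estimates and tightness.} First I would establish the two a priori bounds promised in Section \ref{sec:apriori and tightness}, uniform in $n$: an $L^m(\Omega; C^{\gamma_0}_t L^2_x)$-type bound and an $L^m(\Omega; L^\infty_t H^{\gamma_0})$-type bound. The key point — and this is where the pathwise regularization enters — is that the naive Itô isometry bound on $\int_0^t P_{t-s}\sigma_n(u^n_s - w^H_s)\dd W_s$ would require control of $\int_\TT \Sigma_n^2(u^n_s - w^H_s)\dd x$, which blows up as $n\to\infty$ since $\Sigma^2 \in L^p_x$ only. Instead one conditions on $\omega^H$ in the set $B^H$ where $w^H$ admits a local time $L$, writes $\int_0^t \Sigma_n^2(u^n_s - w^H_s)\dd s = (\Sigma_n^2 * L_t)(u^n_s)$-type expressions, and uses the Volterra-sewing regularity estimates for averaged fields from Section \ref{sec:Volterra sewing} (building on \cite{Harang_Tindel21}) together with the local-time regularity of $w^H$ for $H$ small. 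Here the first inequality in \eqref{condition in H} is exactly what is needed: $2H < (1 + 1/(p\wedge 4/3))^{-1}$ guarantees the local time is regular enough (in a Besov-Hölder sense) that convolution with $\Sigma^2 \in L^p_x$ lands in a space compensating the singularity, uniformly in the mollification. The second condition in \eqref{condition in H} and $u_0\in H^{\gamma_0}$, combined with maximal $L^2$-regularity for the heat semigroup (the smoothing $\|P_{t-s}\|_{\cL(L^2, H^{\gamma_0})} \lesssim (t-s)^{-\gamma_0/2}$ and its Hilbert–Schmidt analogue), propagate this into the $H^{\gamma_0}$-bound and the $C^{\gamma_0}_t L^2_x$-bound. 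Given these uniform bounds, tightness on $C_t L^2_x \cap L^\infty_t H^{\gamma_0-}$ (with the weak-$*$ topology on the latter factor) follows from the Aubin–Lions–Simon compactness lemma: $H^{\gamma_0} \hookrightarrow\hookrightarrow L^2_x$ compactly on $\TT$, and the uniform time-Hölder bound supplies equicontinuity.

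\textbf{Passage to the limit and identification.} After extracting a subsequence and invoking Skorokhod's representation (or Jakubowski's version on the quasi-Polish space in question), I get new random variables $(\tilde u^n, \tilde W^n) \to (\tilde u, \tilde W)$ almost surely in the path topology, on a new basis, with $\tilde W^n$ still a cylindrical Brownian motion. The identification of $\tilde u$ as a weak solution proceeds by the standard martingale method: one shows that $M^n_t := \tilde u^n_t - u_0 - \int_0^t \Delta \tilde u^n_s \dd s$ is a (local, $\cL_2$-valued) martingale with the correct quadratic variation $\int_0^t \sigma_n(\tilde u^n_s - w^H_s)\sigma_n(\tilde u^n_s - w^H_s)^* \dd s$, passes these relations to the limit using the a.s. convergence plus a uniform-integrability upgrade coming from the $L^m$-bounds with $m>2$, and then applies a representation theorem (martingale representation / Levy characterization in the Hilbert-space setting) to recover the driving noise and the stochastic integral in the sense of Lemma \ref{extending the integral}. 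The finiteness of $\EE[\int_0^T \int_\TT \Sigma^2(u_s - w^H_s)\dd x\,\dd s]$ and the existence of the sewing $\mathcal{I}A_T$ demanded in Definition \ref{notion of solution} are read off from the same uniform averaged-field estimates, now passed to the limit via lower semicontinuity / Fatou. Finally, one checks that the exceptional $\omega^H$-set can be taken inside $B^H$ with full $\PP^H$-measure, as in Assumption \ref{fixing null set}.

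\textbf{Main obstacle.} The crux is the uniform-in-$n$ stochastic estimate on the Volterra convolution $\int_0^t P_{t-s}\sigma_n(u^n_s - w^H_s)\dd W_s$: one must interpolate the parabolic smoothing of $P_{t-s}$ (which costs an integrable singularity $(t-s)^{-\gamma_0/2}$) against the gain from averaging the singular nonlinearity along $w^H$ (which is only available after a stochastic-sewing / Volterra-sewing argument, and only for $H$ small), all while keeping Hilbert–Schmidt norms under control and respecting the nonlinear dependence $u^n_s$ inside the averaged field. Balancing these two effects is precisely the arithmetic encoded in \eqref{condition in H}, and making the averaged-field bound robust under mollification — rather than just valid for fixed smooth $\sigma$ — is the technical heart of the argument.
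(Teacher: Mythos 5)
Your overall strategy — mollify $\sigma$, solve the classical semilinear problem, derive the two uniform a priori bounds ($C^{\gamma_0/2}_t L^m_\omega L^2_x$ and $L^m_\omega L^\infty_t H^{\gamma_0}_x$) by trading the heat-semigroup smoothing against the averaged-field/Volterra-sewing gain from the local time of $w^H$, apply Aubin--Lions plus Prokhorov--Skorokhod, and then identify the limit by a martingale method — is exactly the route the paper takes, and your reading of the role of the two conditions in \eqref{condition in H} is accurate.

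There is, however, a genuine gap in your final identification step. You propose to conclude by "applying a representation theorem (martingale representation / Levy characterization in the Hilbert-space setting)". The paper explicitly points out that the martingale representation theorem is \emph{not} available here, precisely because the stochastic integral against the singular $\sigma$ is only defined through the robustified sewing construction of Lemma \ref{extending the integral} rather than through classical $L^2_\omega L^2_t$ It\^o theory; there is no square-integrable integrand in the usual sense to represent against. The paper instead works with \emph{three} martingale relations — not only that $M^j$ is a martingale with the correct quadratic variation, but also the cross-variation relation $t\mapsto M^j_t\langle \bar W_t, e_i\rangle - (\mathcal{I}a^{i,j})_t$, where the bracket is itself a sewing — passes all three to the limit (which requires the stability Lemma \ref{sewing convergence} and the convergence Lemma \ref{main convergence lemma} for the quadratic-variation term), and then shows directly that $\bar{\mathbb{E}}[\langle M_t - \int_0^t\sigma(\bar u_s - w_s)\,d\bar W_s, e_j\rangle^2]=0$ by expanding the square and computing the cross term via a Riemann--Stieltjes integration against $t\mapsto(\mathcal{I}a^{k,j})_t$, whose bounded variation must itself be established from the sewing bounds. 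Without the cross-variation relation your argument can at best produce \emph{some} driving noise via representation (which is unavailable), not the integral against the specific $\tilde W$ obtained from Skorokhod; so the identification as written would not close. The rest of the proposal matches the paper's proof.
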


As a particular application and immediate consequence, we illustrate Theorem \ref{main theorem}  with a corollary in the case when $w$ is a fractional Brownian motion and $\sigma$ is singular in the sense that  $|\Sigma(x)|\simeq \frac{1}{|x|^\gamma}$ for some $0<\gamma<1$: 
\begin{cor}
For $p\geq 1$ and $H\in (0,1)$ suppose Assumption \ref{fixing null set} holds. Given the setting of Theorem \ref{main theorem}, suppose $\sigma$ is a singular function, such that $|\Sigma^2(x)|\simeq \frac{r(x)}{|x|^\gamma}$, where $r(x)\equiv 1$ on some subset $A\subset \RR$ of finite size, and decays exponentially outside this set. Then if $\gamma<\frac{1}{p}$, then there exists a weak solution to \eqref{eq:SPDE space time noise} for $\PP$-almost all samples paths of the Brownian motion $w$ with Hurst parameter $H$.     
\end{cor}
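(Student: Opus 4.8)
The corollary should follow almost immediately from Theorem~\ref{main theorem}: the only thing to check is that the hypothesis $|\Sigma^2(x)|\simeq r(x)/|x|^\gamma$ with $\gamma<1/p$ and $r$ compactly supported (or exponentially decaying) forces $\Sigma^2\in L^p_x(\RR)$, so that Theorem~\ref{main theorem} applies verbatim. Concretely, I would split the integral $\int_\RR |\Sigma^2(x)|^p\,\dd x$ into the contribution near the singularity at $x=0$ and the contribution on the tails. Near $0$, $|\Sigma^2(x)|^p\simeq |x|^{-\gamma p}$, which is integrable on a neighbourhood of the origin precisely because $\gamma p<1$; away from $0$ on the set $A$ we have a bounded integrand on a set of finite Lebesgue measure, hence integrable; and outside $A$ the factor $r(x)$ decays exponentially, dominating the polynomial blow-up, so the tail integral converges. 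Thus $\Sigma^2\in L^p_x(\RR)$.

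Once $\Sigma^2\in L^p_x(\RR)$ is established, and given that Assumption~\ref{fixing null set} is assumed to hold for the chosen $p$ and $H$ (with $\gamma_0\in(1/2,1)$ picked to satisfy the second inequality in \eqref{condition in H}, and the initial datum $u_0\in H^{\gamma_0}$), Theorem~\ref{main theorem} directly produces a weak solution $u$ to \eqref{main problem} in the sense of Definition~\ref{notion of solution}, for $\PP^H$-almost every sample path of the fractional Brownian motion $w=w^H$. Undoing the translation trick $v=u-w^H$ (equivalently $u=v+w$ as in the derivation of \eqref{eq:integral eq}) then yields a weak solution $v$ of the original SPDE \eqref{eq:SPDE space time noise} with the stated regularity, for $\PP$-almost all paths of $w^H$. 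I would also record that the integrability and sewing conditions in Definition~\ref{notion of solution} are exactly the ones guaranteed by the theorem, so no additional verification is needed.

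The proof is therefore essentially a one-line deduction plus an elementary integrability check; there is no genuine obstacle. The only point requiring a modicum of care is making precise the phrase ``decays exponentially outside this set'' so that the tail integral is unambiguously finite --- I would simply state that there exist constants $C,\lambda>0$ with $r(x)\le C e^{-\lambda|x|}$ for $x\notin A$, from which $\int_{A^c}|x|^{-\gamma p}e^{-\lambda p|x|}\,\dd x<\infty$ is immediate (the polynomial singularity of $|x|^{-\gamma p}$ at $0$ does not interfere since, $A$ having finite size and containing a neighbourhood of any potential singularity or not, one can always enlarge $A$ slightly to absorb a neighbourhood of $0$ without affecting finiteness of measure). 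With that convention fixed, the conclusion $\Sigma^2\in L^p_x$ and hence the existence of a weak solution follows.
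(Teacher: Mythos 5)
Your proposal is correct and matches the paper's (implicit) argument: the corollary is stated there as an immediate consequence of Theorem \ref{main theorem}, the only content being the elementary check that $\gamma p<1$ together with the finite-measure/exponential-decay structure of $r$ gives $\Sigma^2\in L^p_x(\RR)$, exactly as you carry out. No further comparison is needed.
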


 \begin{rem}
     Our two results above are stated under the assumption that the continuous path $w^H$ is sampled from a fractional Brownian motion. However, this assumption can easily be generalized to any continuous path with a sufficiently regular local time, see in particular Section \ref{averaging operators} and the results therein. As the fractional Brownian motion is a standard representation of such regularizing paths and a familiar Gaussian process to work with, we choose to do all our analysis here with this process.  
 \end{rem}
 
 \section{Pathwise regularization by noise}\label{averaging operators}
 We provide a brief overview of the concept of averaging operators as introduced in \cite{CATELLIER20162323}, which will be a central object in the remainder of the article. This section is to be accompanied with the concept of local times, which is briefly presented in the appendix \ref{app: local time section}.

While the study of potential regularizing effects of perturbations by stochastic processes has received much attention in the past decade (see e.g. \cite{CATELLIER20162323,galeati2023prevalence,harang2020cinfinity} and several references mentioned in the introduction), we will  for the reader's convenience begin by citing a result on the regularity of averaging operators associated with fractional Brownian motion. This process with its regularizing properties will serve as our main example throughout the remainder of the article, and will be denoted by $w$ throughout.

A particularly interesting relation is that $\mathbb{P}$-almost any realization of the fractional Brownian motion $\{w_t\}_t$ admits a local time $L$ (since $H<1$, and the noise is one dimensional) and we have the relation 
\[
T^{-w}_tf: (t, x)\mapsto \int_0^t f(x-w_s)\dd s=(f*L_t)(x).
\]
For more information about local times and their relation to the averaged field above, see e.g. \cite{harang2020cinfinity}.

The following result represents a slight adaptation of \cite[Theorem 3.1]{harang2020cinfinity} to our purposes taken from \cite[Theorem 3.1]{bechtold}.

\begin{thm}[Regularity of averaging operators]
Let $w$ be $1$-dimensional fractional Brownian motion of Hurst parameter $H\in (0,1)$ on $(\Omega, \mathcal{F}, \mathbb{P})$  and let $p \in [1, \infty)$. Then there exists a $\Omega$-nullset $\mathcal{N}$ such  that for any $\omega\in\mathcal{N}^c$, $w(\omega)$ admits a local time $L$ and for any $(\lambda, \gamma)$ satisfying
\begin{equation}
    \lambda<1/(2H)-1/(p\wedge 2), \qquad \gamma<1-(\lambda+1/2)H
    \label{local time parameters}
\end{equation}
we have $T^{-w}f\in C^\gamma_t C^{\lambda}_x$  provided $f\in L^p_x(\RR)$. Moreover we have for $f_1, f_2\in L^p_x(\RR)$ the stability property
\begin{equation}
    \norm{T^{-w}(f_1-f_2)}_{C^\gamma_t C^{\lambda}_x}\lesssim \norm{f_1-f_2}_{L^p_x(\RR)}.
    \label{stability}
\end{equation}
\label{regularity of averaging operator}
\end{thm}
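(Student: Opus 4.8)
The plan is to reduce the statement to (i) the pathwise regularity of the local time of fractional Brownian motion and (ii) a convolution estimate, by means of the occupation-time identity $T^{-w}_t f = f\ast L_t$ recalled above. For time increments this gives $T^{-w}_t f - T^{-w}_s f = f\ast L_{s,t}$ with $L_{s,t}:=L_t-L_s$, so that it is enough to prove a bound of the form $\norm{f\ast L_{s,t}}_{C^\lambda_x}\lesssim\norm{f}_{L^p_x}\abs{t-s}^\gamma$ uniformly in $s,t$ (with a constant allowed to depend on $\omega$): this yields $T^{-w}f\in C^\gamma_t C^\lambda_x$, and \eqref{stability} then follows at once from linearity of $f\mapsto T^{-w}f$.

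The core --- and the only point where the Gaussian law of $w$ is genuinely used --- is the regularity of $L$ itself, viewed as a path with values in a scale of spatial Besov spaces. Writing $\widehat{L_{s,t}}(\xi)=\int_s^t e^{-i\xi w_r}\dd r$, expanding the $2m$-th moment as a $2m$-fold time integral and using stationarity of increments together with the strong local nondeterminism of fractional Brownian motion, one obtains for every $m\ge 1$ and $\theta\in[0,1]$
\[
\EE\big[\,\abs{\widehat{L_{s,t}}(\xi)}^{2m}\,\big]\lesssim_m \abs{t-s}^{\,m(2-\theta)}(1+\abs{\xi})^{-m\theta/H},
\]
which interpolates the trivial bound $\abs{\widehat{L_{s,t}}(\xi)}\le\abs{t-s}$ with the Gaussian decay $\int e^{-c\xi^2u^{2H}}\dd u\simeq\abs{\xi}^{-1/H}$. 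Integrating this against a Littlewood--Paley decomposition in $\xi$ and using Minkowski's integral inequality gives $\EE[\norm{L_{s,t}}_{\cB}^{2m}]\lesssim\abs{t-s}^{m\kappa}$ for appropriate Besov spaces $\cB$ of spatial smoothness just below the critical exponent fixed by $H$, with $\kappa$ arbitrarily close to $2-\theta$; a Garsia--Rodemich--Rumsey / Kolmogorov argument in time, plus a summation over the dyadic blocks, upgrades this to $L\in C^\gamma_t\cB$ almost surely for $\gamma$ below the corresponding threshold. Finally, intersecting the full-probability events attached to a countable dense family of admissible parameters produces a single $\Omega$-nullset $\cN$ outside of which all of this holds simultaneously for every $(\lambda,\gamma)$ in the open region \eqref{local time parameters}. (As the statement is only a mild adaptation of \cite[Theorem 3.1]{harang2020cinfinity} --- see also \cite{CATELLIER20162323} and \cite[Theorem 3.1]{bechtold} --- this step can in practice simply be quoted.)

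It then remains, for $\omega\in\cN^c$, to transfer this regularity through the convolution with $f\in L^p_x$. Passing to Fourier variables, $\widehat{f\ast L_{s,t}}=\hat f\,\widehat{L_{s,t}}$, and estimating the $L^\infty_x$-norm of each dyadic block of $f\ast L_{s,t}$ by Hölder on the frequency side --- equivalently, via a Young-type convolution inequality, using that $L_{s,t}(\omega)$ is compactly supported in $x$ --- one controls $2^{j\lambda}\norm{\Delta_j(f\ast L_{s,t})}_{L^\infty_x}$ by $\norm{f}_{L^p_x}$ times a weighted frequency norm of $\widehat{L_{s,t}}$ bounded in the previous step. The integrability exponent $p$ can be exploited only up to $p=2$ (Hausdorff--Young for $p\le2$, Plancherel beyond), which is exactly the origin of the truncation $p\wedge 2$ in the admissible range of $\lambda$; optimising the free parameter $\theta$ and keeping track of the leftover time factor then pins the indices to $\lambda<1/(2H)-1/(p\wedge 2)$ and $\gamma<1-(\lambda+1/2)H$. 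Dividing by $\abs{t-s}^\gamma$ and taking the supremum over $s\ne t$ gives $\norm{T^{-w}f}_{C^\gamma_t C^\lambda_x}\lesssim\norm{f}_{L^p_x}$, hence \eqref{stability}.

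The hard part will be the second step: extracting from the Gaussian structure the moment bound with the correct \emph{joint} dependence on $\abs{t-s}$ and $\abs{\xi}$, and then running the two-parameter (time $\times$ frequency) regularity argument so as to land precisely on the exponents of \eqref{local time parameters} --- rather than a slightly worse pair --- while securing a single nullset valid throughout the open admissible region. The convolution step is comparatively soft; its only delicacy is the observation that the integrability of $f$ can be converted into the frequency-side control that matches the local time only up to $p=2$.
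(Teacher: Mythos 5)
The paper offers no proof of this theorem at all—it is quoted verbatim as a slight adaptation of \cite[Theorem 3.1]{harang2020cinfinity} via \cite[Theorem 3.1]{bechtold}—and your sketch correctly reproduces the strategy of those references: the occupation-time identity $T^{-w}_tf=f\ast L_t$, Fourier moment bounds on $\widehat{L_{s,t}}$ via local nondeterminism of fBm, a Kolmogorov/GRR upgrade to pathwise Besov-in-space, H\"older-in-time regularity of $L$ on a single nullset, and a Young/Hausdorff--Young convolution step producing the $p\wedge 2$ truncation. Your closing remark that this step can simply be quoted is exactly what the paper does, so the proposal is consistent with (and more detailed than) the paper's treatment.
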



In the following example we illustrate how this assumption can be used in combination with Theorem \ref{fixing null set} to derive useful regularity bounds that will be applied later.

\begin{ex}
    Let $\Sigma^2=\sum_k\sigma_k^2 \in L^p_x$ and let $w$ be a $(\Omega, \mathcal{F}, \mathbb{P})$ one-dimensional fractional Brownian motion of Hurst parameter $H$, where $p$ and $H$ satisfies Assumption \ref{fixing null set} 
 and there exists a local time for $\Omega$-almost any realization of $w$. By Theorem \ref{regularity of averaging operator}, we can find a $\Omega$-null set $\mathcal{N}$ independent of $\Sigma^2$ such that for any $\omega\in \mathcal{N}^c$ we have for some $\gamma_0, \gamma_1\in (1/2, 1)$ the inequalities
 \begin{equation}
     \norm{(T^{-w}_{s,t}\Sigma^2)}_{C^0_x}\lesssim \norm{\Sigma^2}_{L^p_x} |t-s|^{\gamma_0}, \qquad \norm{(T^{-w}_{s,t}\Sigma^2)}_{C^{1}_x}\lesssim \norm{\Sigma}_{L^p_x}  |t-s|^{\gamma_1}
     \label{regularity available}
 \end{equation}
 hold for any $\Sigma^2\in L^p_x$. Moreover, $\gamma_0, \gamma_1$ satisfy $\gamma_0/2+\gamma_1>1$. Indeed, remark that the condition $2H< (1+\frac{1}{p\wedge 2})^{-1}$ ensures $T^{-w}_{s,t}\Sigma^2\in C^{1}_x$ whereas $2H< (1+\frac{1}{4/3})^{-1}$ ensures that $\gamma_0, \gamma_1$ may be chosen such that $\gamma_0/2+\gamma_1>1$. The maximal $\gamma_0$ we may choose that satisfies these two conditions is 
 \begin{equation}
\gamma_0<1-\left(4+\frac{1}{p/4\wedge 1/3} \right)^{-1}.
     \label{maximal gamma}
 \end{equation}
\end{ex}

\begin{rem}
    In the remainder of the article we will treat $w$ as a fixed realization of a fractional Brownian motion, for which we have an associated local time that is well behaved (according to the above Theorem and assumption). We will therefore not make any probabilistic considerations with respect to the trajectory $w$, and keep all probabilistic estimates related to the infinite dimensional noise $W$, presented in more detail ion the next section. 
\end{rem}

\section{It\^o integrals in the presence of regularizing paths}
\label{integral def}
The following section is essentially an adaption of \cite[Section 3]{bechtold} to our infinite-dimensional setting. 
Before addressing the proof of Theorem \ref{main theorem}, let us first remark that even in the case of a singular diffusion coefficient $\sigma$, with $\Sigma^2\in L^p_x$, it is a-priori unclear why for progressively measurable $u\in C_tL^2_x$ the infinite dimensional stochastic integral 
\[
\int_0^t \sigma(u_r-w_r)\dd W_r
\]
appearing in Definition \ref{notion of solution} should even be a well-defined object. Indeed, as $\sigma$ is neither bounded nor of linear growth, it is at first sight unclear why in the below It\^o type isometry the right hand side should be finite; 
\begin{align*}
    \mathbb{E}\left[ \norm{\int_0^T \sigma(u_r-w_r)\dd W_r}_{L^2_x}^2\right]
    &=\mathbb{E}\left[ \left(\int_0^T\norm{\sigma(u_r-w_r)}_{\cL_2(L^2_x)}^2\dd r\right)\right]\\
    &=\mathbb{E}\left[ \left(\int_0^T\int_{\mathbb{T}}\Sigma^2(u_s-w_s) \dd x \dd r\right)\right]. 
\end{align*}
We remark again here that $w$ is now seen as a realization of a fractional Brownian motion, and thus a deterministic path.  
Already at this stage, we will therefore need to harness the regularizing properties of averaging operators as outlined in Subsection \ref{main idea}. More precisely, let $\sigma_\epsilon$ denote a cut-off mollification of $\sigma$,  by which we mean
\begin{equation}
    (\sigma_\epsilon(x))(\cdot)=\sum_k\sigma_{k, \epsilon}(x)\langle e_k, \cdot\rangle,
    \label{cut-off mollification}
\end{equation}
where $\sigma_{k, \epsilon}=(\sigma_k*\rho^\epsilon)\varphi_\epsilon$ and $\rho^\epsilon$ is a sequence of mollifiers and $\varphi_\epsilon$ is a smooth positive cut-off function. Let us denote $\Sigma^2_\epsilon(x):=\sum_k \sigma_{k, \epsilon}^2(x)$. It can be easily verified that for $\Sigma^2\in L^p$ and each $\epsilon>0$, there exists two constants  constant $c_\epsilon>0$ and $C_\epsilon>0$ such that  $\Sigma_\epsilon^2(x)\lesssim c_\epsilon^2$ for a.a. $x\in \RR$, and 
\begin{equation}
\norm{\sigma_\epsilon(u)}^2_{\cL_2(L^2_x)}\leq c_\epsilon^2, \qquad \norm{\sigma_\epsilon(u)-\sigma_\epsilon(v)}_{\cL_2(L^2_x)}\leq C_\epsilon \norm{u-v}_{L^2_x}.
    \label{mollified diffusion}
\end{equation}
For any $\epsilon>0$, we therefore have
\begin{align*}
    \mathbb{E}\left[ \norm{\int_0^T \sigma_\epsilon(u_r-w_r)\dd W_r}_{L^2_x}^2\right]&=\mathbb{E}\left[ \left(\int_0^T\int_{\mathbb{T}}\Sigma_\epsilon^2(u_s-w_s) \dd x \dd r\right)\right]\lesssim c_\epsilon^2 T,
\end{align*}
meaning that for any progressively measurable $u\in C_tL^2_x$, the stochastic integral is well defined. 
In Lemma \ref{identification} below, we first show that under certain regularity assumptions on $u$, we have the identification
\[
\mathbb{E}\left[ \left(\int_0^T\int_{\mathbb{T}}\Sigma_\epsilon^2(u_s-w_s) \dd x \dd r\right)\right]=\EE[(\mathcal{I}A^\epsilon)_{0,T}],
\]
where $A^\epsilon_{0,T}=\int_{\mathbb{T}}(\Sigma_\epsilon^2*L_{0,T})(u_s)\dd x$, and $\cI A^\epsilon$ denotes the sewing of $A^\epsilon$, and we recall that $L$ denotes the local time associated to $w$.  In a second step (Lemma \ref{extending the integral}), we exploit the gain of regularity due to the local time to show that the above sewing is stable in the limit $\epsilon\to 0$, i.e. $(\mathcal{I}A^\epsilon)\to (\mathcal{I}A)$, where $A=\int_{\mathbb{T}}(\Sigma^2*L_{s,t})(u_s)\dd x$. This allows to conclude that the sequence
\[
\left(
\int_0^t \sigma_\epsilon(u_r-w_r)\dd W_r\right)_\epsilon,
\]
of stochastic integrals is Cauchy, and thus to deduce the existence of a limit we will denote  by $\int_0^t \sigma(u_r-w_r)\dd W_r$. Note moreover that thanks to the approximation procedure employed in the construction of $\int_0^t\sigma(u_r-w_r)\dd W_r$, properties such as adaptedness naturally carry over.

\begin{lem}[Identification]
\label{identification}
Let $w$ be a fractional Brownian motion. Let $\omega\in \Omega$ such that $w(\omega)$ is locally $\alpha$-H\" older continuous for $\alpha<H$. Let $\sigma$ be of the form of \eqref{form of sigma}  and $\sigma_\epsilon$ a corresponding cut-off mollification defined in \eqref{cut-off mollification}. Let $m\geq 2$ and $u$ be a stochastic process satisfying 
\[
\norm{u}_{C^{\gamma_0/2}_tL^m_\omega L^2_x}^m=\sup_{s\neq t\in [0,T]}\frac{\mathbb{E}[\norm{u_{t}-u_s}_{L^2_x}^m]}{|t-s|^{m\gamma_0/2}}<\infty,
\]
for some $\gamma_0>0$. Then the germ
\[
A_{s,t}^\epsilon=\int_s^t \norm{\sigma_\epsilon(u_s-w_r)}_{\cL_2(L^2_x)}^2\dd r=\int_s^t\int_\mathbb{T} \Sigma_\epsilon^2(u_s-w_r)\dd x\dd r=\int_\mathbb{T} (\Sigma_\epsilon^2*L_{s,t})(u_s)\dd x,
\]
admits a sewing $\mathcal{I}A^\epsilon$ in $L^{m/2}(\Omega)$ and we have for any $t\in [0,T]$
\[
 \norm{(\mathcal{I}A^\epsilon)_t-\left(\int_0^t \int_\mathbb{T}\Sigma_\epsilon^2(u_r-w_r)\dd x\dd r\right)}_{L^{m/2}(\Omega)}=0.
\]
Assume moreover the setting of Assumption \ref{fixing null set}. Then we have the bound
\begin{equation}
\label{a priori sewing}
    \norm{\left(\int_0^t \int_\mathbb{T}\Sigma_\epsilon^2(u_r-w_r)\dd x\dd r\right)}_{L^{m/2}(\Omega)}\lesssim \norm{\Sigma_\epsilon^2}_{L^p_x}\|L\|_{C^{\gamma_1}_tW^{1,p'}_x}(1+\norm{u}_{C^{\gamma_0/2}_tL^m_\omega L^2_x}),
\end{equation}
where $p'$ is the Young convolutional conjugate of $p$. 
\end{lem}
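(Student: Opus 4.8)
The statement is a direct application of the sewing lemma in the Banach space $E=L^{m/2}(\Omega)$, following the template of \cite[Section 3]{bechtold} but for $L^2_x$-valued objects. Two preliminary facts are used. First, from \eqref{mollified diffusion} the mollified field $\Sigma^2_\epsilon$ is bounded by $c_\epsilon^2$ and, writing $\Sigma^2_\epsilon(a)-\Sigma^2_\epsilon(b)=\sum_k(\sigma_{k,\epsilon}(a)-\sigma_{k,\epsilon}(b))(\sigma_{k,\epsilon}(a)+\sigma_{k,\epsilon}(b))$ and using Cauchy--Schwarz in $k$, it is Lipschitz with a constant $L_\epsilon\lesssim C_\epsilon c_\epsilon$. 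Second, by the occupation-time formula for the averaging operator (Section~\ref{averaging operators}), the three expressions for $A^\epsilon_{s,t}$ in the statement coincide pointwise in $\omega$; in particular $A^\epsilon_{s,t}\in L^{m/2}(\Omega)$ with $\norm{A^\epsilon_{s,t}}_{L^{m/2}(\Omega)}\le c_\epsilon^2\abs{\TT}\abs{t-s}$, and $(s,t)\mapsto A^\epsilon_{s,t}$ is continuous into $L^{m/2}(\Omega)$ by the H\"older hypothesis on $u$ and the (space-time) continuity of $L$.

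\textbf{The key estimate on $\delta A^\epsilon$.} Since $L_{s,t}-L_{s,u}=L_{u,t}$, one computes
\[
\delta A^\epsilon_{s,u,t}=\int_\TT\Big[(\Sigma^2_\epsilon*L_{u,t})(u_s)-(\Sigma^2_\epsilon*L_{u,t})(u_u)\Big]\dd x .
\]
By Young's convolution inequality applied to the function and its spatial derivative, $\norm{\Sigma^2_\epsilon*L_{u,t}}_{W^{1,\infty}_x}\le\norm{\Sigma^2_\epsilon}_{L^p_x}\norm{L_{u,t}}_{W^{1,p'}_x}\lesssim\norm{\Sigma^2_\epsilon}_{L^p_x}\norm{L}_{C^{\gamma_1}_tW^{1,p'}_x}\abs{t-u}^{\gamma_1}$, so the mean value inequality followed by Cauchy--Schwarz on $\TT$ (finite measure) gives, pointwise in $\omega$, $\abs{\delta A^\epsilon_{s,u,t}}\lesssim\norm{\Sigma^2_\epsilon}_{L^p_x}\norm{L}_{C^{\gamma_1}_tW^{1,p'}_x}\abs{t-u}^{\gamma_1}\norm{u_s-u_u}_{L^2_x}$. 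Taking the $L^{m/2}(\Omega)$ norm, using Jensen to pass from the $m/2$-th to the $m$-th moment, and then the hypothesis on $u$,
\[
\norm{\delta A^\epsilon_{s,u,t}}_{L^{m/2}(\Omega)}\lesssim\norm{\Sigma^2_\epsilon}_{L^p_x}\norm{L}_{C^{\gamma_1}_tW^{1,p'}_x}\norm{u}_{C^{\gamma_0/2}_tL^m_\omega L^2_x}\,\abs{t-s}^{\gamma_1+\gamma_0/2},
\]
and by the Example following Theorem~\ref{regularity of averaging operator} (this is where Assumption~\ref{fixing null set} enters) one has $\gamma_1+\gamma_0/2>1$. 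Together with the preliminary facts this verifies the hypotheses of the sewing lemma in $L^{m/2}(\Omega)$, yielding a unique additive $\cI A^\epsilon$ with $\norm{(\cI A^\epsilon)_{s,t}-A^\epsilon_{s,t}}_{L^{m/2}(\Omega)}\lesssim\norm{\Sigma^2_\epsilon}_{L^p_x}\norm{L}_{C^{\gamma_1}_tW^{1,p'}_x}\norm{u}_{C^{\gamma_0/2}_tL^m_\omega L^2_x}\abs{t-s}^{\gamma_1+\gamma_0/2}$.

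\textbf{Identification and a priori bound.} Set $\widetilde A^\epsilon_{s,t}:=\int_s^t\int_\TT\Sigma^2_\epsilon(u_r-w_r)\dd x\dd r$, which is additive and continuous in $t$. Using the Lipschitz bound on $\Sigma^2_\epsilon$, Minkowski's integral inequality, Jensen, and the H\"older property of $u$,
\[
\norm{\widetilde A^\epsilon_{s,t}-A^\epsilon_{s,t}}_{L^{m/2}(\Omega)}\le L_\epsilon\abs{\TT}^{1/2}\int_s^t\norm{u_r-u_s}_{L^m_\omega L^2_x}\dd r\lesssim L_\epsilon\norm{u}_{C^{\gamma_0/2}_tL^m_\omega L^2_x}\abs{t-s}^{1+\gamma_0/2}=o(\abs{t-s}),
\]
so by the uniqueness part of the sewing lemma $\widetilde A^\epsilon=\cI A^\epsilon$; evaluating at $(0,t)$ gives the identity $\norm{(\cI A^\epsilon)_t-\int_0^t\int_\TT\Sigma^2_\epsilon(u_r-w_r)\dd x\dd r}_{L^{m/2}(\Omega)}=0$. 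For \eqref{a priori sewing}, write $(\cI A^\epsilon)_{0,t}=A^\epsilon_{0,t}+\big((\cI A^\epsilon)_{0,t}-A^\epsilon_{0,t}\big)$; the first term is bounded by $\abs{\TT}\norm{\Sigma^2_\epsilon*L_{0,t}}_{C^0_x}\le\abs{\TT}\norm{\Sigma^2_\epsilon}_{L^p_x}\norm{L_{0,t}}_{L^{p'}_x}\lesssim\norm{\Sigma^2_\epsilon}_{L^p_x}\norm{L}_{C^{\gamma_1}_tW^{1,p'}_x}t^{\gamma_1}$ (Young, and $\norm{\cdot}_{L^{p'}_x}\le\norm{\cdot}_{W^{1,p'}_x}$), the second by the sewing estimate above; since $t\le T$ with $T$ fixed, the factors $t^{\gamma_1}$ and $t^{\gamma_1+\gamma_0/2}$ are bounded and \eqref{a priori sewing} follows.

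\textbf{Main obstacle.} The crux is the estimate on $\delta A^\epsilon$: one must simultaneously use the spatial smoothing created by the local time (the factor $\abs{t-u}^{\gamma_1}$, which needs $\Sigma^2_\epsilon*L_{u,t}\in C^1_x$, hence the $W^{1,p'}$-regularity of $L$) and the temporal H\"older regularity of $u$ in the frozen slot (the factor $\abs{s-u}^{\gamma_0/2}$), and then verify that the sum of exponents exceeds $1$ --- this is exactly the content of the two inequalities in Assumption~\ref{fixing null set}. The remaining steps (Jensen to reconcile the $L^m$-in-$\omega$ assumption on $u$ with the $L^{m/2}$-valued sewing, and bookkeeping of constants to produce precisely the norms $\norm{\Sigma^2_\epsilon}_{L^p_x}$ and $\norm{L}_{C^{\gamma_1}_tW^{1,p'}_x}$) are routine.
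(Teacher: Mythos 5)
Your proof is correct and follows essentially the same route as the paper: a sewing in $E=L^{m/2}(\Omega)$, identification with the additive germ $\widetilde A^\epsilon$ via uniqueness, and the robust bound from the local-time estimates on $\Sigma^2_\epsilon * L_{u,t}$. The one deviation worth flagging is organizational: you establish the \emph{existence} of the sewing through the local-time bound $\norm{\delta A^\epsilon_{s,u,t}}_{L^{m/2}(\Omega)}\lesssim |t-s|^{\gamma_1+\gamma_0/2}$, which requires Assumption \ref{fixing null set} (to get $\gamma_1+\gamma_0/2>1$) and the $W^{1,p'}$-regularity of $L$, whereas the statement asserts the sewing and the identification \emph{without} that assumption; the paper instead first runs the sewing argument with the $\epsilon$-dependent bound $\norm{\delta A^\epsilon_{s,u,t}}_{L^{m/2}(\Omega)}\leq 2c_\epsilon C_\epsilon\norm{u}_{C^{\gamma_0/2}_tL^m_\omega L^2_x}|t-s|^{1+\gamma_0/2}$ (exactly the Lipschitz estimate you already derive and use in your identification step), and only brings in the local time for \eqref{a priori sewing}. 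Reordering your argument accordingly recovers the full stated generality at no extra cost.
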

\begin{proof}
We first verify that the germ $A^\epsilon$ does admit a sewing (remark that we do not require regularization from the local time in this setting as $\sigma_\epsilon$ is assumed to be smooth and bounded). Indeed, from \eqref{mollified diffusion}, using the simple identity $a^2-b^2=(a+b)(a-b)$, we have 
\begin{align*}
    \norm{(\delta A^\epsilon)_{s,u,t}}_{L^{m/2}(\Omega)}&=\norm{\int_u^t\norm{\sigma_\epsilon(u_u-w_r)}_{\cL_2(L^2_x)}^2-\norm{\sigma_\epsilon(u_s-w_r)}_{\cL_2(L^2_x)}^2 \dd r}_{L^{m/2}(\Omega)}\\
    &\leq 2c_\epsilon C_\epsilon\norm{ \norm{u_u-u_s}_{L^2_x} (t-u)}_{L^{m/2}(\Omega)}
    \\
    &\leq 2c_\epsilon C_\epsilon \| u\|_{C^{\gamma_0/2}_t L^{m}_\omega L^2_x}  |t-s|^{1+\gamma_0/2}.
\end{align*}
By application of the Sewing Lemma (e.g. \cite{frizhairer}), it follows that 
\[
\| (\cI A^\epsilon)_{s,t}-A^\epsilon_{s,t}\|\lesssim c_\epsilon C_\epsilon \| u\|_{C^{\gamma_0/2}_t L^{m}_\omega L^2_x}  |t-s|^{1+\gamma_0/2}.
\]
Furthermore, the germ
$
\tilde{A}^\epsilon_{s,t}=\int_s^t \norm{\sigma_\epsilon( u_r-w_r)}_{L^2_x}^2\dd r,
$
trivially admits a sewing as $\delta \tilde{A}^\epsilon=0$ and therefore $(\mathcal{I}\tilde{A}^\epsilon)_{s,t}=\tilde{A}^\epsilon_{s,t}$. We also observe that 
\begin{multline}
    \norm{A^{\epsilon}_{s,t}-\tilde{A}^{\epsilon}_{s,t}}_{L^{m/2}(\Omega)}
    =\norm{\int_s^t\norm{\sigma_\epsilon(u_s-w_r)}_{\cL_2(L^2_x)}^2-\norm{\sigma_\epsilon(u_r-w_r)}_{\cL_2(L^2_x)}^2\dd r}_{L^{m/2}(\Omega)}
    \\
    \leq 2c_\epsilon C_\epsilon \| u\|_{C^{\gamma_0/2}_t L^{m}_\omega L^2_x}  |t-s|^{1+\gamma_0/2}.
\end{multline}
Combining our two estimates from above, this allows us to deduce that 
\begin{align*}
     \norm{(\mathcal{I}A^\epsilon)_{s,t}-\left(\int_s^t \norm{\sigma_\epsilon( u_r-w_r)}_{L^2_x}^2\dd r\right)}_{L^{m/2}(\Omega)}
      &\leq \norm{(\mathcal{I}A^\epsilon)_{s,t}-A^\epsilon_{s,t}}_{L^{m/2}(\Omega)}+\norm{A^\epsilon_{s,t}-\tilde{A}^\epsilon_{s,t}}_{L^{m/2}(\Omega)}
     \\
     & \lesssim c_\epsilon C_\epsilon \| u\|_{C^{\gamma_0/2}_t L^{m}_\omega L^2_x}  |t-s|^{1+\gamma_0/2}.
\end{align*}
Hence, the function
$
t\to \norm{ \int_0^t \norm{\sigma_\epsilon(X_r-w_r)}_{\cL_2(L^2_x)}^2\dd r-(\mathcal{I}A^\epsilon)_{t}}_{L^{m/2}(\Omega)},
$
is a constant, starting in zero, which leads to our conclusion. Towards the second point, note that under the additional Assumption \ref{fixing null set}, we apply the Young convolution inequality in combination with elementary estimates to obtain the alternative bound 

\[
A^\epsilon_{s,t}=\int_s^t\int_\mathbb{T}\Sigma_\epsilon^2(u_s-w_r)\dd x\dd r=\int_\mathbb{T}(\Sigma_\epsilon^2*L_{s,t})(u_s)\dd x\lesssim \norm{\Sigma_\epsilon^2*L_{s,t}(u_s)}_{L^\infty_x}\lesssim \norm{\Sigma_\epsilon^2}_{L^p_x}|t-s|^{\gamma_0},
\]

as well as 
\begin{align*}
    \norm{(\delta A^\epsilon)_{s,u,t}}_{L^{m/2}(\Omega)}&=\norm{\int_\mathbb{T}\left( (\Sigma_\epsilon^2*L_{u,t})(u_s)-(\Sigma_\epsilon^2*L_{u,t})(u_u)\right)\dd x}_{L^{m/2}(\Omega)}\\&\lesssim \norm{\Sigma_\epsilon^2*L_{s,t}}_{C^1_x}\norm{\int_\mathbb{T}|u_s-u_u|\dd x}_{L^{m/2}(\Omega)}\\&\lesssim \norm{\Sigma_\epsilon^2}_{L^p_x}\| L\|_{C^{\gamma_1}W^{1,p'}}|t-s|^{\gamma_1}\norm{u}_{C^{\gamma_0/2}_tL^m_\omega L^2_x}|t-s|^{\gamma_0/2}, 
\end{align*}
where we have used the Jensen's inequality, and $p'$ is the Young conjugate of $p$. 
From this we directly infer \eqref{a priori sewing}.
\end{proof}
In the next Lemma, we show that the robustified a-priori bound \eqref{a priori sewing} can serve to extend the definition of the stochastic integral to singular diffusion coefficients $\sigma$ with the property that $\Sigma^2\in L^p_x$.

\begin{lem}
Let $\sigma$ be of the form \eqref{form of sigma} and $\sigma_\epsilon$ denote a corresponding cut-off mollification given by \eqref{cut-off mollification}. Suppose the setting of Assumption \ref{fixing null set} holds. Let $u$ and $\mathcal{I}A^\epsilon$ be as in Lemma \ref{identification} above and assume additionally that $\gamma_0$ satisfies \eqref{maximal gamma}. Then it holds that 
\begin{equation}
    \mathbb{E}\left[ \norm{\int_0^t \sigma_\epsilon( u_r-w_r)\dd W_r}_{L^2_x}^2\right]=\mathbb{E}\left[ \int_0^t \norm{\sigma_\epsilon(u_r-w_r)}_{\cL_2({L^2_x})}^2\dd r\right]= \norm{(\mathcal{I}A^\epsilon)_t}_{L^1(\Omega)},
    \label{ito iso}
\end{equation}
and for $m\geq 2$ the following version of the Burkholder-Davis-Gundy inequality holds
\begin{equation}
    \mathbb{E}\left[ \sup_{t\in [0,T]}\norm{ \int_0^t \sigma_\epsilon( u_r-w_r)\dd W_r}_{L^2_x}^m\right]\lesssim\norm{(\mathcal{I}A^\epsilon)_T}_{L^{m/2}(\Omega)} .
    \label{BDGII}
\end{equation}
In particular, the sequence $\left(\int_0^t \sigma_\epsilon( u_r-w_r)\dd W_r\right)_\epsilon$ is Cauchy in $L^{m}(\Omega, C([0,T], L^2_x))$, whose limit we denote by 
\[
t\to I_t\sigma(u-w)=\int_0^t \sigma( u_r-w_r)\dd W_r.
\]
By construction, we have the It\^o isometry
\[
 \mathbb{E}\left[ \norm{\int_0^t \sigma( u_r-w_r)\dd W_r}_{L^2_x}^2\right]=\norm{(\mathcal{I}A)_t}_{L^1(\Omega)}, \qquad where \qquad
A_{s,t}:=\int_\mathbb{T}(\Sigma^2*L_{s,t})(u_s)\dd x.
\]
The construction is independent of the chosen cut-off mollification and is adapted to the filtration generated by $(u, W)$.  Moreover the so constructed integral is linear in the sense that for two  functions $\sigma_1, \sigma_2$, we have
\[
I_t(\sigma_1+\sigma_2)(u-w)=I_t\sigma_1(u-w)+I_t\sigma_2(u-w).
\]
Finally, we have the a-priori bound
\[
 \mathbb{E}\left[ \sup_{t\in [0,T]}\norm{\int_0^t \sigma( u_r-w_r)\dd W_r}_{L^2_x}^m\right]\lesssim \norm{\Sigma^2}_{L^p_x}\|L\|_{C^{\gamma_1}_tW^{1,p'}_x}(1+\norm{u}_{L^{\gamma_0/2}_tL^{m/2}_\omega L^2_x}), 
\]
where $p'$ is the Young convolutional conjugate of $p$. 
\label{extending the integral}
\end{lem}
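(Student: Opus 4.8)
The statement is essentially a packaging result, to be obtained by combining the deterministic sewing estimates of Lemma \ref{identification} with the classical theory of Hilbert-space valued stochastic integration against cylindrical Wiener processes (see \cite{da_prato_zabczyk_1992}). The plan has three steps, followed by the passage to the limit.

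\textbf{Step 1: the smooth level.} Fix $\epsilon>0$. By \eqref{mollified diffusion} the process $r\mapsto\sigma_\epsilon(u_r-w_r)$ is progressively measurable with values in $\cL_2(L^2_x)$ and satisfies $\mathbb{E}\int_0^T\norm{\sigma_\epsilon(u_r-w_r)}_{\cL_2(L^2_x)}^2\dd r\leq c_\epsilon^2 T<\infty$, so $t\mapsto\int_0^t\sigma_\epsilon(u_r-w_r)\dd W_r$ is a well-defined continuous square-integrable $L^2_x$-valued martingale. For such integrands the It\^o isometry and the infinite-dimensional Burkholder--Davis--Gundy inequality $\mathbb{E}\sup_{t\in[0,T]}\norm{\int_0^t\sigma_\epsilon(u_r-w_r)\dd W_r}_{L^2_x}^m\lesssim\mathbb{E}\big[\big(\int_0^T\norm{\sigma_\epsilon(u_r-w_r)}_{\cL_2(L^2_x)}^2\dd r\big)^{m/2}\big]$ hold; invoking the identification of Lemma \ref{identification}, which rewrites the time integrals appearing on the right-hand sides as the sewing $(\mathcal{I}A^\epsilon)$, then yields \eqref{ito iso} and \eqref{BDGII}.

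\textbf{Step 2: Cauchy property.} For $\epsilon,\epsilon'>0$, linearity of the It\^o integral for bounded Lipschitz integrands gives $\int_0^t\sigma_\epsilon(u_r-w_r)\dd W_r-\int_0^t\sigma_{\epsilon'}(u_r-w_r)\dd W_r=\int_0^t(\sigma_\epsilon-\sigma_{\epsilon'})(u_r-w_r)\dd W_r$, and $\sigma_\epsilon-\sigma_{\epsilon'}$ is again of the form \eqref{form of sigma}, with bounded smooth coefficients $\sigma_{k,\epsilon}-\sigma_{k,\epsilon'}$ and associated function $\Sigma^2_{\epsilon,\epsilon'}:=\sum_k(\sigma_{k,\epsilon}-\sigma_{k,\epsilon'})^2\in L^p_x$. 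Applying the reasoning of Step 1 (classical BDG together with the robustified bound \eqref{a priori sewing}, whose proof uses nothing about the mollified structure beyond $L^p_x$-membership and thus applies verbatim with $\Sigma^2_{\epsilon,\epsilon'}$ in place of $\Sigma^2_\epsilon$) bounds $\mathbb{E}\sup_{t}\norm{\int_0^t(\sigma_\epsilon-\sigma_{\epsilon'})(u_r-w_r)\dd W_r}_{L^2_x}^m$ by a constant times a positive power of $\norm{\Sigma^2_{\epsilon,\epsilon'}}_{L^p_x}\norm{L}_{C^{\gamma_1}_tW^{1,p'}_x}(1+\norm{u}_{C^{\gamma_0/2}_tL^m_\omega L^2_x})$. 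It then remains to show $\norm{\Sigma^2_{\epsilon,\epsilon'}}_{L^p_x}\to0$: viewing $x\mapsto(\sigma_k(x))_k$ as an $\ell^2$-valued map, the hypothesis $\Sigma^2\in L^p_x$ says precisely that it lies in $L^{2p}_x(\ell^2)$, and since $\sigma_{k,\epsilon}=(\sigma_k*\rho^\epsilon)\varphi_\epsilon$ the triangle inequality in $L^{2p}_x(\ell^2)$ reduces the claim to convergence of the mollification-and-cutoff procedure in $L^{2p}_x(\ell^2)$, which follows from Minkowski's integral inequality and dominated convergence for the cutoff. Hence $\big(t\mapsto\int_0^t\sigma_\epsilon(u_r-w_r)\dd W_r\big)_{\epsilon>0}$ is Cauchy in $L^m(\Omega;C([0,T];L^2_x))$, and we denote its limit by $I\sigma(u-w)$.

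\textbf{Step 3: passage to the limit and remaining properties.} The germ $A_{s,t}=\int_\mathbb{T}(\Sigma^2*L_{s,t})(u_s)\dd x$ admits a sewing by the same Young-convolution argument as in Lemma \ref{identification}, using the condition $\gamma_0/2+\gamma_1>1$ guaranteed by Assumption \ref{fixing null set}; moreover, since $\norm{\Sigma^2_\epsilon-\Sigma^2}_{L^p_x}\to0$ (again the $\ell^2$-valued mollification fact), applying \eqref{a priori sewing} to the germ of $\Sigma^2_\epsilon-\Sigma^2$ gives $(\mathcal{I}A^\epsilon)_t\to(\mathcal{I}A)_t$ in $L^{m/2}(\Omega)$, in particular in $L^1(\Omega)$. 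Letting $\epsilon\to0$ in \eqref{ito iso} (using the $L^2(\Omega)$-convergence of the integrals from Step 2) then yields the It\^o isometry for the limit, and letting $\epsilon\to0$ in \eqref{BDGII}--\eqref{a priori sewing}, together with $\norm{\Sigma^2_\epsilon}_{L^p_x}\leq\norm{\Sigma^2}_{L^p_x}$ (which follows since $0\le\varphi_\epsilon\le1$ and Young's inequality), gives the final a-priori bound. Independence of the cut-off mollification follows from the same difference argument: for two mollifications $\sigma_\epsilon,\tilde\sigma_\epsilon$ one has $\sum_k(\sigma_{k,\epsilon}-\tilde\sigma_{k,\epsilon})^2\to0$ in $L^p_x$, so the corresponding integrals share the same limit; adaptedness and progressive measurability pass from the approximants to $I\sigma(u-w)$ because $L^m(\Omega;C_tL^2_x)$-convergence preserves them; and linearity is inherited from that of the It\^o integral for the bounded Lipschitz $\sigma_{1,\epsilon},\sigma_{2,\epsilon}$ combined with the just-established independence of the mollification.

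\textbf{Main obstacle.} I expect the only genuinely substantive point to be Step 2: recognising that the robustified a-priori bound \eqref{a priori sewing}, stated in Lemma \ref{identification} for the mollified diffusion, actually controls the \emph{difference} of two stochastic integrals (this works because its proof uses solely $L^p_x$-membership of the function convolved with the local time), and verifying the accompanying convergence $\norm{\Sigma^2_{\epsilon,\epsilon'}}_{L^p_x}\to0$, which is the $\ell^2$-valued version of the standard fact that mollification and cutoff converge in $L^q_x$. The remainder is bookkeeping around the infinite-dimensional It\^o calculus and the sewing lemma.
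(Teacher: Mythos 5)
Your proposal is correct and follows essentially the same route as the paper: classical It\^o isometry and BDG at the mollified level combined with the identification of Lemma \ref{identification}, the Cauchy property via the difference germ built from $\Sigma^2_{\epsilon,\epsilon'}=\sum_k(\sigma_{k,\epsilon}-\sigma_{k,\epsilon'})^2$ together with $\norm{\Sigma^2_{\epsilon,\epsilon'}}_{L^p_x}\to 0$, and passage to the limit for the remaining properties. The only cosmetic difference is that you justify $\norm{\Sigma^2_{\epsilon,\epsilon'}}_{L^p_x}\to 0$ by an explicit $\ell^2$-valued Bochner-space mollification argument where the paper simply invokes Vitali's convergence theorem; both are valid.
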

\begin{proof}
The above \eqref{ito iso} and \eqref{BDGII} are immediate consequences of the classical It\^o isometry and Burkholder-Davis-Gundy inequality available in this setting as well as the previous Lemma \ref{identification}. Moreover, for $\epsilon, \epsilon'>0$, we have similarly
\[
\mathbb{E}\left[ \sup_{t\in [0,T]}\norm{ \int_0^t (\sigma_\epsilon( u_r-w_r)-\sigma_{\epsilon'}( u_r-w_r))\dd W_r}_{L^2_x}^m\right]\lesssim\norm{(\mathcal{I}A^{\epsilon, \epsilon'})_T}_{L^{m/2}(\Omega)} 
\]
where $(\mathcal{I}A^{\epsilon, \epsilon'})$ denotes the Sewing of the germ
\[
A^{\epsilon, \epsilon'}_{s,t}=\int_{\mathbb{T}}\sum_k((\sigma_{k, \epsilon}-\sigma_{k, \epsilon'})^2*L_{s,t})(u_s)\dd x
\]
For notational ease, we define $\Sigma^2_{\epsilon, \epsilon'}=\sum_k(\sigma_{k, \epsilon}-\sigma_{k, \epsilon'})^2$. Note that by Vitali's convergence theorem $\norm{\Sigma^2_{\epsilon, \epsilon'}}_{L^p_x}\to 0$.
Recalling the available regularity of the averaging operator in \eqref{regularity available} of Assumption \ref{fixing null set}, we obtain immediately by Young's convolutional inequality 
\begin{align*}
    |A^{\epsilon, \epsilon'}_{s,t}|\lesssim \norm{\Sigma^2_{\epsilon, \epsilon'}*L_{s,t}}_{L^\infty_x}
    \lesssim \norm{\Sigma^2_{\epsilon, \epsilon'}}_{L^p_x}\|L\|_{C^{\gamma_1}_tL^{p'}_x}|t-s|^{\gamma_1},
\end{align*}
where $p'$ is the Young convolution conjugate of $p$. In addition, it is readily seen that 
\begin{align*}
\begin{aligned}
    |(\delta A^{\epsilon, \epsilon'})_{s,u, t}|&\leq \int_{\mathbb{T}}|(T_{u,t}^{-w}\Sigma^2_{\epsilon, \epsilon'})(u_s)-(T_{u,t}^{-w}\Sigma^2_{\epsilon, \epsilon'})(u_u)|\dd x
   \\& \lesssim\norm{(T_{u,t}^{-w}\Sigma^2_{\epsilon, \epsilon'})}_{C^1_x}\norm{u_u-u_s}_{L^2_x}\lesssim \norm{\Sigma^2_{\epsilon, \epsilon'}}_{L^p_x}\|L\|_{C^{\gamma_1}_tW^{1,p'}_x}|t-u|^{\gamma_1}\norm{u_u-u_s}_{L^2_x}.
    \end{aligned}
\end{align*}
 We have by Jensen's inequality that 
\begin{align*}
    \norm{(\delta A^{\epsilon, \epsilon'})_{s,u,t} }_{L^{m/2}(\Omega)}   \lesssim  \norm{(\delta A^{\epsilon, \epsilon'})_{s,u,t} }_{L^{m}(\Omega)} 
    \lesssim \norm{\Sigma^2_{\epsilon, \epsilon'}}_{L^p_x}\|L\|_{C^{\gamma_1}_tW^{1,p'}_x}|t-u|^{\gamma_{1}} |u-s|^{\gamma_0/2}\norm{u}_{C^{\gamma_0/2}_tL^m_\omega L^2_x}
\end{align*}
As by Assumption \ref{fixing null set} $\gamma_{1}+\gamma_0/2>1$, the above shows that $A^{\epsilon, \epsilon'}$ admits a  sewing $\mathcal{I}A^{\epsilon, \epsilon}$ for which we have 
\begin{align*}
    \norm{(\mathcal{I}A^{\epsilon, \epsilon'})_T}_{L^{m/2}(\Omega)}
    &\lesssim_T \norm{A^{\epsilon, \epsilon'}}_{C^{\gamma_0/2}_t L^{m/2}(\Omega)}+\norm{(\delta A^{\epsilon, \epsilon'})}_{C^{\gamma_0/2+\gamma_1}_tL^{m/2}(\Omega)}
    \\
    &\lesssim_T \norm{\Sigma^2_{\epsilon, \epsilon'}}_{L^p_x}\|L\|_{C^{\gamma_1}_tW^{1,p'}_x}\norm{u}_{C^{\gamma_0/2}_tL^m_\omega L^2_x}.
\end{align*}
We conclude that the sequence $\left( \int_0^t \sigma_\epsilon( u_r-w_r)\dd W_r\right)_\epsilon$ is Cauchy in $L^{m}(\Omega, C_t L^2_x)$, allowing to define the corresponding limit as the stochastic integral. Remark moreover that this construction is independent of the sequence of chosen cut-off mollifications, which is immediate by replacing $\sigma_{\epsilon'}$ by $\sigma$ in the above considerations. Adaptedness follows from the fact that the sequence of approximations is adapted by classical It\^o theory. Linearity follows from the fact that cut-off mollifications are linear, i.e. $(\sigma_1+\sigma_2)_\epsilon=\sigma_{1,\epsilon}+\sigma_{2,\epsilon}$ as well as the fact that the classical It\^o integral is linear. The last assertion follows as in Lemma \ref{identification}.
\end{proof}

   \section{Regularity of averaged fields and Volterra sewing}\label{sec:Volterra sewing}
As is common in the study of (semi-linear) SPDEs, we will crucially rely on some space-time regularity trade-offs. In the sequel we let $P:[0,T]\rightarrow \cL(L^2_x)$ denote the heat-semigroup. This operator plays a crucial role, as it allows us to gain spatial regularity at the cost of additional time singularities of Volterra type appearing in the integral. In order to benefit from such regularity trade-offs, we need to understand how singularities of Volterra type can be treated in the robustified sewing setting  we are concerned with here. This is precisely the content of this section. We will mainly invoke tools developed in \cite{Harang_Tindel21} to accommodate the Volterra structure. See also \cite{catellier2021pathwise} where a similar construction has been used in the case of space-(only)-noise. 
 The following lemma is an adaption of the Volterra sewing lemma from \cite{Harang_Tindel21} to the non-linear Young setting and to the specific setting in the current article. 

           \begin{lem}[Non-linear Young-Volterra integral]\label{lem: NLY volterra sewing}
 Let $E$ be a Banach space. Let $\eta\in (0,1)$ and  suppose  $A:[0,T]^2 \rightarrow E$ is such that   for $\gamma>\eta$ and $\alpha,\rho\in(0,1)$ satisfying  $\gamma+\alpha\rho>1$, then we have 
  \begin{equation}
      \begin{aligned}
          \|A\|_{\gamma}&:=\sup_{s,t\in [0,T]}\frac{\|A_{s,t}\|_E}{|t-s|^{\gamma}}<\infty 
          \\
          \|\delta A\|_{\gamma+\rho \alpha} &:=\sup_{s<u<t\in [0,T]}\frac{\|\delta A_{s,u,t}\|_E}{|t-s|^{\gamma+\alpha \rho}}<\infty . 
      \end{aligned}
  \end{equation}
 Then the Volterra Non-linear Young integral defined by 
    \begin{equation}
         \int_0^t (t-r)^{-\eta} A_{\dd r}:= \lim_{|\cP|\rightarrow 0} \sum_{[u,v]\in \cP} (t-u)^{-\eta}A_{u,v}
    \end{equation}
    is an element of $C^{\gamma-\eta-\delta}_t E$ for all $\delta>0 $. Furthermore, for $0<s<t<T$ and $\delta>0 $ the following bound holds: 
    \begin{equation}
    \begin{aligned}
                  \| \int_s^t (t-r)^{-\eta} A_{\dd r}\|_E&\lesssim (t-s)^{\gamma-\eta-\delta} (\|A\|_{\gamma}+\|\delta A\|_{\gamma+\alpha \rho})
    \end{aligned}
    \end{equation}
    Suppose in addition that for any $t\in [0,T]$, the limit $
    \lim_{\epsilon\to 0}\frac{1}{\epsilon}A_{t,t+\epsilon}$
    exists as an element in $E$, in which case we denote it by $(\partial_tA)_t$. If moreover $t\to (\partial_tA)_t$ is continuous we have 
    \[
   \int_0^t(t-r)^{-\eta}A_{\dd r}= \int_0^t(t-r)^{-\eta}(\partial_tA)_r\dd r.
    \]
    \end{lem}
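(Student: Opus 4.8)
The plan is to establish the Volterra non-linear Young integral in three stages: first construct the integral as a limit of Riemann-type sums via a Volterra-adapted sewing argument, then derive the claimed Hölder bound, and finally identify it with a Lebesgue integral when the pointwise derivative $(\partial_t A)_t$ exists and is continuous.

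For the construction, I would follow the Volterra sewing lemma of \cite{Harang_Tindel21} adapted to the non-linear Young setting. Fix $t\in[0,T]$ and define the germ $\Xi^t_{u,v} := (t-u)^{-\eta}A_{u,v}$ for $u<v\le t$. The key algebraic identity is $\delta\Xi^t_{u,r,v} = (t-u)^{-\eta}\delta A_{u,r,v} + \big((t-u)^{-\eta}-(t-r)^{-\eta}\big)A_{r,v}$. The first term is controlled by $\|\delta A\|_{\gamma+\alpha\rho}|v-u|^{\gamma+\alpha\rho}(t-u)^{-\eta}$; the second requires care, since near the singularity $r\uparrow t$ the factor $(t-u)^{-\eta}-(t-r)^{-\eta}$ blows up. Here one uses the elementary estimate $|(t-u)^{-\eta}-(t-r)^{-\eta}|\lesssim |r-u|^{\eta\rho'}(t-v)^{-\eta-\eta\rho'}$ type bounds (or the cruder $\lesssim |r-u|(t-v)^{-\eta-1}$ on a dyadic partition away from $t$), combined with $\|A\|_\gamma|v-r|^\gamma$. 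Summing over a dyadic partition of $[s,t]$ and tracking the interplay between the time-regularity exponent $\gamma$ and the Volterra exponent $\eta$, the $\delta$-increments are summable precisely because $\gamma+\alpha\rho>1$ gives room to absorb the loss of $\delta>0$ coming from the singular weight; this yields both convergence of the Riemann sums and the bound
\[
\Big\|\int_s^t(t-r)^{-\eta}A_{\dd r}\Big\|_E\lesssim (t-s)^{\gamma-\eta-\delta}\big(\|A\|_\gamma+\|\delta A\|_{\gamma+\alpha\rho}\big).
\]
The main obstacle is exactly this: quantifying how much regularity is lost through the Volterra kernel's singularity at $r=t$ and verifying that the surplus $\gamma+\alpha\rho-1>0$ suffices, uniformly, to keep the sewing sums Cauchy — this is the technical heart and is where one must invoke (or reprove) the Volterra sewing estimates of \cite{Harang_Tindel21}.

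For the time-continuity claim $\int_0^\cdot(\cdot-r)^{-\eta}A_{\dd r}\in C^{\gamma-\eta-\delta}_tE$, I would compare the integral evaluated at two times $t_1<t_2$: write $\int_0^{t_2}(t_2-r)^{-\eta}A_{\dd r} - \int_0^{t_1}(t_1-r)^{-\eta}A_{\dd r}$ as $\int_{t_1}^{t_2}(t_2-r)^{-\eta}A_{\dd r} + \int_0^{t_1}\big((t_2-r)^{-\eta}-(t_1-r)^{-\eta}\big)A_{\dd r}$. The first piece is bounded by the displayed estimate applied on $[t_1,t_2]$, giving $(t_2-t_1)^{\gamma-\eta-\delta}$. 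The second piece is handled by a similar (but now non-singular on $[0,t_1]$ if $r<t_1$, with the standard mollification/limiting argument near $r=t_1$) sewing-type bound, using that $(t_2-r)^{-\eta}-(t_1-r)^{-\eta}$ carries a factor $(t_2-t_1)^{\eta\wedge 1}$ times an integrable-in-the-sewing-sense weight; this contributes the same order of Hölder regularity.

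For the final identification, assume $(\partial_tA)_r := \lim_{\epsilon\to0}\epsilon^{-1}A_{r,r+\epsilon}$ exists and $r\mapsto(\partial_tA)_r$ is continuous. Then one checks that the germ $B_{u,v}:=\int_u^v(\partial_tA)_\rho\,\dd\rho$ (a genuine Lebesgue integral) satisfies $\delta B=0$ and $\|A_{u,v}-B_{u,v}\|_E = o(|v-u|)$ as $|v-u|\to0$ uniformly (by continuity of $\partial_t A$ and the mean-value characterization — here one uses that $A_{u,v}-A_{u,r}-A_{r,v}=\delta A_{u,r,v}$ has super-linear decay, forcing $A$ to behave like the integral of its derivative). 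Consequently the Riemann sums $\sum(t-u)^{-\eta}A_{u,v}$ and $\sum(t-u)^{-\eta}B_{u,v} = \sum(t-u)^{-\eta}\int_u^v(\partial_tA)_\rho\,\dd\rho$ have the same limit, and the latter converges by standard Riemann-integration theory to $\int_0^t(t-r)^{-\eta}(\partial_tA)_r\,\dd r$ (the integrand being continuous in $r$ on $[0,t)$ with an integrable singularity since $\eta<1$). This gives the stated equality.
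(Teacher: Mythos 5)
Your proposal is correct and follows essentially the same route as the paper: both form the singular germ $(t-u)^{-\eta}A_{u,v}$, split $\delta$ of that germ into a kernel-increment term times $A$ plus the kernel times $\delta A$, control the kernel increments by the interpolation bound $|(\tau-s)^{-\eta}-(\tau'-s)^{-\eta}|\lesssim|\tau-\tau'|^{\theta}|\tau'-s|^{-\eta-\theta}$ with $\theta=\alpha\rho$, and then hand the summability question to the Volterra sewing lemma of \cite{Harang_Tindel21}, identifying the result with the Lebesgue integral by comparing against the germ $\int_u^v(\partial_tA)_\rho\,\dd\rho$ whose difference from $A_{u,v}$ is $o(|v-u|)$. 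The only (cosmetic) difference is that the paper verifies the full two-parameter hypotheses of \cite[Lem.~22]{Harang_Tindel21} in the upper variable $\tau$, which automatically yields the H\"older continuity in $t$ that you instead recover by hand via the splitting at $t_1<t_2$.
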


 \begin{proof}
    This proof is a simple application of \cite[Lem. 22]{Harang_Tindel21} to our specific non-linear Young integrand $A$. We therefore try to keep the proof short and advice the reader to consult the reference for further details of the lemma. We introduce a new parameter $t\leq \tau\leq T$ and define 
    $$ A^\tau_{s,t}=(\tau-s)^{-\eta} A_{s,t}.
    $$
    According to \cite[Lem. 22]{Harang_Tindel21}, we first need to check that 
    \begin{equation}
    \begin{aligned}
        \|A_{s,t}^\tau\|_E&\lesssim |\tau-t|^{-\eta} |t-s|^{\gamma} \wedge |\tau-s|^{\gamma-\eta} 
        \\
        \| (A^{\tau}-A^{\tau'})_{s,t} \|_E &\lesssim |\tau-\tau'|^\theta |\tau'-t|^{-\eta-\theta}|t-s|^{\gamma}\wedge |\tau-s|^{\gamma -\eta}
\end{aligned}
    \end{equation}
    The first bound follows immediately from definition of $A^\tau_{s,t}=(\tau-s)^{-\eta}A_{s,t}$ and the assumed bound on $A$ together with the fact that $|\tau-s|^{-\eta}\leq |\tau-t|^{-\eta}$ and that $|t-s|^\gamma \leq |\tau-s|^\gamma$. 
  
    For the second estimate, we need to use that for any $\theta\in [0,1]$ we have 
    \begin{equation}\label{eq:singular diff}
        |(\tau-s)^{-\gamma}-(\tau'-s)^{-\gamma}|\lesssim |\tau-\tau'|^{\theta}|\tau'-s|^{-\eta-\theta},
    \end{equation}
and the estimate follows in the same way as for the first. 
Next we need to check that for $s\leq u \leq t $ there exists $\beta>1$ and $\kappa\in (0,1)$ such that for any $\theta\in [0,1]$ we also have 
        \begin{equation}\label{eq:delta cond}
    \begin{aligned}
        \|\delta A_{s,u,t}^\tau\|_E&\lesssim |\tau-t|^{-\kappa} |t-s|^{\beta} \wedge |\tau-s|^{\beta-\kappa} 
        \\
        \|\delta (A^{\tau}-A^{\tau'})_{s,u,t} \|_E &\lesssim |\tau-\tau'|^\theta |\tau'-t|^{-\kappa-\theta}|t-s|^{\beta}\wedge |\tau-s|^{\beta -\kappa}
\end{aligned}
    \end{equation}
These estimates will again follow by some simple bounds related to the singularity $(\tau-s)^{-\eta}$ together with the assumed bound on $\delta A_{s,u,t}$. More precisely, it is readily checked that 
\begin{equation*}
    \delta A_{s,u,t}^{\tau} = [(\tau-s)^{-\eta}-(\tau-u)^{-\eta}]A_{u,t}+(\tau-s)^{-\eta} \delta A_{s,u,t}. 
\end{equation*}
We will bound the two terms on the right hand side separately, and begin with the first term. There we use \eqref{eq:singular diff}  to see that 
\begin{equation*}
    \|[(\tau-s)^{-\eta}-(\tau-u)^{-\eta}]A_{u,t}\|_E\lesssim |u-s|^\theta |\tau-u|^{-\eta-\theta} \|A\|_\gamma |t-u|^\gamma. 
\end{equation*}
while for the second term we have 
\begin{equation*}
    \|(\tau-s)^{-\eta} \delta A_{s,u,t}\|_E\lesssim |\tau-s|^{-\eta}|t-s|^{\gamma+\rho\alpha}. 
\end{equation*}
Therefore, choosing $\theta = \rho \alpha$ we see that 
\begin{equation*}
   \|  \delta A_{s,u,t}^{\tau}\|_E \lesssim_T |\tau-u|^{-\eta-\rho\alpha}|t-s|^{\gamma+\rho\alpha}\wedge |\tau-s|^{\gamma-\eta}. 
\end{equation*}
Thus, for $\beta=\gamma+\rho\alpha>1$ and $\kappa=\eta+\rho\alpha$ we see that the first bound in \eqref{eq:delta cond} is satisfied. Furthermore, it is clear that $\beta-\kappa = \gamma-\eta$. 

For the second inequality in \eqref{eq:delta cond} we apply the very similar techniques, but now invoking the following bound for the singular kernel with $s\leq u \leq  t\leq \tau'\leq \tau$ we have for any $\theta,\zeta \in [0,1]$
\begin{equation*}
    |(\tau-t)^{-\eta}-(\tau'-t)^{-\eta}-(\tau-s)^{-\eta}+(\tau'-s)^{-\eta}|\lesssim |\tau-\tau'|^\theta |\tau'-t|^{-\eta-\theta-\zeta} |t-s|^{\zeta}. 
\end{equation*}
With this inequality, using again that 
\begin{multline*}
     \delta (A^{\tau}-A^{\tau'} )_{s,u,t}
     \\
     = [(\tau'-s)^{-\eta}-(\tau-s)^{-\eta}-(\tau'-u)^{-\eta}-(\tau-u)^{-\eta}]A_{u,t}+[(\tau-s)^{-\eta}-(\tau'-s)^{-\eta}] \delta A_{s,u,t}. 
\end{multline*}
we conclude by following the exact same steps as for the first bound in \eqref{eq:delta cond}. 
We conclude that the conditions in \eqref{eq:delta cond} holds with $\beta=\gamma+\alpha \rho$ and $\kappa = \eta+\alpha \rho$, and it follows by \cite[Lem. 22]{Harang_Tindel21} in combination with \cite[Rem. 19]{Harang_Tindel21} that $\int_0^t (t-r)^{-\eta} A_{\dd r}$ exists as an element of $C^{\gamma-\eta-\delta}_t E$ for any small $\delta>0$. Finally, it identification of the Volterra sewing with a Volterra integral in the case of a differentiable germ $A$ is obtained analogously to Lemma \ref{identification}.
    \end{proof}

\begin{cor}
\label{corrected coro}
Let $\sigma$ be of the form \eqref{form of sigma} and $\sigma_\epsilon$ denote a corresponding cut-off mollification given by \eqref{cut-off mollification}. Suppose the setting of Assumption \ref{fixing null set} holds. Let $u$ and $\mathcal{I}A^\epsilon$ be as in Lemma \ref{identification} above and assume additionally that $\gamma_0$ satisfies \eqref{maximal gamma}.  Then for any $\gamma_0,\eta\geq 0$ satisfying  $\gamma_0-\eta>\delta>0$ for some $\delta $ the following inequality holds: 
\begin{equation}\label{eq:bounds for integral}
\begin{aligned}
  \norm{\int_s^t (t-r)^{-\eta} \int_{\TT} \Sigma_\epsilon^2(u_r-w_r) \dd x\dd r}_{L^{m/2}(\Omega)}& \lesssim (t-s)^{\gamma_0-\eta-\delta}\norm{\Sigma_\epsilon^2}_{L^p_x} (1+\norm{u}_{C^{\gamma_0/2}_tL^m_\omega L^2_x}),
\end{aligned}
\end{equation}
\end{cor}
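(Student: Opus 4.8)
The plan is a direct application of the Volterra non-linear Young sewing Lemma \ref{lem: NLY volterra sewing} in the Banach space $E = L^{m/2}(\Omega)$, fed with the germ estimates already obtained in Lemma \ref{identification}. Precisely, I apply Lemma \ref{lem: NLY volterra sewing} with the given $\eta$, with $\gamma = \gamma_0$ (admissible since $\gamma_0 > \eta$ by the hypothesis $\gamma_0 - \eta > \delta > 0$), and with the germ $A^\epsilon_{s,t} = \int_\TT(\Sigma_\epsilon^2 * L_{s,t})(u_s)\dd x$. This directly produces the Volterra non-linear Young integral $\int_s^t (t-r)^{-\eta}A^\epsilon_{\dd r}\in L^{m/2}(\Omega)$ together with the bound $\norm{\int_s^t (t-r)^{-\eta}A^\epsilon_{\dd r}}_{L^{m/2}(\Omega)} \lesssim (t-s)^{\gamma_0 - \eta - \delta}\big(\|A^\epsilon\|_{\gamma_0} + \|\delta A^\epsilon\|_{\gamma_0 + \alpha\rho}\big)$, and it then remains only to (i) verify the hypotheses and estimate the right-hand side, and (ii) identify this integral with the iterated Lebesgue integral in \eqref{eq:bounds for integral}.

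For (i): the estimates established in the proof of Lemma \ref{identification} give $\|A^\epsilon\|_{\gamma_0} = \sup_{s\neq t}\frac{\norm{A^\epsilon_{s,t}}_{L^{m/2}(\Omega)}}{|t-s|^{\gamma_0}} \lesssim \norm{\Sigma_\epsilon^2}_{L^p_x}$ (in fact this bound is $\omega$-pointwise, via Young's convolution inequality) and $\norm{(\delta A^\epsilon)_{s,u,t}}_{L^{m/2}(\Omega)} \lesssim \norm{\Sigma_\epsilon^2}_{L^p_x}\|L\|_{C^{\gamma_1}_t W^{1,p'}_x}\norm{u}_{C^{\gamma_0/2}_t L^m_\omega L^2_x}\,|t-s|^{\gamma_1 + \gamma_0/2}$. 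Since $\gamma_0/2 < 1/2 < \gamma_1$, we may write $\gamma_1 - \gamma_0/2 = \alpha\rho$ with $\alpha,\rho \in (0,1)$; then $\|\delta A^\epsilon\|_{\gamma_0 + \alpha\rho} \lesssim \norm{\Sigma_\epsilon^2}_{L^p_x}\|L\|_{C^{\gamma_1}_t W^{1,p'}_x}\norm{u}_{C^{\gamma_0/2}_t L^m_\omega L^2_x}$, and the structural requirement $\gamma_0 + \alpha\rho = \gamma_0/2 + \gamma_1 > 1$ of Lemma \ref{lem: NLY volterra sewing} is exactly the inequality secured by Assumption \ref{fixing null set} and already exploited in the Example leading to \eqref{maximal gamma}. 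Inserting these two bounds into the Volterra sewing estimate and absorbing the deterministic constant $\|L\|_{C^{\gamma_1}_t W^{1,p'}_x}$ (recall $w$ is a fixed realization) into $\lesssim$ yields the right-hand side of \eqref{eq:bounds for integral}.

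For (ii), I invoke the final assertion of Lemma \ref{lem: NLY volterra sewing}: the germ $A^\epsilon$ admits the derivative $(\partial_t A^\epsilon)_r = \lim_{\epsilon'\to 0}\frac{1}{\epsilon'}\int_r^{r+\epsilon'}\int_\TT \Sigma_\epsilon^2(u_r - w_{r'})\dd x\,\dd r' = \int_\TT \Sigma_\epsilon^2(u_r - w_r)\dd x$, and $r \mapsto \int_\TT \Sigma_\epsilon^2(u_r - w_r)\dd x$ is continuous because $\Sigma_\epsilon^2$ is bounded and continuous, $w$ is continuous, and $u \in C_t L^2_x$; hence $\int_s^t (t-r)^{-\eta}A^\epsilon_{\dd r} = \int_s^t (t-r)^{-\eta}\int_\TT \Sigma_\epsilon^2(u_r - w_r)\dd x\,\dd r$, which combined with (i) is precisely \eqref{eq:bounds for integral}. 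Beyond Lemmas \ref{identification} and \ref{lem: NLY volterra sewing} the argument is pure bookkeeping; the one point genuinely worth checking is the exponent compatibility — that one can indeed pick $(\alpha,\rho) \in (0,1)^2$ with $\gamma_0 + \alpha\rho > 1$ while matching the Hölder exponent of $\delta A^\epsilon$, which is where the second half of \eqref{condition in H} enters.
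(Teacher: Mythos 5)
Your proposal is correct and follows essentially the same route as the paper: apply the Volterra sewing Lemma \ref{lem: NLY volterra sewing} in $E=L^{m/2}(\Omega)$ to the germ $A^\epsilon$, using the local bound $\|A^\epsilon_{s,t}\|\lesssim \|\Sigma^2_\epsilon\|_{L^p_x}|t-s|^{\gamma_0}$ and the $\delta A^\epsilon$ bound of exponent $\gamma_0/2+\gamma_1>1$ from Lemma \ref{identification}, and then identify the Volterra sewing with the Lebesgue integral via $(\partial_t A^\epsilon)_r=\int_\TT\Sigma^2_\epsilon(u_r-w_r)\dd x$. Your explicit bookkeeping of $\alpha\rho=\gamma_1-\gamma_0/2$ and of the local exponent $\gamma_0$ (rather than $\gamma_0/2$) is in fact the consistent reading that yields the stated rate $(t-s)^{\gamma_0-\eta-\delta}$.
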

\begin{proof}
As observed in the proof of Lemma \ref{identification} already, the germ
   \begin{equation*}
       A_{s,t}^\epsilon  := \int_s^t \int_{\TT} \Sigma_\epsilon^2 (u_s-w_r)\dd x\dd r,
   \end{equation*}
admits a sewing in $E=L^{m/2}(\Omega)$ and moreover  
\[
\norm{A^\epsilon}_{\gamma_0/2}+\norm{\delta A^\epsilon}_{\gamma_0/2+\gamma_1}\lesssim \norm{\Sigma_\epsilon^2}_{L^p_x}\|L\|_{C^{\gamma_1}_tW^{1,p'}_x}(1+\norm{u}_{C^{\gamma_0/2}_tL^m_\omega L^2_x}).
\]
It then follows directly by Lemma \ref{lem: NLY volterra sewing} that the corresponding non-linear Volterra integral exists and enjoys the bound\[
\norm{\int_s^t(t-r)^{-\eta}A_{\dd r}^\epsilon}_{L^{m/2}(\Omega)}\lesssim (t-s)^{\gamma_0-\eta-\delta}\norm{\Sigma_\epsilon^2}_{L^p_x}\|L\|_{C^{\gamma_1}_tW^{1,p'}_x}(1+\norm{u}_{C^{\gamma_0/2}_tL^m_\omega L^2_x})
\]
Moreover, since $\Sigma^2_\epsilon$ is smooth,
\[
(\partial_tA^\epsilon)_t=\int_\mathbb{T}\Sigma^2_\epsilon(u_t-w_t)\dd x
\]
and therefore, by the last part of Lemma \ref{lem: NLY volterra sewing}
\begin{align*}
    \norm{\int_s^t (t-r)^{-\eta} \int_{\TT} \Sigma_\epsilon^2(u_r-w_r) \dd x\dd r}_{L^{m/2}(\Omega)}&=\norm{\int_s^t (t-r)^{-\eta} A_{\dd r}^\epsilon}_{L^{m/2}(\Omega;\RR)}
\end{align*}
concluding the claim. 

\end{proof}

 \section{Tightness}\label{sec:apriori and tightness}

     Combing the stochastic integral considerations of the two previous sections, we are now in place to establish a-priori bounds for solutions to the mollified problem. More precisely, let $\sigma$ be of the form \eqref{form of sigma} such that $\Sigma^2\in L^p_x$ and let $\sigma_\epsilon$ be a cut-off mollification as in \eqref{cut-off mollification}. Since $\sigma_\epsilon$ satisfies \eqref{mollified diffusion} for any $\epsilon>0$ fixed, the equation
     \begin{equation}
u^\epsilon_t=P_tu_0+\int_0^tP_{t-s}\sigma_\epsilon(u^\epsilon_s-w_s)\dd W_s, \qquad u_0\in H^{1/2}_x
         \label{mollified problem}
     \end{equation}
    classically admits a unique solution $u^\epsilon \in C^{1/2-\delta}_tL^m_\omega L^2_x$ for any $\delta>0$ and $m\in [1, \infty)$ (refer for example to \cite{cf:DpZ}). 

Towards the aim of proving existence of the limiting solution to \eqref{mollified problem}, we must establish two distinct a-priori bounds that will play central parts in the subsequent application of the Aubin-Lions lemma. These two bounds follow in the two next subsections.  
    
    \subsection{A first a priori bound}\label{sec:first apriori}
    Using the machinery developed in the previous sections, we first establish an a-priori bound for $u^\epsilon$ in $C^{\gamma_0/2}_tL^m_\omega L^2_x$ uniformly in $\epsilon>0$. 

 \begin{lem}
 \label{first a priori}
Let $\sigma$ be of the form \eqref{form of sigma} such that $\Sigma^2\in L^p_x$ and $\sigma_\epsilon$ denote a corresponding cut-off mollification given by \eqref{cut-off mollification}. Suppose the setting of Assumption \ref{fixing null set} holds. Let $u^\epsilon$ denote the unique solution to \eqref{mollified problem} with $u_0\in H^{\gamma_0/2}_x$. Then for $\gamma_0$ satisfying \eqref{maximal gamma}, we have for all $m\geq 2$
 \begin{equation*}
       \norm{u^\epsilon}_{C^{\gamma_0/2}_tL^m_\omega L^2_x}^m\lesssim \norm{u_0}^m_{H^{\gamma_0/2}_x}+\norm{\Sigma^2}_{L^p_x}^m.
 \end{equation*}
 \end{lem}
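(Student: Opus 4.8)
The strategy is to apply the mild-form representation \eqref{mollified problem} and estimate the two terms separately on the H\"older scale $C^{\gamma_0/2}_tL^m_\omega L^2_x$. For the linear term $t\mapsto P_tu_0$, classical analytic-semigroup estimates give $\norm{P_tu_0-P_su_0}_{L^2_x}\lesssim |t-s|^{\gamma_0/2}\norm{u_0}_{H^{\gamma_0/2}_x}$, which is deterministic and hence trivially controlled in $L^m_\omega$. The work is entirely in the stochastic convolution term $\Phi^\epsilon_t:=\int_0^tP_{t-s}\sigma_\epsilon(u^\epsilon_s-w_s)\dd W_s$. First I would write the increment $\Phi^\epsilon_t-\Phi^\epsilon_s$ as the sum of the ``new mass'' part $\int_s^tP_{t-r}\sigma_\epsilon(u^\epsilon_r-w_r)\dd W_r$ and the ``old mass, moved'' part $(P_{t-s}-\mathrm{Id})\int_0^sP_{s-r}\sigma_\epsilon(u^\epsilon_r-w_r)\dd W_r$, and treat each one via a factorization/stochastic-Fubini argument together with the Burkholder–Davis–Gundy inequality from Lemma \ref{extending the integral}.

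\textbf{Key steps.} (i) Via the factorization method (Da Prato–Kwapie\'n–Zabczyk), represent $\Phi^\epsilon$ through an auxiliary process $Y^\epsilon_r=\int_0^r(r-\tau)^{-\eta}P_{r-\tau}\sigma_\epsilon(u^\epsilon_\tau-w_\tau)\dd W_\tau$ for a suitable $\eta\in(0,1)$ with $\gamma_0/2<1/2-\eta$; the smoothing $\norm{P_t}_{\cL(L^2_x,H^{\gamma_0/2})}\lesssim t^{-\gamma_0/4}$ then transfers spatial regularity at the price of an integrable time singularity. (ii) Apply BDG in the form \eqref{BDGII} to $Y^\epsilon_r$, which reduces the second moment of the stochastic integral to $\mathbb{E}[(\mathcal{I}A^\epsilon)_r]$-type quantities; crucially, by Corollary \ref{corrected coro} applied with the Volterra exponent coming from $(r-\tau)^{-\eta}P_{r-\tau}$, these are bounded by $(t-s)^{\gamma_0-\eta-\delta}\norm{\Sigma^2_\epsilon}_{L^p_x}(1+\norm{u^\epsilon}_{C^{\gamma_0/2}_tL^m_\omega L^2_x})$, uniformly in $\epsilon$ since $\norm{\Sigma^2_\epsilon}_{L^p_x}\lesssim\norm{\Sigma^2}_{L^p_x}$. (iii) Assemble the pieces to obtain an estimate of the form
\[
\norm{u^\epsilon}_{C^{\gamma_0/2}_tL^m_\omega L^2_x}\lesssim \norm{u_0}_{H^{\gamma_0/2}_x}+\norm{\Sigma^2}_{L^p_x}^{1/2}\bigl(1+\norm{u^\epsilon}_{C^{\gamma_0/2}_tL^m_\omega L^2_x}\bigr)^{1/2},
\]
and close the bound by Young's inequality (absorbing the superlinear-looking term, which is in fact sub-linear in $\norm{u^\epsilon}$ because of the square root), raising to the $m$-th power at the end. (iv) Since the constants in Corollary \ref{corrected coro} are genuinely independent of $\epsilon$, the resulting bound is uniform in $\epsilon$, as claimed.

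\textbf{Main obstacle.} The delicate point is the interplay of the two singular kernels: the Volterra heat kernel $(r-\tau)^{-\eta}P_{r-\tau}$ needed for the factorization, and the Volterra singularity $(t-r)^{-\eta}$ already built into Corollary \ref{corrected coro}, together with the requirement that the combined regularity exponents satisfy the sub-criticality constraint $\gamma_0/2+\gamma_1>1$ from Assumption \ref{fixing null set} while still leaving enough room for the spatial smoothing exponent $\gamma_0/4$. One must check that the parameter window for $\eta$ is non-empty — i.e. that $\eta$ can be chosen with $\gamma_0/4<1/2-\eta$ on one side (so that $Y^\epsilon\in L^m_\omega L^2_x$ after the $(r-\tau)^{-\eta}$ is reintegrated) and $\gamma_0-\eta-\delta\geq \gamma_0/2$ on the other (so that the Volterra-sewing bound delivers the correct H\"older exponent $\gamma_0/2$). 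This is exactly where \eqref{maximal gamma} and condition \eqref{condition in H} are consumed, and verifying it carefully is the crux of the argument; once the parameters are pinned down, the rest is a routine fixed-point-style absorption.
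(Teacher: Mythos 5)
Your overall skeleton is the paper's: mild form, Schauder estimate for $(P_t-P_s)u_0$, splitting the stochastic convolution increment into the ``new mass'' piece $\int_s^tP_{t-r}\sigma_\epsilon(u^\epsilon_r-w_r)\dd W_r$ and the ``old mass, moved'' piece, reduction via BDG to moments of $\int\!\!\int_{\TT}\Sigma^2_\epsilon(u^\epsilon_r-w_r)\dd x\dd r$, control of the latter by the sewing bound \eqref{a priori sewing} and Corollary \ref{corrected coro}, and a final absorption of the sub-linear term. All of that is right and is exactly how the paper closes the estimate.

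The gap is in your treatment of the old-mass term via the factorization method. For the norm $C^{\gamma_0/2}_tL^m_\omega L^2_x$ the supremum over $(s,t)$ sits \emph{outside} the expectation, so no factorization is needed: the paper simply writes $(P_{t-s}-\mathrm{Id})\int_0^sP_{s-r}\cdots\dd W_r=\int_0^s(P_{t-r}-P_{s-r})\cdots\dd W_r$, applies BDG, and uses the Schauder bound $\norm{P_{t-r}-P_{s-r}}_{\cL(L^2_x)}^2\lesssim(t-s)^{\gamma_0}(s-r)^{-\gamma_0}$ (Lemma \ref{schauder} with $\rho=0$), which produces precisely the Volterra integral $\int_0^s(s-r)^{-\gamma_0}\int_{\TT}\Sigma^2_\epsilon\dd x\dd r$ that Corollary \ref{corrected coro} controls, the singularity $\gamma_0$ being compensated by the germ's local regularity $\gamma_0+\delta$. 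Your detour through $Y^\epsilon$ and spatial smoothing does not close: $\norm{P_t}_{\cL(L^2_x,H^{\gamma_0/2}_x)}\lesssim t^{-\gamma_0/4}$ paired with $\norm{(P_{t-s}-\mathrm{Id})v}_{L^2_x}\lesssim(t-s)^{\gamma_0/4}\norm{v}_{H^{\gamma_0/2}_x}$ only yields H\"older exponent $\gamma_0/4$, half of what is claimed; to recover $\gamma_0/2$ you would need $H^{\gamma_0}_x$ regularity of the convolution, i.e.\ the second a-priori bound, which in this paper costs $u_0\in H^{\gamma_0}_x$ and $m$ sufficiently large, whereas the lemma is asserted for all $m\geq 2$ and $u_0\in H^{\gamma_0/2}_x$. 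Moreover the window you flag for $\eta$ (time-regularity gain $\eta>\gamma_0/2+1/m$ versus integrability of $(r-\tau)^{-2\eta}$ against a germ of regularity $\gamma_0+\delta$, forcing $\eta<\gamma_0/2+\delta/2$) is empty unless $m$ is large. Replacing step (i) by the direct semigroup-difference estimate removes the obstacle entirely and the rest of your argument goes through.
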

 \begin{proof}
As $u^\epsilon$ solves \eqref{mollified problem}, we write the equation on its mild form and obtain 
 \begin{equation}\label{eq:mild form proof}
 u^\epsilon_t- u^\epsilon_s=(P_t-P_s)u_0+\int_s^t P_{t-r}\sigma^\epsilon( u^\epsilon_r+w_r)\xi \dd r+\int_0^s(P_{t-s}-1)P_{s-r}\sigma^\epsilon( u^\epsilon_r+w_r)\xi_r\dd r.
 \end{equation}
 Concerning the first term involving the initial condition, we have by Lemma \ref{schauder} in the appendix that 
  \begin{align*}
        \norm{(P_t-P_s) u_0}_{L^2_x}\lesssim \norm{(-\Delta)^{-\gamma_0/2}(P_t-P_s)}_{\cL(L^2_x)}\norm{u_0}_{H^{\gamma_0/2}_x}\lesssim (t-s)^{\gamma_0/2} \norm{u_0}_{H^{\gamma_0/2}_x}.
    \end{align*}
 Concerning the middle term of \eqref{eq:mild form proof}, we have by It\^o's isometry in the Hilbert space $L^2_x$
 \begin{equation}
     \begin{split}
         \mathbb{E}\norm{\int_s^t P_{t-r}\sigma^\epsilon(u^\epsilon_r-w_r)\dd W_r}_{L^2_x}^m&=\mathbb{E}\left[\left(\int_s^t\norm{P_{t-r} \sigma^\epsilon(u^\epsilon_r-w_r)}^2_{\cL_2( L^2_x)}\dd r\right)^{m/2}\right]\\
         &\leq 
    \mathbb{E}\left[\left(\int_s^t\norm{ \sigma^\epsilon(u^\epsilon_r-w_r)}^2_{\cL_2(L^2_x)}\dd r\right)^{m/2}\right]\\
    &=\mathbb{E}\left[\left(\int_s^t\int_\mathbb{T}\Sigma^2_\epsilon(u^\epsilon_r-w_r)\dd x\dd r\right)^{m/2}\right].
    \end{split}
    \label{regularity trade off}
 \end{equation}
Concerning the last term of \eqref{eq:mild form proof}, we proceed similarly using additionally  the Schauder estimate of Lemma \ref{schauder} in the Appendix,  we have 
\begin{equation}
    \begin{split}
    &\mathbb{E}\norm{\int_0^s(P_{t-r}-P_{s-r})\sigma^\epsilon(u^\epsilon_r-w_r)\dd W_r}^m_{L^2_x}\\
    &\lesssim \mathbb{E}\left[\left(\int_0^s \norm{(P_{t-r}-P_{s-r})}_{\cL(L^2_x)}^2\norm{\sigma^\epsilon(u^\epsilon_r-w_r)}_{\cL_2(L^2_x)}^2\dd s\right)^{m/2}\right]\\ 
    &\lesssim (t-s)^{\gamma_0 m/2}\mathbb{E}\left[\left(\int_0^s (s-r)^{-\gamma_0}\norm{\sigma^\epsilon(u^\epsilon_r-w_r)}_{\cL_2(L^2_x)}^2\dd s\right)^{m/2}\right]\\
    &=(t-s)^{\gamma_0 m/2}\mathbb{E}\left[\left(\int_0^s (s-r)^{-\gamma_0}\int_\mathbb{T}\Sigma^2_\epsilon(u^\epsilon_r-w_r)\dd x\dd s\right)^{m/2}\right].
        \label{regularity trade off II}
    \end{split}
\end{equation}
Combining our findings above shows that
\begin{equation}
\label{intermediate step}
     \begin{split}
          \mathbb{E}\norm{u^\epsilon_t-u^\epsilon_s}_{L^2_x}^m
        &\lesssim (t-s)^{\gamma_0 m/2}\norm{u_0}_{H^{\gamma_0/2}_x}^m+\mathbb{E}\left[\left(\int_s^t\int_{\mathbb{T}} \Sigma_\epsilon^2(u^\epsilon_r-w_r)\dd x\dd r\right)^{m/2} \right]\\
        &+(t-s)^{\gamma_0 m/2}\mathbb{E}\left[ \left(\int_0^s (s-r)^{-\gamma_0} \int_{\mathbb{T}} \Sigma_\epsilon^2(u^\epsilon_r-w_r)\dd x\dd r\right)^{m/2}\right]. \
     \end{split}
 \end{equation}
 Using the a-prioi bound \eqref{a priori sewing} obtained in the end of Lemma \ref{identification} we have
 \begin{align*}
 \mathbb{E}\left[\left(\int_s^t\int_{\mathbb{T}} \Sigma_\epsilon^2(u^\epsilon_r-w_r)\dd x\dd r\right)^{m/2} \right] &\lesssim |t-s|^{\gamma_0 m/2} \norm{\Sigma_\epsilon^2}_{L^p_x}^{m/2}(1+\norm{u^\epsilon}_{C^{\gamma_0/2}_tL^{m}_\omega L^2_x}^{m/2}).
 \end{align*}
  Moreover, using Corollary \ref{corrected coro}, we have that
  \begin{align*}
  \mathbb{E}\left[\left(\int_0^s(s-r)^{-\gamma_0} \int_{\mathbb{T}} \Sigma_\epsilon^2(u^\epsilon_r-w_r)\dd x\dd r\right)^{p/2} \right] &\lesssim \norm{\Sigma_\epsilon^2}_{L^p_x}^{m/2}\|L\|_{C^{\gamma_1}_tW^{1,p'}_x}(1+\norm{u^\epsilon}_{C^{\gamma_0/2}_tL^{m}_\omega L^2_x}^{m/2})
 \end{align*}
More precisely, we are allowed to use Corollary \ref{corrected coro} by exploiting that thanks to the strict inequality on $H$ in \eqref{condition in H} of Assumption \ref{fixing null set}, we also have 
\[
\norm{\Sigma^2_\epsilon\ast L_{s,t}}_{L^\infty_x}\lesssim \norm{\Sigma_\epsilon^2}_{L^p_x}|t-s|^{\gamma_0+\delta},
\]
for some small $\delta$, meaning the germ
\[
A^\epsilon_{s,t}=\int_s^t \int_\mathbb{T}\Sigma_\epsilon^2(u^\epsilon_s-w_r)\dd x\dd r,
\]
in Corollary \ref{corrected coro} actually enjoys local regularity $\gamma_0+\delta$, and thus enough to compensate the Volterra singularity of order $\gamma_0$ as demanded in Corollary \ref{corrected coro}.
Going back to \eqref{intermediate step}, we can therefore conclude that 
\begin{align*}
    \mathbb{E}\norm{u^\epsilon_t-u^\epsilon_s}^m_{L^2_x}\lesssim (t-s)^{\gamma_0 m/2}\left(\norm{u_0}^{m}_{H^{\gamma_0/2}_x}+\norm{\Sigma^2_\epsilon}_{L^p_x}^{m/2}(1+\norm{u^\epsilon}^{m/2}_{C^{\gamma_0/2}_t L^m_\omega L^2_x}) \right)
\end{align*}
which in particular implies that 
\begin{align*}
   \norm{u^\epsilon}^{m}_{C^{\gamma_0/2}_t L^m_\omega L^2_x}&\lesssim \norm{u_0}^{m}_{H^{\gamma_0/2}_x}+\norm{\Sigma^2_\epsilon}_{L^p_x}^{m/2}(1+\norm{u^\epsilon}^{m/2}_{C^{\gamma_0/2}_t L^m_\omega L^2_x}) \\
   &\lesssim 
  \norm{u_0}^{m}_{H^{\gamma_0/2}_x}+\norm{\Sigma^2}_{L^p_x}^{m/2}(1+\norm{u^\epsilon}^{m/2}_{C^{\gamma_0/2}_t L^m_\omega L^2_x}).
\end{align*}
where in the last step, we used lower semi continuity of the norm and the fact that $\Sigma^2_\epsilon\to \Sigma^2$ by Vitali's convergence theorem. The claim now follows directly. 
 \end{proof}
 \begin{rem}\label{rem:pathwise cont of first bound}
     Since Lemma \ref{first a priori} holds for any $m\geq 2$, we can combine it with the Kolmogorov continuity theorem to conclude that also $u^\epsilon$ is also pathwise continuous, taking values in $L^2_x$. Furthermore, we have  that  $\|u^\epsilon\|_{L_\omega^m C_t^{\gamma_0/2} L^2_x}<\infty$.
 \end{rem}

    \subsection{A second a-priori bound}
    In this section, we set out to establish a-priori bounds of the form 
    \[
    \mathbb{E}\norm{u^\epsilon}_{L^\infty_tH^{\gamma_0}_x}^m<\infty
    \]
    for $m$ sufficiently large. Towards this end, we exploit modified bounds on the stochastic convolution based on the estimates derived in \cite[Proposition 7.9]{cf:DpZ} 

    \begin{lem}
    \label{maximal inequality 2}
    Let $W$ be a cylindrical Wiener process on $L^2_x$, and let $H^\lambda_x$ denote the $L^2_x$ based fractional Sobolev spaces of regularity $\lambda$. Consider the stochastic convolution 
    \[
    W^\phi_t:=\int_0^tP_{t-s}\phi(s)\dd W_s, 
    \]
    for some $\phi\in C([0,T];\cL_2(L^2_x))$. 
    Then for $m\geq 2$ and $1/m<\alpha<1$ it holds that for any  $\lambda<1-(2\alpha)$  we have
    \begin{equation}\label{eq:lemma ineq second}
    \begin{split}
         \mathbb{E}\sup_{t\in [0, T]}\norm{W^\phi_t}^{m}_{H^\lambda_x}\leq c_{r, T}\int_0^T\mathbb{E}\left(\int_0^s(s-r)^{-2\alpha- \lambda}\norm{\phi(r)}^{2}_{\cL_2( L^2_x)}\dd r\right)^{m/2}\dd s.
     \end{split}
    \end{equation}
    \end{lem}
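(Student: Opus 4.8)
The plan is to use the factorization method of Da Prato and Zabczyk, following \cite[Proposition 7.9]{cf:DpZ}, adapted here to the Bessel--potential scale $H^\lambda_x$. Fix $\alpha\in(1/m,1)$ and $\lambda<1-2\alpha$ as in the statement, set $c_\alpha:=\sin(\pi\alpha)/\pi$, and introduce the auxiliary process
\[
Y_s:=\int_0^s (s-r)^{-\alpha}P_{s-r}\phi(r)\dd W_r,\qquad s\in[0,T].
\]

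\textbf{Step 1 (moment bounds for $Y$).} First I would check that $Y_s$ is a well-defined $H^\lambda_x$-valued random variable with finite $m$-th moment. Expressing the $H^\lambda_x$-norm through the relevant fractional power of the Laplacian, one has $\norm{P_\tau\psi}_{\cL_2(L^2_x,H^\lambda_x)}=\norm{(-\Delta)^{\lambda/2}P_\tau\psi}_{\cL_2(L^2_x)}\leq\norm{(-\Delta)^{\lambda/2}P_\tau}_{\cL(L^2_x)}\norm{\psi}_{\cL_2(L^2_x)}\lesssim\tau^{-\lambda/2}\norm{\psi}_{\cL_2(L^2_x)}$ by the smoothing bound of Lemma \ref{schauder}. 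Combining the It\^o isometry in the Hilbert space $H^\lambda_x$ with the Burkholder--Davis--Gundy inequality for $H^\lambda_x$-valued martingales then gives
\[
\mathbb{E}\norm{Y_s}_{H^\lambda_x}^m\lesssim\mathbb{E}\left(\int_0^s (s-r)^{-2\alpha-\lambda}\norm{\phi(r)}_{\cL_2(L^2_x)}^2\dd r\right)^{m/2}.
\]
Since $2\alpha+\lambda<1$ and $\phi\in C([0,T];\cL_2(L^2_x))$ is bounded, the inner integral is finite uniformly in $s\leq T$, so $Y\in L^m(\Omega;L^m([0,T];H^\lambda_x))$; moreover, integrating the displayed bound over $s\in[0,T]$ reproduces, up to a constant, the right-hand side of \eqref{eq:lemma ineq second}.

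\textbf{Step 2 (factorization and a deterministic convolution bound).} Using the semigroup property $P_{t-r}=P_{t-s}P_{s-r}$, the Beta identity $\int_r^t(t-s)^{\alpha-1}(s-r)^{-\alpha}\dd s=c_\alpha^{-1}$, and a stochastic Fubini theorem (legitimate by the integrability established in Step 1), I would derive the factorization formula, valid $\mathbb{P}$-a.s.\ for every $t\in[0,T]$,
\[
W^\phi_t=c_\alpha\int_0^t (t-s)^{\alpha-1}P_{t-s}Y_s\dd s.
\]
Since $P_{t-s}$ is a contraction on $H^\lambda_x$, H\"older's inequality in $s$ with conjugate exponents $m$ and $q:=m/(m-1)$ yields, for every $t\in[0,T]$,
\[
\norm{W^\phi_t}_{H^\lambda_x}\lesssim\left(\int_0^t (t-s)^{(\alpha-1)q}\dd s\right)^{1/q}\left(\int_0^T\norm{Y_s}_{H^\lambda_x}^m\dd s\right)^{1/m}\lesssim_T\left(\int_0^T\norm{Y_s}_{H^\lambda_x}^m\dd s\right)^{1/m},
\]
the first factor being finite precisely because $(\alpha-1)q>-1\iff\alpha>1/m$. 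Taking the supremum over $t\in[0,T]$ (legitimate by continuity of $t\mapsto W^\phi_t$), raising to the power $m$, taking expectations and invoking the bound of Step 1 gives \eqref{eq:lemma ineq second}.

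\textbf{Main obstacle.} No individual estimate is hard; the delicate point is that the exponent $\alpha$ is used in two opposing ways — it must exceed $1/m$ for the deterministic convolution in Step 2 to be integrable, and it must satisfy $2\alpha<1-\lambda$ for $Y_s$ to be an honest $H^\lambda_x$-valued It\^o integral in Step 1 — and the hypotheses $1/m<\alpha$ and $\lambda<1-2\alpha$ are exactly what reconciles these. Beyond that, the only technical care needed is the justification of the stochastic Fubini step and the progressive measurability of $(s,r,\omega)\mapsto(s-r)^{-\alpha}P_{s-r}\phi(r)$, both standard and following from continuity of $\phi$ together with the moment bound of Step 1; the remainder is a transcription of the classical argument in \cite{cf:DpZ}.
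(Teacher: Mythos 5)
Your proposal is correct and follows essentially the same route as the paper: the Da Prato--Zabczyk factorization through $Y_s=\int_0^s(s-r)^{-\alpha}P_{s-r}\phi(r)\dd W_r$, H\"older's inequality with conjugate exponents $m$ and $m/(m-1)$ using $\alpha>1/m$, and the BDG/It\^o bound for $Y$ combined with the heat-semigroup smoothing estimate $\norm{P_\tau}_{\cL(L^2_x,H^\lambda_x)}\lesssim\tau^{-\lambda/2}$, which is exactly the paper's argument. Your identification of the two competing roles of $\alpha$ matches the parameter discussion in the paper's proof.
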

  \begin{rem}
     The gain of up to almost one derivative in space (expressed by the condition $\lambda<1$ obtained asymptotically for $m\to \infty$ ) is analogous to the stochastic convolution estimate obtained in \cite[Theorem 2.1]{Bechtold2021}, see also \cite{GERASIMOVICS2021109200}. 
     \end{rem}

    \begin{proof}
The proof follows along the lines of \cite[Theorem 2.1]{Bechtold2021}, but  for the sake of completeness let us sketch the proof to the above claim and explain the parameter regimes we obtain in our setting. The proof follows the classical factorization Lemma. Let  $r\leq s\leq t$ and $ 0<\alpha<1$. Starting from the identity 
    \begin{equation*}
    \int_{r}^t(t-s)^{\alpha-1}(s-r)^{-\alpha}\dd s=\frac{\pi}{\sin{\pi \alpha}},
\end{equation*}
we obtain, thanks to Fubini, the factorization 
\begin{equation*}
    W^\phi_t=\frac{\sin{\pi \alpha}}{\pi}\int_0^t(t-s)^{\alpha-1}P_{t-s}Y_s\dd s, \qquad \mathrm{ with} \qquad 
    Y_s=\int_0^s (s-r)^{-\alpha}P_{s-r}\phi(r)\dd W_r.
\end{equation*}
 Applying H\"older's inequality, we therefore have 
\begin{align}\label{eq:sup W}
    \sup_{t\in [0,T]}\norm{W^\phi_t}_{H^\lambda_x}^{m}\leq \sup_{t\in [0,T]} \left(\int_0^t(t-s)^{(\alpha-1)\frac{m}{m-1}}\dd s\right)^{1-1/m} \int_0^t \norm{Y_s}^{2r}_{H^\lambda_x}\lesssim \int_0^T \norm{Y_s}^{m}_{H^\lambda_x}\dd s,
\end{align}
where we exploited $m>1/\alpha$ in the last inequality. Concerning the remaining expression, we can make use of It\^o's isometry and the Schauder estimate \ref{schauder} to obtain
\begin{equation}
\begin{aligned}\label{eq:int Y}
    \mathbb{E}\int_0^T \norm{Y_s}^{m}_{H^\lambda_x}\dd s&= \mathbb{E}\int_0^T\norm{\int_0^s(s-r)^{-\alpha}P_{s-r}\phi(r)\dd W_r}_{H^\lambda_x}^{m}\dd s\\
    &\leq \mathbb{E}\int_0^T\left(\int_0^s (s-r)^{-2\alpha}\norm{P_{s-r}}_{\cL(L^2_x, H^\lambda_x)}^2\norm{\phi(r)}_{\cL_2(L^2_x)}^2\dd r\right)^{m/2}\dd s\\
    &\lesssim \int_0^T \mathbb{E}\left(\int_0^s (s-r)^{-2\alpha-\lambda}\norm{\phi(r)}_{\cL_2(L^2_x)}^2\dd r\right)^{m/2}\dd s.
\end{aligned}
\end{equation}
Combining the two estimates in \eqref{eq:sup W} and \eqref{eq:int Y} allows us to conclude that \eqref{eq:lemma ineq second} holds. 
    \end{proof}
    
  Invoking this Lemma, the a-priori bound is obtained in a straightforward fashion: Replace again the right hand side Lebesgue integral by a stochastic sewing that is robust and will be uniformly bounded thanks to the a-priori estimate obtained in Section  \ref{sec:first apriori}.

\begin{prop}
Let $\sigma$ be of the form \eqref{form of sigma} such that $\Sigma^2\in L^p_x$ and $\sigma_\epsilon$ denote a corresponding cut-off mollification given by \eqref{cut-off mollification}. Suppose the setting of Assumption \ref{fixing null set} holds. Let $u^\epsilon$ denote the unique solution to \eqref{mollified problem} with $u_0\in H^{\gamma_0}_x$. Then for $\gamma_0$ satisfying \eqref{maximal gamma}, we have for $m$ sufficiently large 
\begin{equation}
    \norm{u^\epsilon}_{L^m_\omega C_t H^{\gamma_0}_x}^m\lesssim \norm{u_0}_{H^{\gamma_0}_x}^m+\norm{\Sigma^2}^m_{L^p_x}.
\end{equation}
\end{prop}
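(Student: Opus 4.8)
The plan is to mimic the proof of Lemma \ref{first a priori}, but now measuring the mild solution in the stronger norm $L^m_\omega C_t H^{\gamma_0}_x$ and invoking the sharpened stochastic convolution estimate of Lemma \ref{maximal inequality 2} in place of the elementary It\^o isometry. Writing \eqref{mollified problem} in mild form, we have $u^\epsilon_t = P_t u_0 + W^{\phi^\epsilon}_t$ with $\phi^\epsilon(s)=\sigma_\epsilon(u^\epsilon_s-w_s)$. The deterministic term is immediately controlled, since $P$ is analytic and $\norm{P_t u_0}_{H^{\gamma_0}_x}\lesssim \norm{u_0}_{H^{\gamma_0}_x}$ uniformly in $t\in[0,T]$ by Lemma \ref{schauder}. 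For the stochastic convolution, I would apply Lemma \ref{maximal inequality 2} with $\lambda = \gamma_0$; this requires choosing $\alpha$ with $1/m < \alpha < 1$ and $\gamma_0 < 1-2\alpha$, i.e. $\alpha < (1-\gamma_0)/2$, which is compatible with $\alpha>1/m$ precisely when $m$ is taken large enough (this is the role of the phrase ``for $m$ sufficiently large''). Note $(1-\gamma_0)/2>0$ since $\gamma_0<1$, and by Assumption \ref{fixing null set} with \eqref{maximal gamma} one has $\gamma_0<1-(4+\tfrac{1}{p/4\wedge 1/3})^{-1}$, leaving room to pick such $\alpha$.

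Then Lemma \ref{maximal inequality 2} gives
\[
\mathbb{E}\sup_{t\in[0,T]}\norm{W^{\phi^\epsilon}_t}^m_{H^{\gamma_0}_x}\lesssim \int_0^T \mathbb{E}\left(\int_0^s (s-r)^{-2\alpha-\gamma_0}\norm{\sigma_\epsilon(u^\epsilon_r-w_r)}^2_{\cL_2(L^2_x)}\dd r\right)^{m/2}\dd s.
\]
By \eqref{control of operator norm}, $\norm{\sigma_\epsilon(u^\epsilon_r-w_r)}^2_{\cL_2(L^2_x)}=\int_{\TT}\Sigma^2_\epsilon(u^\epsilon_r-w_r)\dd x$, so the inner expression is exactly the Volterra-type integral $\int_0^s (s-r)^{-\eta}A^\epsilon_{\dd r}$ with $\eta = 2\alpha+\gamma_0$ and germ $A^\epsilon_{s,t}=\int_s^t\int_\TT \Sigma^2_\epsilon(u^\epsilon_s-w_r)\dd x\dd r$, to which Corollary \ref{corrected coro} applies provided $\gamma_0-\eta>0$. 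Here one must again invoke, as in the proof of Lemma \ref{first a priori}, that the strict inequality on $H$ in \eqref{condition in H} buys an extra $\delta$ of regularity for the germ, namely $\norm{\Sigma^2_\epsilon\ast L_{s,t}}_{L^\infty_x}\lesssim \norm{\Sigma^2_\epsilon}_{L^p_x}|t-s|^{\gamma_0+\delta}$, so that the germ has local regularity $\gamma_0+\delta$ which compensates the Volterra singularity $\eta=2\alpha+\gamma_0$ as long as $2\alpha<\delta$ — tightening the lower bound required on $m$. Applying Corollary \ref{corrected coro} yields
\[
\mathbb{E}\left(\int_0^s (s-r)^{-\eta}A^\epsilon_{\dd r}\right)^{m/2}\lesssim s^{(\gamma_0-\eta-\delta')m/2}\norm{\Sigma^2_\epsilon}^{m/2}_{L^p_x}\|L\|^{m/2}_{C^{\gamma_1}_tW^{1,p'}_x}(1+\norm{u^\epsilon}^{m/2}_{C^{\gamma_0/2}_tL^m_\omega L^2_x}),
\]
and integrating in $s\in[0,T]$ keeps the right side finite since the exponent of $s$ is $>-1$.

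Combining the two contributions gives
\[
\norm{u^\epsilon}^m_{L^m_\omega C_t H^{\gamma_0}_x}\lesssim \norm{u_0}^m_{H^{\gamma_0}_x}+\norm{\Sigma^2_\epsilon}^{m/2}_{L^p_x}(1+\norm{u^\epsilon}^{m/2}_{C^{\gamma_0/2}_tL^m_\omega L^2_x}),
\]
and the crucial point is that the right-hand side does \emph{not} contain the $H^{\gamma_0}_x$-norm of $u^\epsilon$, only the weaker $C^{\gamma_0/2}_t L^2_x$-norm, which has already been bounded uniformly in $\epsilon$ by Lemma \ref{first a priori} (note $u_0\in H^{\gamma_0}_x\hookrightarrow H^{\gamma_0/2}_x$). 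Hence no absorption argument is needed; one simply substitutes the bound from Lemma \ref{first a priori}, and then passes $\Sigma^2_\epsilon\to\Sigma^2$ in $L^p_x$ via Vitali as in Lemma \ref{first a priori} together with lower semicontinuity of the norm, obtaining the claimed estimate. The main obstacle — and the only genuinely delicate bookkeeping — is tracking the admissible parameter window: one needs $m$ large enough that there exists $\alpha\in(1/m, (1-\gamma_0)/2)$ with additionally $2\alpha$ smaller than the regularity surplus $\delta$ afforded by the strict inequality in Assumption \ref{fixing null set}, so that the Volterra singularity of order $\eta=2\alpha+\gamma_0$ is strictly dominated by the germ regularity $\gamma_0+\delta$; verifying that \eqref{condition in H} indeed leaves such a window open is where care is required.
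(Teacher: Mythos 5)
Your proposal is correct and follows essentially the same route as the paper: mild formulation, the factorization estimate of Lemma \ref{maximal inequality 2} with $\lambda=\gamma_0$, Corollary \ref{corrected coro} applied to the germ with the extra $\delta$ of time regularity coming from the strict inequality in Assumption \ref{fixing null set}, and finally the first a priori bound plus Vitali. The only cosmetic difference is the bookkeeping of the smallness condition on $\alpha$ (the paper requires $3\alpha<\delta$, absorbing the sewing loss $\delta'$ into a third $\alpha$, while you keep $2\alpha<\delta$ and a separate $\delta'$), which is immaterial.
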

\begin{proof}
    As $u^\epsilon$ solves the stochastic  integral equation 
    \[
u^\epsilon_t=P_tu_0+\int_0^tP_{t-s}\sigma_\epsilon(u^\epsilon_s-w_s)\dd W_s,
    \]
    we can apply Lemma \ref{maximal inequality 2} to obtain that for $\alpha\in (1/m, 1)$ the following bound holds
    \begin{align*}
        \mathbb{E}\sup_{t\in [0, T]}\norm{\int_0^tP_{t-s}\sigma_\epsilon(u^\epsilon_s-w_s)\dd W_s}^{m}_{H^{\gamma_0}_x}&\lesssim \int_0^T\mathbb{E}\left(\int_0^s (s-r)^{-2\alpha-\gamma_0}\int_{\mathbb{T}}\Sigma_\epsilon^2(u_r-w_r)\dd x\dd r\right)^{m/2}\dd s.
    \end{align*}
Similar to the proof of Lemma \ref{first a priori}, note that since the condition on $H$ in Assumption \ref{fixing null set} is strict, we obtain 
\[
\norm{T^{-w}\Sigma^2_\epsilon}_{L^\infty_x}\lesssim \norm{\Sigma_\epsilon^2}_{L^p_x}|t-s|^{\gamma_0+\delta},
\]
for some small $\delta$. This means in particular that the germ
\[
A^\epsilon_{s,t}=\int_s^t \int_\mathbb{T}\Sigma_\epsilon^2(u^\epsilon_s-w_r)\dd x\dd r,
\]
in Corollary \ref{corrected coro} actually enjoys local regularity of order $\gamma_0+\delta$. Provided $m$ is chosen sufficiently large such that we can choose $\alpha$ so small that $3\alpha<\delta$, this is enough to compensate the Volterra singularity of order $\gamma_0$ as demanded in Corollary \ref{corrected coro}. Overall, using Corollary \ref{corrected coro} as well as Lemma \ref{first a priori}, this allows to obtain the bound
\begin{align*}
     \mathbb{E}\sup_{t\in [0, T]}\norm{\int_0^tP_{t-s}\sigma_\epsilon(u^\epsilon_s-w_s)\dd W_s}^{m}_{H^{\gamma_0}_x}&\lesssim \norm{\Sigma^2_\epsilon}^{m/2}_{L^p_x}(1+\norm{u^\epsilon}_{C^{\gamma_0/2}_tL^m_\omega L^2_x}^{m/2})\int_0^Ts^{(\delta-3\alpha)m/2}\dd s\\
     &\lesssim_T \norm{u_0}^m_{H^{\gamma_0/2}_x}+\norm{\Sigma^2}^m_{L^p_x}.
\end{align*}
Concerning the initial condition, we have moreover 
    \begin{align*}
        \mathbb{E}\sup_{t\in [0, T]}\norm{P_tu_0}_{H^{\gamma_0}_x}^{m}\leq \norm{u_0}_{H^{\gamma_0}_x}^{m}
    \end{align*}
    which yields the claim. 
\end{proof}

    \subsection{Extraction of convergent subsequence}
    Summarizing the results of the two previous subsections, we have for sufficiently large $ m$  and $\gamma_0$ satisfying \ref{maximal gamma} that
    \begin{equation}
        \norm{u^\epsilon}_{C^{\gamma_0/2}_tL^m_\omega  L^2_x}^m+\norm{u^\epsilon}_{L^m_\omega  C^{\gamma_0/2}_tL^2_x}^m+\norm{u^\epsilon}_{L^m_\omega  L^\infty_tH^{\gamma_0}_x}^m\lesssim \norm{\Sigma^2}_{L^p_x}^m+\norm{u_0}^m_{H^{\gamma_0}_x}.
        \label{a priori for vitali}
    \end{equation}
    Recall that by the Aubin-Lions Lemma (refer for example to \cite[Proposition 5]{simon}), we have that the embedding 
    \[
    C^{\gamma_0/2}_tL^2_x\cap L^\infty_tH^{\gamma_0}_x \hookrightarrow C_tL^2_x,
    \]
    is compact. We therefore conclude that the sequence $(u^\epsilon)_\epsilon$ is tight in $C_tL^2_x$, meaning that by Prokhorov and Skorokhod we may conclude the following:
    \begin{cor}[Extraction of a convergent subsequence]
\label{extracted subsequence}
 Let $\sigma$ be of the form \eqref{form of sigma} such that $\Sigma^2\in L^p_x$ and $\sigma_\epsilon$ denote a corresponding cut-off mollification given by \eqref{cut-off mollification}. Suppose the setting of Assumption \ref{fixing null set}. Let $u^\epsilon$ denote the unique solution to \eqref{mollified problem} with $u_0\in H^{\gamma_0}_x$. There exists a probabilistic basis $(\bar{\Omega}, \bar{\mathcal{F}}, \bar{\mathbb{P}})$, processes $(\bar{u^\epsilon}, \bar{W^\epsilon})$ on the said basis whose laws coincide with those of $(u^\epsilon, W^\epsilon)$, and processes $(\bar{u}, \bar{W})$  such that 
\[
(\bar{u^\epsilon}, \bar{W^\epsilon})\to (\bar{u}, \bar{W}),
\]
$\bar{\mathbb{P}}$-almost surely in $C_t L^2_x\times C_t H^{-1-}_x$ along a subsequence $(\bar{u^{\epsilon_n}}, \bar{W^{\epsilon_n}})_n$ which we will in the following denote $(\bar{u^{\epsilon}}, \bar{W^{\epsilon}})_\epsilon$,  again by slight abuse of notation. Moreover, $\bar{W}$ and $\bar{W}^\epsilon$ are $(\bar{\Omega}, \bar{\mathcal{F}}, \Bar{\mathbb{P}})$-Brownian motions. Denote by $(\bar{\mathcal{F}}_t)_t$ the augmentation of the filtration generated by $(\bar{u}, \bar{W})$. For weak-* lower semi-continuity of norms, we have 
\begin{equation}
  \norm{\bar{u}}_{C^{\gamma_0/2}_tL^m_\omega L^2_x}^m+\norm{\bar{u}}_{L^m_\omega  C^{\gamma_0/2}_tL^2_x}^m+\norm{\bar{u}}_{L^m_\omega L^\infty_tH^{\gamma_0}_x}^m\lesssim \norm{\Sigma^2}_{L^p_x}^m+\norm{u_0}^m_{H^{\gamma_0}_x}.
    \label{a priori bounds limit}
\end{equation}

\end{cor}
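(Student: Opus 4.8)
The plan is to run the classical stochastic--compactness scheme the statement points to: deduce tightness of the laws of $(u^\epsilon)_\epsilon$ from the uniform bound \eqref{a priori for vitali} together with the Aubin--Lions compact embedding recalled just above, apply Prokhorov and Skorokhod to pass to an a.s.\ convergent subsequence on a new probabilistic basis, identify the limiting noise as a cylindrical Brownian motion for the generated filtration, and finally transfer the a priori bounds to the limit by lower semicontinuity. For the tightness, fix $m$ large enough that \eqref{a priori for vitali} holds and, for $R>0$, let $K_R$ be the closed ball of radius $R$ in $C^{\gamma_0/2}_tL^2_x\cap L^\infty_tH^{\gamma_0}_x$; by the compactness of the embedding $C^{\gamma_0/2}_tL^2_x\cap L^\infty_tH^{\gamma_0}_x\hookrightarrow C_tL^2_x$ (\cite[Prop.~5]{simon}), $K_R$ is compact in $C_tL^2_x$, and Chebyshev's inequality together with \eqref{a priori for vitali} gives $\PP(u^\epsilon\notin K_R)\le R^{-m}C$ with $C=C(\norm{\Sigma^2}_{L^p_x},\norm{u_0}_{H^{\gamma_0}_x})$ independent of $\epsilon$; hence $(\mathrm{Law}(u^\epsilon))_\epsilon$ is tight on $C_tL^2_x$. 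Each $W^\epsilon$ is a cylindrical Wiener process on $L^2_x$, so admits a continuous modification with values in $H^{-1-}_x$ whose law on $C_tH^{-1-}_x$ is one fixed centred Gaussian measure, which is tight as a single Borel measure on a Polish space; therefore the joint laws $(\mathrm{Law}(u^\epsilon,W^\epsilon))_\epsilon$ are tight on the Polish space $C_tL^2_x\times C_tH^{-1-}_x$.

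By Prokhorov there is a subsequence, still labelled $(u^\epsilon,W^\epsilon)_\epsilon$, along which $\mathrm{Law}(u^\epsilon,W^\epsilon)$ converges weakly, and the Skorokhod representation theorem produces a probability space $(\bar\Omega,\bar\cF,\bar\PP)$ carrying random variables $(\bar u^\epsilon,\bar W^\epsilon)$ and $(\bar u,\bar W)$ with $\mathrm{Law}(\bar u^\epsilon,\bar W^\epsilon)=\mathrm{Law}(u^\epsilon,W^\epsilon)$ and $(\bar u^\epsilon,\bar W^\epsilon)\to(\bar u,\bar W)$ $\bar\PP$-a.s.\ in $C_tL^2_x\times C_tH^{-1-}_x$; in particular each $\bar W^\epsilon$ is a cylindrical Brownian motion, having the law of $W^\epsilon$. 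Let $(\bar\cF_t)_t$ denote the $\bar\PP$-augmentation of the filtration generated by $(\bar u,\bar W)$, so that $\bar u$ is progressively measurable and, being continuous with values in $L^2_x$, adapted. The law of $\bar W$ is the weak limit of the identical centred Gaussian laws of the $\bar W^\epsilon$, hence that same Gaussian measure, so $\bar W$ is a cylindrical Wiener process; that it is an $(\bar\cF_t)_t$-cylindrical Brownian motion follows from the martingale/L\'evy characterisation, by passing to the limit, for $0\le s\le t$, $h\in L^2_x$ and bounded continuous $\Phi$ depending only on $(\bar u,\bar W)|_{[0,s]}$, in the identities $\bar\EE\big[\la\bar W^\epsilon_t-\bar W^\epsilon_s,h\ra\,\Phi\big]=0$ and $\bar\EE\big[\big(\la\bar W^\epsilon_t,h\ra^2-\la\bar W^\epsilon_s,h\ra^2-(t-s)\norm{h}_{L^2_x}^2\big)\Phi\big]=0$, the limits being justified by the a.s.\ convergence above and the uniform integrability provided by Gaussianity.

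For the bounds on the limit, $\mathrm{Law}(\bar u^\epsilon)=\mathrm{Law}(u^\epsilon)$ makes \eqref{a priori for vitali} hold with $u^\epsilon$ replaced by $\bar u^\epsilon$. For fixed $s,t$, a.s.\ convergence in $C_tL^2_x$ gives $\bar u^\epsilon_t-\bar u^\epsilon_s\to\bar u_t-\bar u_s$ in $L^2_x$, so by Fatou $\bar\EE\norm{\bar u_t-\bar u_s}_{L^2_x}^m\le\liminf_\epsilon\bar\EE\norm{\bar u^\epsilon_t-\bar u^\epsilon_s}_{L^2_x}^m$; taking the supremum over $s\neq t$ bounds $\norm{\bar u}_{C^{\gamma_0/2}_tL^m_\omega L^2_x}$, and combining pathwise lower semicontinuity of $\norm{\cdot}_{C^{\gamma_0/2}_tL^2_x}$ with Fatou in $\omega$ bounds $\norm{\bar u}_{L^m_\omega C^{\gamma_0/2}_tL^2_x}$. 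For the last term, Fatou in $\omega$ applied to \eqref{a priori for vitali} shows $\liminf_\epsilon\norm{\bar u^\epsilon(\omega)}_{L^\infty_tH^{\gamma_0}_x}<\infty$ for $\bar\PP$-a.e.\ $\omega$; along an $\omega$-dependent subsequence $\bar u^\epsilon(\omega)$ then converges weak-$*$ in $L^\infty_tH^{\gamma_0}_x$, the limit necessarily being $\bar u(\omega)$ by the $C_tL^2_x$-convergence, so $\norm{\bar u(\omega)}_{L^\infty_tH^{\gamma_0}_x}\le\liminf_\epsilon\norm{\bar u^\epsilon(\omega)}_{L^\infty_tH^{\gamma_0}_x}$, and a final Fatou in $\omega$ gives the $L^m_\omega L^\infty_tH^{\gamma_0}_x$-bound. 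Altogether this yields \eqref{a priori bounds limit}.

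The only genuinely delicate point is the identification step: ensuring that independence of the increments of $\bar W$ from the past is not destroyed in the limit, so that $\bar W$ is Brownian \emph{with respect to the augmented filtration generated by $(\bar u,\bar W)$} and not merely in law. This is precisely the martingale-problem passage described above, where one must be careful that the test functionals $\Phi$ are continuous on $C_tL^2_x\times C_tH^{-1-}_x$ and depend only on the restriction to $[0,s]$, and that the random variables inside the expectations are uniformly integrable; everything else is a routine combination of Chebyshev, Aubin--Lions, Prokhorov--Skorokhod and Fatou.
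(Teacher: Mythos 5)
Your proposal is correct and follows essentially the same route as the paper: tightness in $C_tL^2_x$ from the uniform bound \eqref{a priori for vitali} via Chebyshev and the Aubin--Lions embedding, then Prokhorov--Skorokhod, identification of $\bar W$ as an $(\bar{\mathcal{F}}_t)_t$-Brownian motion, and transfer of the bounds by Fatou and lower semicontinuity. The paper only sketches these steps, and your write-up supplies the standard details (in particular the weak-$*$ argument for the $L^\infty_tH^{\gamma_0}_x$ bound and the martingale characterisation of $\bar W$) consistently with what is intended there.
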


    \section{Identification of the limit}\label{sec:identification}
  After obtaining a potential solution candidate $(\bar{u},\bar{W})$ in the previous Corollary \ref{extracted subsequence}, it remains to verify that the solution indeed satisfy the infinite dimensional equation
  \begin{equation}
       \bar{u}_t=u_0+\int_0^t \Delta \bar{u}_r\dd r+\int_0^t\sigma(\bar{u}_r-w_r)d\bar{W}_r.
       \label{limit eqn}
  \end{equation}
Let us recall that the stochastic integral on the right hand side of \eqref{limit eqn} is well-defined thanks to the a priori bound \eqref{a priori bounds limit} and Lemma \ref{extending the integral}. While 
   the convergence of the linear part is immediate, the main challenge will consist in establishing the convergence of the stochastic integral term, i.e. showing that 
   \[
   \int_0^t\sigma(\bar{u}^\epsilon_r-w_r)d\bar{W}^\epsilon_r\to \int_0^t\sigma(\bar{u}_r-w_r)d\bar{W}_r
   \]
   Note that as we are working in a robustified sewing setting of Lemma \ref{extending the integral} and not with classical $L^2_\omega L^2_t$ based It\^o theory, the martingale representation theorem usually employed at this stage is unavailable to us. Instead, we use the more direct approach of the stochastic compactness method (\cite[Lemma A.1]{hofmanova2013degenerate}, \cite{BreitFeireislHofmanova}) already adapted in \cite{bechtold} to the finite dimensional setting. In the following, we demonstrate that the arguments developed in \cite{bechtold} essentially also extend to our infinite dimensional  setting with some care. Throughout this section, let $(e_j)_{j}$ be an orthonormal basis of $L^2(\mathbb{T})$. One can readily check that the three processes 
\begin{equation}
    \begin{split}
         t &\to M^{j,\epsilon}_{t}:= \langle \left(\bar{u}^\epsilon_t-u_0-\int_0^t \Delta \bar{u}^\epsilon_r\dd r\right), e_{j}\rangle=\langle\int_0^t\sigma_\epsilon(\bar{u}^\epsilon_r-w_r)d\bar{W}_r, e_j\rangle,\\
    t &\to (M^{j,\epsilon}_{t})^{2}- \sum_k\int_0^t|\int_\mathbb{T}\sigma_{k,\epsilon}( \bar{u}^\epsilon_r-w_r) e_j\dd x |^2 \dd r,\\
   t &\to M^{j,\epsilon}_t  \langle \bar{W}^\epsilon_t, e_i\rangle -\int_0^t\langle \sigma_{i,\epsilon}( \bar{u}^\epsilon_r-w_r), e_j \rangle \dd r,
    \end{split}
    \label{start}
\end{equation}
are all martingales with respect to $(\bar{\mathcal{F}}_t)_t$. Note that since $\sigma_\epsilon$ is smooth, we have by Lemma~\ref{identification} that
\begin{align*}
\int_0^t\langle \sigma_{i,\epsilon}( \bar{u}^\epsilon_r-w_r), e_j \rangle \dd r=(\mathcal{I}a^{i,j,\epsilon})_t,
\end{align*}
where 
\begin{align*}
    a^{i,j,\epsilon}_{s,t}= \langle (\sigma_{i, \epsilon}*L_{s,t})(\bar{u}^\epsilon_s), e_j\rangle.
\end{align*}
Note that \eqref{start} being martingales is equivalent to having that for any bounded continuous functional $\phi$ on $C([0,s], L^2)\times C([0,s], H^{-1-\delta})$  then 
\begin{equation}
    \begin{split}
        \bar{\mathbb{E}}[\phi(\bar{u}^\epsilon|_{[0,s]}, \bar{W}^\epsilon|_{[0,s]}) (M^{j, \epsilon}_t-M^{j, \epsilon}_s)]&=0,\\
        \bar{\mathbb{E}}[\phi(\bar{u}^\epsilon|_{[0,s]}, \bar{W}^\epsilon|_{[0,s]}) ((M^{j, \epsilon}_{t})^2-(M^{j, \epsilon}_{s})^2-\sum_k\int_s^t|\int_\mathbb{T}\sigma_{k,\epsilon}( \bar{u}^\epsilon_r-w_r) e_j\dd x |^2 \dd r]&=0,\\
        \bar{\mathbb{E}}[\phi(\bar{u}^\epsilon|_{[0,s]}, \bar{W}^\epsilon|_{[0,s]}) (M^{j,\epsilon}_{t}\langle\bar{W}_{t}^{\epsilon},e_{i}\rangle-M^{j,\epsilon}_{s}\langle\bar{W}_{s}^{\epsilon},e_{i}\rangle-(\mathcal{I}a^{i,j,\epsilon})_{s,t})]&=0.
        \label{martingale_epsilon}
    \end{split}
\end{equation}
We next intend to pass to the limit in \eqref{martingale_epsilon}. Note that by almost sure convergence of Corollary \ref{extracted subsequence} and \eqref{a priori for vitali} all the terms with the exception of the appearing sewing and the term 
\begin{equation}
    \sum_k\int_s^t|\int_\mathbb{T}\sigma_{k,\epsilon}( \bar{u}^\epsilon_r-w_r) e_j\dd x |^2 \dd r,
    \label{critical term}
\end{equation}
converge due to  Vitali's convergence theorem. For the sewing $\mathcal{I}a^{i,j,\epsilon}$ we shall employ Lemma \ref{sewing convergence} from the appendix, which is illustrated in the next Lemma. 
\begin{lem}
\label{step one conv}
Suppose the setting of Corollary \ref{extracted subsequence}. For $s< t\in [0,T]$ and $m\geq 1$, we 
\begin{align*}
    \norm{(\mathcal{I}a^{\epsilon})_{s,t}- (\mathcal{I}a)_{s,t}}_{L^m(\bar{\Omega})}&\to 0,
\end{align*}
where 
\begin{align*}
    a^{i,j}_{s,t}&= \langle (\sigma_{i}*L_{s,t})(\bar{u}_s), e_j\rangle.
\end{align*}
\end{lem}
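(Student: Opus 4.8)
The plan is to apply the abstract sewing-convergence result (Lemma~\ref{sewing convergence} from the appendix) to the family of germs $a^{i,j,\epsilon}$ and their limit $a^{i,j}$, so the task reduces to verifying the hypotheses of that lemma uniformly in $\epsilon$ together with pointwise (or $L^m$) convergence of the germs themselves. First I would record that, since $\sigma_\epsilon$ is a cut-off mollification, each germ $a^{i,j,\epsilon}_{s,t}=\langle(\sigma_{i,\epsilon}*L_{s,t})(\bar u^\epsilon_s),e_j\rangle$ admits a sewing $(\mathcal I a^{i,j,\epsilon})$ by exactly the computation in Lemma~\ref{identification}: one estimates $\|a^{i,j,\epsilon}_{s,t}\|$ via Young's convolution inequality by $\|\sigma_{i,\epsilon}\|_{L^p_x}\|L\|_{C^{\gamma_1}_tL^{p'}_x}|t-s|^{\gamma_1}$ (or $|t-s|^{\gamma_0}$ using the $C^0_x$ bound on $T^{-w}$), and $\|\delta a^{i,j,\epsilon}_{s,u,t}\|$ by $\|\sigma_{i,\epsilon}\|_{L^p_x}\|L\|_{C^{\gamma_1}_tW^{1,p'}_x}|t-s|^{\gamma_1}\|\bar u_u-\bar u_s\|_{L^2_x}$, which combined with the a-priori bound $\|\bar u\|_{C^{\gamma_0/2}_tL^m_\omega L^2_x}<\infty$ from Corollary~\ref{extracted subsequence} gives a $\delta$-regularity exponent $\gamma_1+\gamma_0/2>1$ (Assumption~\ref{fixing null set}), hence sewability in $E=L^{m}(\bar\Omega)$.

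Next I would quantify the convergence in the two parameters that vary with $\epsilon$: the mollified coefficient $\sigma_{i,\epsilon}\to\sigma_i$ and the process $\bar u^\epsilon\to\bar u$. For the coefficient, Vitali's convergence theorem gives $\|\sigma_{i,\epsilon}-\sigma_i\|_{L^p_x}\to 0$ (componentwise, and in the aggregated sense $\|\Sigma^2_{\epsilon}-\Sigma^2\|_{L^p_x}\to 0$ as already used in Lemma~\ref{extending the integral}); plugging $\sigma_{i,\epsilon}-\sigma_i$ into the Young estimates above shows $\|a^{i,j,\epsilon}-a^{i,j,0}\|_{\gamma}+\|\delta(a^{i,j,\epsilon}-a^{i,j,0})\|_{\gamma+\rho\alpha}\to 0$ where $a^{i,j,0}$ denotes the germ built from $\sigma_i$ but still evaluated at $\bar u^\epsilon_s$. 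For the process convergence, I would use that $\bar u^\epsilon\to\bar u$ $\bar{\mathbb P}$-a.s. in $C_tL^2_x$ together with the uniform bound $\sup_\epsilon\|\bar u^\epsilon\|_{C^{\gamma_0/2}_tL^m_\omega L^2_x}<\infty$; this controls $\langle(\sigma_i*L_{s,t})(\bar u^\epsilon_s)-(\sigma_i*L_{s,t})(\bar u_s),e_j\rangle$ using the spatial Lipschitz gain from the local time (the $W^{1,p'}_x$ regularity of $L$, i.e. $T^{-w}\Sigma^2\in C^1_x$), so $\|a^{i,j,0}_{s,t}-a^{i,j}_{s,t}\|\lesssim\|\sigma_i\|_{L^p_x}\|L\|_{C^{\gamma_1}_tW^{1,p'}_x}|t-s|^{\gamma_1}\|\bar u^\epsilon_s-\bar u_s\|_{L^2_x}$, and similarly for $\delta$; a dominated-convergence argument (justified by the uniform moment bounds) then sends these to $0$ in the relevant norms.

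Having these two ingredients — uniform sewability and germ convergence in the $C^\gamma$/$C^{\gamma+\rho\alpha}$ seminorms, both at the level of $L^m(\bar\Omega)$ — the conclusion $\|(\mathcal I a^\epsilon)_{s,t}-(\mathcal I a)_{s,t}\|_{L^m(\bar\Omega)}\to 0$ follows directly from Lemma~\ref{sewing convergence} (the sewing map being continuous/Lipschitz in the germ seminorms, uniformly over the admissible parameter range fixed by Assumption~\ref{fixing null set}). I expect the main obstacle to be the bookkeeping around the two simultaneous limits: one must interpolate so that the a.s. $C_tL^2_x$-convergence of $\bar u^\epsilon$ upgrades to convergence of $\|a^{i,j,0}-a^{i,j}\|_\gamma$ in $L^m(\bar\Omega)$ despite only having H\"older bounds in $L^m_\omega$ rather than $\bar{\mathbb P}$-a.s., which is where uniform integrability via the $m$-th moment bounds of Corollary~\ref{extracted subsequence} (and Vitali once more) does the work. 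A secondary subtlety is making sure the $\delta$-regularity exponent used to invoke the sewing-convergence lemma is the same strictly-greater-than-one exponent $\gamma_1+\gamma_0/2$ throughout, so the constants in the sewing bound are uniform in $\epsilon$.
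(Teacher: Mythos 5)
Your proposal is correct and follows essentially the same route as the paper: a uniform-in-$\epsilon$ bound on $\norm{\delta a^{i,j,\epsilon}}_{\gamma_0/2+\gamma_1}$ in $L^m(\bar\Omega)$, the same splitting of $a^{i,j}-a^{i,j,\epsilon}$ through the intermediate germ built from $\sigma_i$ evaluated at $\bar u^\epsilon_s$ (with Vitali's theorem handling both $\sigma_{i,\epsilon}\to\sigma_i$ in $L^p_x$ and the upgrade of $\bar u^\epsilon\to\bar u$ to $L^m_\omega C_tL^2_x$), followed by Lemma~\ref{sewing convergence}. The only cosmetic difference is that you also propose controlling $\delta(a^{i,j,0}-a^{i,j})$ in the higher seminorm, which Lemma~\ref{sewing convergence} does not actually require.
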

\begin{proof}

Observe that due to \eqref{a priori for vitali} we have 
\begin{align*}
    \norm{(\delta a^{i, j, \epsilon})_{s,u,t}}_{L^m(\bar{\Omega})}&=\norm{\int_\mathbb{T}((\sigma_{i, \epsilon}*L_{u,t})(\bar{u}^\epsilon_u)-(\sigma_{i, \epsilon}*L_{u,t})(\bar{u}^\epsilon_s))e_j\dd x}_{L^m(\bar{\Omega})}\\
    &\lesssim\bar{\mathbb{E}}\left[\sup_{t\neq s\in[0,T]}\frac{\norm{\bar{u}^\epsilon_{t}-\bar{u}^\epsilon_s}_H}{|t-s|^{\gamma_0/2}}^m\right]^{1/m}\norm{\sigma_{i, \epsilon}}_{L^p_x}\|L\|_{W^{1,p'}_x}|t-s|^{\gamma_0/2+\gamma_1}\\
    &\lesssim \norm{\sigma_i}_{L^p_x}|t-s|^{\gamma_0/2+\gamma_1}
\end{align*}
uniformly in $\epsilon>0$. Moreover, by \eqref{a priori for vitali} and Vitali's theorem, we have that actually $u^\epsilon\to u$ in $L^m_\omega C_t L^2_x$. We therefore observe that
\begin{align*}
    \norm{a^{i,j}_{s,t}-a^{i,j,\epsilon}_{s,t}}_{L^m(\bar{\Omega})}&\leq \norm{\int_\mathbb{T}((\sigma_i*L_{s,t})(\bar{u}_s)-(\sigma_i*L_{s,t})(\bar{u}^\epsilon_s))e_j\dd x}_{L^m(\bar{\Omega})}\\
    &+\norm{\int_\mathbb{T}((\sigma_i*L_{s,t})(\bar{u}_s^\epsilon)-(\sigma_{i, \epsilon}*L_{s,t})(\bar{u}^\epsilon_s))e_j\dd x}_{L^m(\bar{\Omega})}\\
    &\lesssim |t-s|^{\gamma_1}\norm{\sigma_i}_{L^p_x}\bar{\mathbb{E}}[\norm{\bar{u}^\epsilon-\bar{u}}_{C([0,T], H)}^m]^{1/m}+\norm{\sigma_i-\sigma_{i,\epsilon}}_{L^p}|t-s|^{\gamma_0/2}. 
\end{align*}
By Lemma \ref{sewing convergence}, this implies that indeed $\mathcal{I}a^{\epsilon}\to \mathcal{I}a$ in $C^{\gamma_0/2 \wedge \gamma_1}_tL^m(\bar{\Omega})$ and thus in particular the claim.
\end{proof}
We now pass to the convergence of \eqref{critical term}
\begin{lem}
\label{main convergence lemma}
The following convergence holds in $L^{m/2}(\bar{\Omega})$:
\[
\sum_k\int_s^t|\int_\mathbb{T}\sigma_{k,\epsilon}( \bar{u}^\epsilon_r-w_r) e_j\dd x |^2 \dd r \to \sum_k\int_s^t|\int_\mathbb{T}\sigma_{k}( \bar{u}_r-w_r) e_j\dd x |^2 \dd r.
\]

\end{lem}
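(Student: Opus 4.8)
The plan is to decompose the quantity in question into a "diffusion coefficient" part and a "Wiener process" part and to handle each separately. Write the left-hand side, using the notation of the excerpt, as
\[
\sum_k\int_s^t\Big|\int_\mathbb{T}\sigma_{k,\epsilon}(\bar u^\epsilon_r-w_r)e_j\,\dd x\Big|^2\dd r
=\int_s^t\Big\langle(\sigma_\epsilon\sigma_\epsilon^*)(\bar u^\epsilon_r-w_r)e_j,e_j\Big\rangle\,\dd r,
\]
and similarly for the limiting object. The first step is to insert and subtract the mixed term
\[
\sum_k\int_s^t\Big(\int_\mathbb{T}\sigma_{k,\epsilon}(\bar u^\epsilon_r-w_r)e_j\,\dd x\Big)\Big(\int_\mathbb{T}\sigma_{k}(\bar u_r-w_r)e_j\,\dd x\Big)\dd r
\]
so that, after using $a^2-b^2=(a-b)(a+b)$ componentwise and Cauchy--Schwarz in $k$ (the $\ell^2$ index) and in $x$, the difference is bounded by a product of an $L^2$-in-$(k,x)$ difference $\|\sigma_\epsilon(\bar u^\epsilon_r-w_r)-\sigma(\bar u_r-w_r)\|_{\cL_2(L^2_x)}$ against the sum of the two Hilbert--Schmidt norms $\|\sigma_\epsilon(\bar u^\epsilon_r-w_r)\|_{\cL_2}$ and $\|\sigma(\bar u_r-w_r)\|_{\cL_2}$. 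Integrating in $r$ and applying Cauchy--Schwarz in time, the task reduces to controlling $\int_s^t\|\sigma_\epsilon(\bar u^\epsilon_r-w_r)-\sigma(\bar u_r-w_r)\|_{\cL_2(L^2_x)}^2\dd r$ in $L^{m/2}(\bar\Omega)$, times the already-controlled (by \eqref{a priori for vitali} and Corollary \ref{corrected coro}) $L^{m/2}$-norms of the time integrals of $\Sigma^2_\epsilon(\bar u^\epsilon-w)$ and $\Sigma^2(\bar u-w)$.

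For the remaining difference I would split once more, $\sigma_\epsilon(\bar u^\epsilon-w)-\sigma(\bar u-w)=\big(\sigma_\epsilon(\bar u^\epsilon-w)-\sigma(\bar u^\epsilon-w)\big)+\big(\sigma(\bar u^\epsilon-w)-\sigma(\bar u-w)\big)$, i.e. separating the mollification error from the error due to replacing $\bar u^\epsilon$ by $\bar u$. In Hilbert--Schmidt norm the first piece is $\int_\mathbb{T}\Sigma^2_{\epsilon,0}(\bar u^\epsilon_r-w_r)\dd x$ where $\Sigma^2_{\epsilon,0}:=\sum_k(\sigma_{k,\epsilon}-\sigma_k)^2$, which by Vitali's theorem satisfies $\|\Sigma^2_{\epsilon,0}\|_{L^p_x}\to0$; one then runs exactly the Volterra-sewing a priori bound of Corollary \ref{corrected coro} (with $\eta=0$) against this vanishing $L^p_x$-norm, using the uniform-in-$\epsilon$ bound on $\|\bar u^\epsilon\|_{C^{\gamma_0/2}_tL^m_\omega L^2_x}$ from \eqref{a priori for vitali}, to get that its $L^{m/2}(\bar\Omega)$-norm tends to zero. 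The second piece is $\int_s^t\int_\mathbb{T}\big(\Sigma^2(\bar u^\epsilon_r-w_r)+\Sigma^2(\bar u_r-w_r)-2\sum_k\sigma_k(\bar u^\epsilon_r-w_r)\sigma_k(\bar u_r-w_r)\big)\dd x\,\dd r$, which can be written as a difference of convolutions of $\Sigma^2$ and the "mixed" kernel against the local time $L$ evaluated along $\bar u^\epsilon$ versus $\bar u$; since $\bar u^\epsilon\to\bar u$ in $L^m_\omega C_tL^2_x$ (noted in the proof of Lemma \ref{step one conv} via \eqref{a priori for vitali} and Vitali) and the relevant kernels lie in $W^{1,p'}_x$ with $L\in C^{\gamma_1}_tW^{1,p'}_x$, the Lipschitz-in-the-base-point estimate from Lemma \ref{identification} gives a bound of the form $|t-s|^{\gamma_1}\|\Sigma^2\|_{L^p_x}\,\|\bar u^\epsilon-\bar u\|_{C_tL^2_x}$ which vanishes in $L^{m/2}(\bar\Omega)$.

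The main obstacle is that $\Sigma^2$ is only in $L^p_x$, so none of these expressions make pointwise sense and every manipulation must be carried out at the level of the robustified/Volterra sewing, meaning one has to re-verify for each of the auxiliary germs (the mollification-error germ and the mixed germ $\int_\mathbb{T}(\sum_k\sigma_k\cdot\sigma_k)*L_{s,t}$) the two hypotheses of Lemma \ref{lem: NLY volterra sewing} — a $\gamma_0/2$-type bound on the germ and a $(\gamma_0/2+\gamma_1)$-type bound on $\delta$ of the germ — with constants that are uniform in $\epsilon$ and that carry the vanishing factor. Once these sewing estimates are in place with the correct dependence on $\|\Sigma^2_{\epsilon,0}\|_{L^p_x}$ and on $\|\bar u^\epsilon-\bar u\|_{C_tL^2_x}$, assembling the pieces via the triangle inequality and Cauchy--Schwarz in $\bar\Omega$ is routine and yields the claimed $L^{m/2}(\bar\Omega)$-convergence.
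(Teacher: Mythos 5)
Your overall architecture coincides with the paper's: the $a^2-b^2=(a-b)(a+b)$ factorization with Cauchy--Schwarz in $k$, $x$ and $r$, the uniform-in-$\epsilon$ bound on the ``sum'' factor via the local-time regularization of $\Sigma^2_\epsilon+\Sigma^2$, and the split of the ``difference'' factor into a mollification error (handled by Vitali plus stability of the sewings, exactly as in Lemma \ref{extending the integral}) and a replacement error. The gap is in your treatment of the replacement error
\[
\sum_k\int_s^t\Big(\int_\mathbb{T}\big(\sigma_k(\bar u^\epsilon_r-w_r)-\sigma_k(\bar u_r-w_r)\big)e_j\,\dd x\Big)^2\dd r ,
\]
for which you claim a quantitative bound $|t-s|^{\gamma_1}\norm{\Sigma^2}_{L^p_x}\norm{\bar u^\epsilon-\bar u}_{C_tL^2_x}$ coming from the Lipschitz-in-the-base-point estimate, i.e.\ from $\Sigma^2*L_{s,t}\in C^1_x$. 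That estimate does not apply here. Expanding the square produces, besides the single-argument terms $\Sigma^2(\bar u^\epsilon_r-w_r)$ and $\Sigma^2(\bar u_r-w_r)$, the cross term $\sum_k\sigma_k(\bar u^\epsilon_r-w_r)\sigma_k(\bar u_r-w_r)$, a two-point kernel $G(a,b)=\sum_k\sigma_k(a)\sigma_k(b)$ evaluated at two distinct base points. Its time average equals $\int_\RR G(y_1-z,y_2-z)L_{s,t}(z)\dd z=(G_h*L_{s,t})(y_2)$ with $G_h(\cdot)=\sum_k\sigma_k(\cdot+h)\sigma_k(\cdot)$ and $h=y_1-y_2$: the kernel being convolved against $L$ itself depends on $h$, so the $C^1_x$ regularity of $\Sigma^2*L_{s,t}$ gives no control over it. Equivalently, the whole replacement error is governed by $\norm{H_h}_{L^p_x}$ with $H_h=\sum_k(\sigma_k(\cdot+h)-\sigma_k)^2$; this tends to $0$ as $h\to0$, but with \emph{no rate} under the sole assumption $\Sigma^2\in L^p_x$ (a single $\sigma_1$ with $|\hat\sigma_1(\xi)|\simeq|\xi|^{-1/2-\delta}$ gives $\norm{H_h}_{L^1_x}\simeq|h|^{2\delta}$ for arbitrarily small $\delta$). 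So the asserted Lipschitz bound is false in general, and the final assembly of your argument does not close.

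The paper circumvents this by a purely qualitative argument at exactly this point: after reducing to $\sum_k\norm{\sigma_k(\bar u_r-(\bar u_r-\bar u^\epsilon_r)-w_r)e_j-\sigma_k(\bar u_r-w_r)e_j}^2_{L^2_{t,x}}$, it passes to a further subsequence along which $\bar u^\epsilon\to\bar u$ uniformly in $(t,x)$ and invokes continuity of the translation operator in $L^2_{t,x}$, obtaining convergence to zero without any rate. To repair your proof, either replace the quantitative step by this compactness argument, or develop a genuine two-parameter averaging estimate for the kernel $G$ along the local time --- which is not available in the paper and would be a separate piece of work.
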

\begin{proof}
    Let us first remark that the right hand side is a well-defined object in $L^{m/2}(\Omega)$. Indeed, this can be seen by 
    \begin{align*}
       \norm{ \sum_k\int_s^t|\int_\mathbb{T}\sigma_{k}( \bar{u}_r-w_r) e_j\dd x |^2 \dd r}_{L^m(\Omega)}&\lesssim\norm{ \sum_k\int_s^t\int_\mathbb{T}|\sigma_{k}( \bar{u}_r-w_r)|^2\dd x\dd r}_{L^m(\Omega)}\\&=\norm{\int_s^t\int_\mathbb{T}\Sigma^2( \bar{u}_r-w_r)\dd x\dd r}_{L^m(\Omega)}\\
        &\lesssim \norm{\Sigma^2}_{L^p_x}\norm{\bar{u}}_{L^m_\omega C^{\gamma_0/2}_tL^2_x},
    \end{align*}
 where we   again exploited the gain of regularity due to the local time of $w$. Next, note that by H\"older's inequality we have 
 \begin{align*}
     &\sum_k\int_s^t|\int_\mathbb{T}\sigma_{k,\epsilon}( \bar{u}^\epsilon_r-w_r) e_j\dd x |^2 \dd r-\sum_k\int_s^t|\int_\mathbb{T}\sigma_{k}( \bar{u}_r-w_r) e_j\dd x |^2 \dd r\\
     =&\sum_k\int_s^t \left(\int_\mathbb{T}\sigma_{k,\epsilon}( \bar{u}^\epsilon_r-w_r) e_j -\sigma_{k}( \bar{u}_r-w_r) e_j\dd x\right)\left( \int_\mathbb{T}\sigma_{k,\epsilon}( \bar{u}^\epsilon_r-w_r) e_j+\sigma_{k}( \bar{u}_r-w_r) e_j\dd x\right)\dd r\\
     &\lesssim \left(\sum_k\int_s^t  \left(\int_\mathbb{T}\sigma_{k,\epsilon}( \bar{u}^\epsilon_r-w_r) e_j -\sigma_{k}( \bar{u}_r-w_r) e_j\dd x\right)^2\dd r\right)^{1/2}\Xi^\epsilon_{s,t},
 \end{align*}
 where the term $\Xi$ is given by 
\[
\Xi^\epsilon_{s,t}=\left(\sum_k\int_s^t  \left(\int_\mathbb{T}\sigma_{k,\epsilon}( \bar{u}^\epsilon_r-w_r) e_j +\sigma_{k}( \bar{u}_r-w_r) e_j\dd x\right)^2\dd r\right)^{1/2}.
\] 
It follows immediately that 
 \begin{align*}
    \Xi_{s,t}^\epsilon &\lesssim \left(\sum_k\int_s^t  \int_\mathbb{T}\sigma_{k,\epsilon}^2( \bar{u}^\epsilon_r-w_r) e_j^2 +\sigma_{k}^2( \bar{u}_r-w_r) e_j^2\dd x\dd r\right)^{1/2}\\
      &=\left(\int_s^t  \int_\mathbb{T}\Sigma_{\epsilon}^2( \bar{u}^\epsilon_r-w_r) e_j^2 +\Sigma^2( \bar{u}_r-w_r) e_j^2\dd x\dd r\right)^{1/2}.
 \end{align*}
i.e. $\Xi^\epsilon_{s,t}$ is uniformly bounded in $\epsilon>0$. Concerning the remaining term, we first observe that by similar  arguments as in Lemma \ref{extending the integral} we have 
\[
\sum_k\int_s^t  \left(\int_\mathbb{T}\sigma_{k,\epsilon}( \bar{u}^\epsilon_r-w_r) e_j -\sigma_{k}( \bar{u}^\epsilon_r-w_r) e_j\dd x\right)^2\dd r\to 0.
\]
 It therefore remains to consider 
\begin{align*}
    &\sum_k\int_s^t  \left(\int_\mathbb{T}\sigma_{k}( \bar{u}^\epsilon_r-w_r) e_j -\sigma_{k}( \bar{u}_r-w_r) e_j\dd x\right)^2\dd r\\
   &\lesssim \sum_k\int_s^t \int_\mathbb{T} \left(\sigma_{k}( \bar{u}_r-(\bar{u}_r-\bar{u}^\epsilon_r)-w_r) e_j -\sigma_{k}( \bar{u}_r-w_r) e_j\right)^2\dd x\dd r\\
   &= \sum_k\norm{\sigma_{k}( \bar{u}_r-(\bar{u}_r-\bar{u}^\epsilon_r)-w_r) e_j -\sigma_{k}( \bar{u}_r-w_r) e_j}_{L^2_{t,x}}^2
\end{align*}
Remark that, upon passing to a further subsequence, we may assume $u^\epsilon\to u$ uniformly in $(t, x)\in [0, T]\times \mathbb{T}$. Hence, by continuity of the translation operator in $L^2_{t,x}$, we can conclude that indeed  
\[
\sum_k\int_s^t  \left(\int_\mathbb{T}\sigma_{k}( \bar{u}^\epsilon_r-w_r) e_j -\sigma_{k}( \bar{u}_r-w_r) e_j\dd x\right)^2\dd r\to 0,
\]
yielding the claim.
\end{proof}

By the above Lemma, we may now pass to the limit in \eqref{martingale_epsilon}, obtained for the martingale  $M_t:=\bar{u}_t-u_0-\int_0^t \Delta \bar{u}_r\dd r$.
\begin{cor}
\label{step two conv}
Fix Assumptions \ref{fixing null set}. For $i,j\in \mathbb{N}$, the following processes are martingales with respect to the filtration $(\bar{\mathcal{F}}_t)_t$. 
\begin{equation}
    \begin{split}
          t &\to M^{j}_{t}=\langle
          \left(\bar{u}_t-u_0-\int_0^t \Delta \bar{u}_r\dd r\right), e_j\rangle, \\
  t &\to (M^j_t)^2-\sum_k\int_0^t|\int_\mathbb{T}\sigma_{k}( \bar{u}_r-w_r) e_j\dd x |^2 \dd r,\\
t &\to M^{j}_t  \langle \bar{W}_t, e_i\rangle -(\mathcal{I}a^{i,j})_t.
    \end{split} \label{martingale-limit}
\end{equation}
\end{cor}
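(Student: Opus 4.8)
The plan is to obtain the three martingale properties by passing to the limit $\epsilon\to 0$ in the identities \eqref{martingale_epsilon}, which encode the $(\bar{\mathcal{F}}_t)_t$-martingale property of the approximating processes \eqref{start}. Recall that $(\bar{\mathcal{F}}_t)_t$ is the augmentation of the filtration generated by $(\bar{u},\bar{W})$, and that $M^j$, $(M^j)^2$ and $M^j\langle\bar{W}_\cdot,e_i\rangle$ are integrable and adapted thanks to \eqref{a priori bounds limit}. Hence the identities obtained in the limit, valid for every bounded continuous functional $\phi$ on $C([0,s],L^2_x)\times C([0,s],H^{-1-\delta}_x)$, are exactly the assertion that the three processes in \eqref{martingale-limit} are $(\bar{\mathcal{F}}_t)_t$-martingales. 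Throughout, fix $i,j\in\mathbb{N}$, $0\le s<t\le T$, and such a $\phi$, and write $\Phi:=\phi(\bar{u}|_{[0,s]},\bar{W}|_{[0,s]})$, $\Phi^\epsilon:=\phi(\bar{u}^\epsilon|_{[0,s]},\bar{W}^\epsilon|_{[0,s]})$.

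First I would treat the ``elementary'' terms. By Corollary \ref{extracted subsequence} we have $(\bar{u}^\epsilon,\bar{W}^\epsilon)\to(\bar{u},\bar{W})$ $\bar{\mathbb{P}}$-almost surely in $C_tL^2_x\times C_tH^{-1-}_x$, so $\Phi^\epsilon\to\Phi$ almost surely and $\langle\bar{W}^\epsilon_\cdot,e_i\rangle\to\langle\bar{W}_\cdot,e_i\rangle$. Writing $\langle\Delta\bar{u}^\epsilon_r,e_j\rangle=\langle\bar{u}^\epsilon_r,\Delta e_j\rangle$ and using the uniform-in-$r$ convergence $\bar{u}^\epsilon_r\to\bar{u}_r$ in $L^2_x$, dominated convergence gives $\int_0^t\langle\Delta\bar{u}^\epsilon_r,e_j\rangle\dd r\to\int_0^t\langle\Delta\bar{u}_r,e_j\rangle\dd r$, hence $M^{j,\epsilon}_\cdot\to M^j_\cdot$ almost surely, uniformly on $[0,T]$. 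Together with the uniform moment bounds \eqref{a priori for vitali} (which hold for $(\bar{u}^\epsilon,\bar{W}^\epsilon)$ since its law coincides with that of $(u^\epsilon,W^\epsilon)$) and the fact that $\langle\bar{W}^\epsilon_t,e_i\rangle$ is a standard Brownian motion with all moments, Vitali's theorem (combined with Hölder's inequality for the cross term $M^{j,\epsilon}_t\langle\bar{W}^\epsilon_t,e_i\rangle$) upgrades these almost sure convergences to convergence in $L^q(\bar{\Omega})$ for some $q>1$, uniformly for $m$ as large as granted by \eqref{a priori for vitali}; multiplying by the bounded $\Phi^\epsilon\to\Phi$ preserves this convergence.

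Next I would handle the two ``singular'' terms, which is where the preceding lemmas enter: the sewing term converges by Lemma \ref{step one conv}, namely $(\mathcal{I}a^{i,j,\epsilon})_{s,t}\to(\mathcal{I}a^{i,j})_{s,t}$ in $L^m(\bar{\Omega})$, and the quadratic-variation term \eqref{critical term} converges by Lemma \ref{main convergence lemma} in $L^{m/2}(\bar{\Omega})$; again multiplication by the bounded $\Phi^\epsilon$ is harmless. Collecting everything, each of the three integrands appearing in \eqref{martingale_epsilon} converges in $L^1(\bar{\Omega})$, so taking $\bar{\mathbb{E}}$ and letting $\epsilon\to0$ yields
\begin{align*}
\bar{\mathbb{E}}\big[\Phi\,(M^j_t-M^j_s)\big]&=0,\\
\bar{\mathbb{E}}\Big[\Phi\,\Big((M^j_t)^2-(M^j_s)^2-\sum_k\int_s^t\big|\int_\mathbb{T}\sigma_k(\bar{u}_r-w_r)e_j\dd x\big|^2\dd r\Big)\Big]&=0,\\
\bar{\mathbb{E}}\big[\Phi\,\big(M^j_t\langle\bar{W}_t,e_i\rangle-M^j_s\langle\bar{W}_s,e_i\rangle-(\mathcal{I}a^{i,j})_{s,t}\big)\big]&=0,
\end{align*}
for every bounded continuous $\phi$. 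Since $(\bar{\mathcal{F}}_t)_t$ is generated by $(\bar{u},\bar{W})$ (up to augmentation), these identities are precisely the claimed martingale property of \eqref{martingale-limit}.

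The main obstacle I anticipate is not a single estimate but the bookkeeping needed to justify exchanging limit and expectation: one must ensure the a-priori exponent $m$ in \eqref{a priori for vitali} is large enough that the quadratic objects $(M^{j,\epsilon})^2$, the brackets in \eqref{critical term}, and the cross terms $M^{j,\epsilon}\langle\bar{W}^\epsilon_\cdot,e_i\rangle$ are bounded in $L^q(\bar{\Omega})$ for some $q>1$ uniformly in $\epsilon$, so that in-measure convergence plus uniform integrability gives $L^1$ convergence — this is exactly what Lemmas \ref{step one conv} and \ref{main convergence lemma} were designed to deliver. A minor technical point is that the almost sure convergences invoked above hold only along the subsequence produced in Corollary \ref{extracted subsequence} (and, for the uniform-in-$(t,x)$ statement used inside Lemma \ref{main convergence lemma}, along a further subsequence); since the limit $(\bar{u},\bar{W})$ is already fixed, passing to further subsequences does not affect the conclusion.
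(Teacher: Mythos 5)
Your proposal is correct and follows essentially the same route as the paper, which proves this corollary simply by passing to the limit in \eqref{martingale_epsilon}: the elementary terms via the almost sure convergence of Corollary \ref{extracted subsequence} together with the uniform bounds \eqref{a priori for vitali} and Vitali's theorem, the sewing term via Lemma \ref{step one conv}, and the quadratic-variation term via Lemma \ref{main convergence lemma}. Your write-up merely makes explicit the bookkeeping (integrability, functional-characterization of the martingale property, subsequence caveat) that the paper leaves implicit.
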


In order to conclude that the so obtained martingale $M$ coincides with the stochastic integral 
\[
t\to \int_0^t \sigma(\bar{u}_s-w_s)d\bar{W}_s,
\]
(which is well defined in this setting thanks to Lemma \ref{extending the integral}) and the available regularity of $\bar{u}$ as obtained in \eqref{a priori bounds limit} we need to extend \cite[Proposition A.1]{hofmanova2013degenerate} to our sewing setting. That is precisely the content of the next Lemma \ref{martingale identification}.

\begin{lem}
Fix assumption \ref{fixing null set}. Suppose that for $i,j \in \mathbb{N}$ the processes in \eqref{martingale-limit} are martingales. Then we have 
\[
M_t=\int_0^t \sigma(\bar{u}_s-w_s)d\bar{W}_s.
\]
\label{martingale identification}
\end{lem}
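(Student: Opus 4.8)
The plan is to introduce the stochastic integral $N_t:=\int_0^t\sigma(\bar u_s-w_s)\,d\bar W_s$, which is well defined and is a continuous $L^2_x$-valued $(\bar{\mathcal F}_t)_t$-martingale by Lemma \ref{extending the integral} and the a priori bound \eqref{a priori bounds limit}, and to prove $M_t=N_t$ by matching all relevant quadratic (co)variations, in the spirit of \cite[Prop.~A.1]{hofmanova2013degenerate}. Since $(\bar{\mathcal F}_t)_t$ is generated by the pair $(\bar u,\bar W)$ and not by $\bar W$ alone, the usual martingale representation argument is unavailable, so I argue component-wise: set $M^j_\cdot=\langle M_\cdot,e_j\rangle$, $N^j_\cdot=\langle N_\cdot,e_j\rangle$, $\beta^k_\cdot:=\langle\bar W_\cdot,e_k\rangle$, and $q^j_t:=\sum_k\int_0^t|\langle\sigma_k(\bar u_r-w_r),e_j\rangle|^2\,dr$. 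Corollary \ref{step two conv} says that $M^j$, $(M^j)^2-q^j$ and $M^j\beta^k-(\mathcal I a^{k,j})$ are $(\bar{\mathcal F}_t)_t$-martingales, hence $\langle\langle M^j\rangle\rangle=q^j$ and $\langle\langle M^j,\beta^k\rangle\rangle=(\mathcal I a^{k,j})$ for every $k$. It then suffices to show $\bar{\mathbb E}[(M^j_t-N^j_t)^2]=0$ for all $j$ and $t\in[0,T]$: by path continuity this forces $M^j\equiv N^j$ for all $j$ simultaneously, and since the $e_j$ span a dense subspace and $N_t\in L^2_x$ this yields $M_t=N_t$.

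The second step is to derive the analogous identities for $N^j$ by mollification. For $N^{j,\epsilon}_t:=\langle\int_0^t\sigma_\epsilon(\bar u_r-w_r)\,d\bar W_r,e_j\rangle=\sum_k\int_0^t\langle\sigma_{k,\epsilon}(\bar u_r-w_r),e_j\rangle\,d\beta^k_r$ --- a genuine It\^o integral with smooth bounded integrand --- classical It\^o calculus gives that $(N^{j,\epsilon})^2-q^{j,\epsilon}$ and $N^{j,\epsilon}\beta^k-(\mathcal I\tilde a^{k,j,\epsilon})$ are martingales, with $q^{j,\epsilon}_t:=\sum_k\int_0^t|\langle\sigma_{k,\epsilon}(\bar u_r-w_r),e_j\rangle|^2\,dr$ and $\tilde a^{k,j,\epsilon}_{s,t}:=\langle(\sigma_{k,\epsilon}*L_{s,t})(\bar u_s),e_j\rangle$ (using Lemma \ref{identification} to identify $\int_s^t\langle\sigma_{k,\epsilon}(\bar u_r-w_r),e_j\rangle\,dr$ with this sewing). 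By Lemma \ref{extending the integral}, $N^{j,\epsilon}_t\to N^j_t$ in $L^m(\bar\Omega)$ for every $m$; arguing as in Lemma \ref{main convergence lemma} (with $\bar u$ in place of $\bar u^\epsilon$, which only simplifies matters) one gets $q^{j,\epsilon}_t\to q^j_t$ in $L^{m/2}(\bar\Omega)$, and arguing as in Lemma \ref{step one conv} one gets $(\mathcal I\tilde a^{k,j,\epsilon})_t\to(\mathcal I a^{k,j})_t$ in $L^m(\bar\Omega)$. Passing to the limit in the defining conditional-expectation identities, with the $\epsilon$-uniform bounds of Section \ref{sec:apriori and tightness} providing the uniform integrability, I conclude that $(N^j)^2-q^j$ and $N^j\beta^k-(\mathcal I a^{k,j})$ are also $(\bar{\mathcal F}_t)_t$-martingales, so $\langle\langle N^j\rangle\rangle=q^j$ and $\langle\langle N^j,\beta^k\rangle\rangle=(\mathcal I a^{k,j})=\langle\langle M^j,\beta^k\rangle\rangle$ for every $k$.

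For the final step, both $M^j$ and $N^{j,\epsilon}$ are continuous martingales vanishing at $0$ with sufficiently many moments, so the It\^o product rule together with $\langle\langle M^j,N^{j,\epsilon}\rangle\rangle_t=\sum_k\int_0^t\langle\sigma_{k,\epsilon}(\bar u_r-w_r),e_j\rangle\,d\langle\langle M^j,\beta^k\rangle\rangle_r$ gives
\[
\bar{\mathbb E}\big[M^j_tN^{j,\epsilon}_t\big]=\bar{\mathbb E}\Big[\sum_k\int_0^t\langle\sigma_{k,\epsilon}(\bar u_r-w_r),e_j\rangle\,d\langle\langle M^j,\beta^k\rangle\rangle_r\Big].
\]
Replacing $\langle\langle M^j,\beta^k\rangle\rangle$ by the equal process $\langle\langle N^j,\beta^k\rangle\rangle$ and reading the same computation backwards, the right-hand side equals $\bar{\mathbb E}[\langle\langle N^j,N^{j,\epsilon}\rangle\rangle_t]=\bar{\mathbb E}[N^j_tN^{j,\epsilon}_t]$. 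Letting $\epsilon\to0$ and using $N^{j,\epsilon}_t\to N^j_t$ in $L^2(\bar\Omega)$ yields $\bar{\mathbb E}[M^j_tN^j_t]=\bar{\mathbb E}[(N^j_t)^2]=\bar{\mathbb E}[q^j_t]$, the last equality being the It\^o isometry $\langle\langle N^j\rangle\rangle=q^j$. Combining with $\bar{\mathbb E}[(M^j_t)^2]=\bar{\mathbb E}[q^j_t]$ from Corollary \ref{step two conv},
\[
\bar{\mathbb E}\big[(M^j_t-N^j_t)^2\big]=\bar{\mathbb E}[(M^j_t)^2]-2\bar{\mathbb E}[M^j_tN^j_t]+\bar{\mathbb E}[(N^j_t)^2]=\bar{\mathbb E}[q^j_t]-2\bar{\mathbb E}[q^j_t]+\bar{\mathbb E}[q^j_t]=0,
\]
which finishes the proof.

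I expect the main obstacle to be the passage $\epsilon\to0$ in the three families of martingale identities for $N^{j,\epsilon}$ and in the bracket computation above: this is exactly where the $\epsilon$-uniform robustified estimates of Lemmas \ref{identification}, \ref{extending the integral} and Corollary \ref{corrected coro}, together with the convergence statements of Lemmas \ref{step one conv} and \ref{main convergence lemma}, are indispensable, since none of the integrands $\sigma_\epsilon(\bar u-w)$, $\Sigma^2_\epsilon(\bar u-w)$ converge in any naive pointwise-in-time sense --- they converge only once integrated, in the sewing sense. A secondary, more bookkeeping, point is to control the infinite sums over $k$ (as in Lemma \ref{main convergence lemma}, via $\Sigma^2\in L^p_x$ and the regularity gain from the local time of $w$) and to legitimately recover the $L^2_x$-valued identity $M_t=N_t$ from the scalar identities $M^j_t=N^j_t$, which uses that $\bar u$, and hence $M$ and $N$, take values in $L^2_x$ --- indeed in $H^{\gamma_0}_x$ by \eqref{a priori bounds limit}.
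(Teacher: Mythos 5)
Your overall strategy is the same as the paper's: reduce to showing $\bar{\mathbb{E}}[(M^j_t-N^j_t)^2]=0$, expand the square, evaluate $\bar{\mathbb{E}}[(M^j_t)^2]$ and $\bar{\mathbb{E}}[(N^j_t)^2]$ via the limiting martingale identities and the It\^o isometry, and compute the cross term by mollifying $\sigma$. The difference is in how the cross term is handled. The paper computes $\bar{\mathbb{E}}[M^j_t N^{j,\epsilon}_t]$ by hand: it approximates $\sigma_\epsilon(\bar u-w)$ by elementary processes, uses the martingale property of $M^j\langle\bar W,e_k\rangle-(\mathcal{I}a^{k,j})$ on each subinterval, and then must show that the resulting Riemann sums converge as a Riemann--Stieltjes integral against $t\mapsto(\mathcal{I}a^{k,j})_t$; this forces it to prove that the sewing $\mathcal{I}a^{k,j}$ is of bounded variation (the estimate \eqref{BV trick}, via Cauchy--Schwarz against $\mathcal{I}b^{k}$). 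You instead invoke the abstract covariation calculus ($\langle\langle M^j,N^{j,\epsilon}\rangle\rangle=\sum_k\int\phi_{k,\epsilon}\,d\langle\langle M^j,\beta^k\rangle\rangle$), which is cleaner on the surface but hides exactly the same issue.

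The one point that is not justified as written is the very first deduction: from "$M^j\langle\bar W,e_k\rangle-(\mathcal{I}a^{k,j})$ is a martingale" you conclude "$\langle\langle M^j,\beta^k\rangle\rangle=(\mathcal{I}a^{k,j})$". Uniqueness of the compensator in the Doob--Meyer decomposition requires the candidate compensator to be a continuous adapted process of \emph{finite variation}; a sewing is a priori only H\"older continuous of exponent $<1$, so this identification does not follow from the martingale property alone. This is precisely the content of the paper's bounded-variation argument \eqref{BV trick}, which you skip. Your proof is repairable without that estimate, but only by reordering: first establish, as you do in your second step, that $\langle\langle N^j,\beta^k\rangle\rangle=\lim_\epsilon(\mathcal{I}\tilde a^{k,j,\epsilon})=(\mathcal{I}a^{k,j})$ (a genuine bracket, hence automatically of finite variation by Kunita--Watanabe), and only then observe that $(\mathcal{I}a^{k,j})-\langle\langle M^j,\beta^k\rangle\rangle$ is a continuous martingale of finite variation started at $0$, hence zero. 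With that reordering (and the convergence of the infinite sums over $k$ controlled via $\Sigma^2\in L^p_x$ and Kunita--Watanabe, as you indicate), your argument goes through and is an acceptable alternative to the paper's more explicit Riemann--Stieltjes computation.
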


\begin{proof}
We show that for any $j\in \mathbb{N}$ 
\[
\bar{\mathbb{E}}[\langle M_t-\int_0^t \sigma(\bar{u}_r-w_r)d\bar{W}_r, e_j\rangle  ^2]=0.
\]
Let $\sigma_\epsilon$ be again a cut-off mollification. Note that by definition (refer to Lemma \ref{extending the integral}), 
\[
\lim_{\epsilon\to 0}\bar{\mathbb{E}}[\langle \left(\int_0^t \sigma(\bar{u}_s-w_s)d\bar{W}_s-\int_0^t \sigma_\epsilon(\bar{u}_s-w_s)d\bar{W}_s\right), e_j\rangle ^2]=0.
\]
Hence, it suffices to show
\[
\bar{\mathbb{E}}[\langle M_t-\int_0^t \sigma_\epsilon(\bar{u}_s-w_s)d\bar{W}_s, e_j\rangle^2]\to 0.
\]
By similar computations as in earlier proofs, we observe that 
\begin{align*}
&\bar{\mathbb{E}}[\langle M_t-\int_0^t \sigma_\epsilon(\bar{u}_s-w_s)d\bar{W}_s, e_j\rangle ^2]\\
=&\bar{\mathbb{E}}[\langle M_t, e_j\rangle^2]+\bar{\mathbb{E}}[\langle  \int_0^t \sigma_\epsilon(\bar{u}_s-w_s)d\bar{W}_s, e_j\rangle^2]-2\bar{\mathbb{E}}[\langle M_t, e_j\rangle \langle  \int_0^t \sigma_\epsilon(\bar{X}_s-w_s)d\bar{W}_s, e_j\rangle ]\\
=&\mathbb{E}\sum_k\int_0^t|\int_\mathbb{T}\sigma_{k}( \bar{u}_r-w_r) e_j\dd x |^2 \dd r+\bar{\mathbb{E}}[\sum_k\int_0^t|\int_\mathbb{T}\sigma_{k, \epsilon}( \bar{u}_r-w_r) e_j\dd x |^2 \dd r]\\
&-2\bar{\mathbb{E}}[\langle M_t, e_j\rangle \langle  \int_0^t \sigma_\epsilon(\bar{X}_s-w_s)d\bar{W}_s, e_j\rangle ].
\end{align*}
Concerning the third term after the last equality, we need to show that 
\begin{equation}
    \bar{\mathbb{E}}[\langle M_t, e_j\rangle \langle  \int_0^t \sigma_\epsilon(\bar{u}_s-w_s)d\bar{W}_s, e_j\rangle ]=\bar{\mathbb{E}}[\sum_k\int_0^t \left(\int_\mathbb{T}\sigma_{k}( \bar{u}_r-w_r) e_j\dd x\right)\left(\int_\mathbb{T}\sigma_{k, \epsilon}( \bar{u}_r-w_r) e_j\dd x \right)].
    \label{g sewing}
\end{equation}
To this end, note that the process 
$
t\to \sigma_{k, \epsilon}(\bar{u}_t-w_t)\in L^2_x
$
is progressively measurable and in $L^2(\Omega\times [0,T]\times \mathbb{T})$. Hence we can approximate it by elementary processes, i.e. take 
\[
\sigma_{\epsilon, N}(s)e_k:=\sum_{i=1}^N \sigma_{k, \epsilon}(\bar{u}_{t_i}-w_{t_i})1_{[t_i, t_{i+1})}(s),
\]
 where $s=t_1<t_2<\dots <t_{N+1}=t$. Then following some algebraic manipulations, relating the integral to standard sewings as earlier, we find 
\begin{align*}
    &\bar{\mathbb{E}}[\langle M_t-M_s, e_j\rangle \langle  \int_s^t \sigma_{\epsilon, N}(r)d\bar{W}_r, e_j\rangle  |\bar{\mathcal{F}}_s]\\
    =& \sum_i^N \sum_{k=1}\bar{\mathbb{E}}[\langle \sigma_{k, \epsilon}(\bar{u}_{t_i}-w_{t_i}), e_j\rangle \bar{\mathbb{E}}[\langle M_{t_{i+1}}, e_j\rangle \langle \bar{W}_{t_{i+1}}, e_k\rangle-\langle M_{t_{i}}, e_j\rangle \langle \bar{W}_{t_{i}}, e_k\rangle|\bar{\mathcal{F}}_{t_i}]|\bar{\mathcal{F}}_s] \\
    =& \sum_i^N\sum_{k=1}\bar{\mathbb{E}}[\langle \sigma_{k, \epsilon}(\bar{u}_{t_i}-w_{t_i}), e_j\rangle \bar{\mathbb{E}}[(\mathcal{I}a^{k,j})_{t_i, t_{i+1}}|\bar{\mathcal{F}}_{t_i}]\bar{\mathcal{F}}_{s}]\\
    =& \bar{\mathbb{E}}[\sum_i^N\sum_{k=1}\langle \sigma_{k, \epsilon}(\bar{u}_{t_i}-w_{t_i}), e_j\rangle(\mathcal{I}a^{k,j})_{t_i, t_{i+1}})|\bar{\mathcal{F}}_s].
\end{align*}

Upon taking expectation we obtain the identity 
\begin{equation}
    \bar{\mathbb{E}}[\langle M_t-M_s, e_j\rangle \langle  \int_s^t \sigma_{\epsilon, N}(r)d\bar{B}_r, e_j\rangle]={\mathbb{E}}\sum_i^N\bar[\sum_{k=1}\langle \sigma_{k, \epsilon}(\bar{u}_{t_i}-w_{t_i}), e_j\rangle(\mathcal{I}a^{k,j})_{t_i, t_{i+1}}].
    \label{riemann-stieltjes}
\end{equation}
We will now show that the above converges as a Riemann-Stieltjes integral. With this aim in mind, we begin to  observe  that $t\to \langle \sigma_{k, \epsilon}(\bar{u}_{t}-w_{t}), e_j\rangle$ is continuous and bounded. Moreover, the function $t\to (\mathcal{I}a^{k,j})_{t}$ is of bounded variation $\bar{\mathbb{P}}$-almost surely. Indeed, recall that $(\mathcal{I}a^{k,j})_{t}$ denotes the sewing of the germ $a^{i, j}_{s,t}=\langle (\sigma_i*L_{s,t})(\bar{u}_s), e_j\rangle$.  One can then  readily verify that for any partition $P=\{s_1\leq \ldots \leq n_P\}$ of $[0,T]$ we have 
\begin{equation}
    \sum_{s_i\in P}|(\mathcal{I}a^{k,j})_{s_{i+1}}-(\mathcal{I}a^{k,j})_{s_{i}}|\leq \left((\mathcal{I}b^{k})_{T}\right)^{1/2},
    \label{BV trick}
\end{equation}
where $b^{k}=\int_\mathbb{T} (\sigma_k^2*L_{s,t})(\bar{u}_s)$. In the case of smooth $\sigma_k$ this follows essentially from the following estimates
\begin{align*}
    \sum_{s_i\in P}|(\mathcal{I}a^{k,j})_{s_{i+1}}-(\mathcal{I}a^{k,j})_{s_{i}}|&=\sum_{s_i\in P}|\int_{s_i}^{s_{i+1}}\langle (\sigma_k(\bar{u}_r-w_r), e_j\rangle \dd r|\\
    &\lesssim \sum_{s_i\in P}\int_{s_i}^{s_{i+1}}|\langle (\sigma_k(\bar{u}_r-w_r), e_j\rangle|\dd r\\
    &\lesssim \left(\int_0^T\int_{\mathbb{T}}\sigma_k^2(\bar{u}_r-w_r)\dd x\dd r\right)^{1/2}.
\end{align*}
The general case $\sigma_k^2\in L^p$ being again a consequence of a mollification argument and the stability of the sewings (see Lemma \ref{identification}). Using the notation $V_{[0, T]}(f)$ for the total variation norm of  a function $f$ along the interval $[0, T]$, we therefore conclude from \eqref{BV trick} that 
\[
\bar{\mathbb{E}}[(V_{[0,T]}(\mathcal{I}a^{k, j}))^2]\lesssim \bar{\mathbb{E}}[(\mathcal{I}b^{k,j})_{T}]<\infty
\]
meaning that indeed, $t\to (\mathcal{I}a^{k, j})_t$ is of bounded variation $\bar{\mathbb{P}}$-almost surely. The right hand side of \eqref{riemann-stieltjes} is therefore  well defined as a Riemann-Stieltjes integral, for which we have the bound
\begin{align*}
   \left| \bar{\mathbb{E}}[\sum_i^N\sum_{k=1}\langle \sigma_{k, \epsilon}(\bar{u}_{t_i}-w_{t_i}), e_j\rangle(\mathcal{I}a^{k,j})_{t_i, t_{i+1}})]\right|&\lesssim \sum_{k=1} \bar{\mathbb{E}}[\norm{\sigma_{k, \epsilon}}_\infty V_{[0,T]}(\mathcal{I}a^{k, j})]<\infty
\end{align*}
 Hence, denoting $g^{k,j}_t:=\langle  \sigma_{k, \epsilon}(\bar{u}_{t}-w_{t}) e_j\rangle$ and $h^{j,k}_t:=(\mathcal{I}a^{k,j})_t$ we have
\[
\lim_{N\to \infty}\sum_i^N \bar{\mathbb{E}}[\sum_{k=1}\langle \sigma_{k, \epsilon}(\bar{u}_{t_i}-w_{t_i}), e_j\rangle(\mathcal{I}a^{k,j})_{t_i, t_{i+1}})]=\bar{\mathbb{E}}[\sum_{k=1} \int_0^t g^{k,j}_s dh^{j,k}_s],
\]
where the right hand side integrals are understood as Riemann-Stieltjes integrals. Finally, again by a mollification argument on $\sigma$  similar to Lemma \ref{identification} one verifies that 
\[
\sum_{k=1} \int_0^t g^{k,j}_s dh^{j,k}_s=\bar{\mathbb{E}}[\sum_k\int_0^t \left(\int_\mathbb{T}\sigma_{k}( \bar{u}_r-w_r) e_j\dd x\right)\left(\int_\mathbb{T}\sigma_{k, \epsilon}( \bar{u}_r-w_r) e_j\dd x \right)]
\]
meaning we have established \eqref{g sewing}.

Overall we therefore conclude that 
\begin{align*}
    &\bar{\mathbb{E}}[\langle M_t-\int_0^t \sigma_\epsilon(\bar{u}_s-w_s)d\bar{W}_s, e_j\rangle ^2]\\
    =&\mathbb{E}\sum_k\int_0^t|\int_\mathbb{T}\sigma_{k}( \bar{u}_r-w_r) e_j\dd x |^2 \dd r+\bar{\mathbb{E}}[\sum_k\int_0^t|\int_\mathbb{T}\sigma_{k, \epsilon}( \bar{u}_r-w_r) e_j\dd x |^2 \dd r]\\
&-2\bar{\mathbb{E}}[\sum_k\int_0^t \left(\int_\mathbb{T}\sigma_{k}( \bar{u}_r-w_r) e_j\dd x\right)\left(\int_\mathbb{T}\sigma_{k, \epsilon}( \bar{u}_r-w_r) e_j\dd x \right)].
\end{align*}
Due to the stability of the corresponding sewings (Lemma~\ref{regularity of averaging operator} and Lemma \ref{sewing convergence}) used similarly as in Lemma \ref{main convergence lemma}, we may indeed conclude our claim that 
\[
M_t=\int_0^t \sigma_\epsilon(\bar{u}_s-w_s)d\bar{W}_s.
\]

\end{proof}
In summary, this concludes the proof of Theorem \ref{main theorem}.

\appendix
\section{Appendix}
\subsection{Local time and occupation times formula}\label{app: local time section}
We recall for the reader the basic concepts of occupation measures, local times and the occupation times formula. For a comprehensive review paper on these topics, see  \cite{horowitz}. 
\begin{defn}
Let $w:[0,T]\to \RR^d$ be a measurable path. Then the occupation measure at time $t\in [0,T]$, written $\mu^w_t$, is the Borel measure on $\RR^d$ defined by 
\[
\mu^w_t(A):=\lambda(\{ s\in [0,t]:\ w_s\in A\}), \quad A\in \mathcal{B}(\RR^d),
\]
where $\lambda$ denotes the standard Lebesgue measure. 
\end{defn}
The occupation measure thus measures how much time the process $w$ spends in certain Borel sets. Provided for any $t\in [0,T]$, the measure is absolutely continuous with respect to the Lebesgue measure on $\RR^d$, we call the corresponding Radon-Nikodym derivative local time of the process $w$:
\begin{defn}
Let $w:[0,T]\to \RR^d$ be a measurable path. Assume that there exists a measurable function $L^w:[0,T]\times \RR^d\to \RR_+$ such that 
\[
\mu^w_t(A)=\int_A L^w_t(z)dz, 
\]
for any $A\in \mathcal{B}(\RR^d)$ and  $t\in [0,T]$. Then we call $L^w$ local time of $w$. 
\end{defn}
Note that by the definition of the occupation  measure, we have for any bounded measurable function $f:\RR^d\to \RR$ that 
\begin{equation}
    \int_0^tf(w_s)\dd s=\int_{\RR^d} f(z)\mu^w_t(dz).
    \label{occupation times formula}
\end{equation}
The above equation \eqref{occupation times formula} is called occupation times formula. Remark that in particular, provided $w$ admits a local time, we also have for any $x\in \RR^d$
\begin{equation}
    \int_0^tf(x-w_s)\dd s=\int_{\RR^d} f(x-z)\mu^w_t(dz)=\int_{\RR^d}f(x-z)L^w_t(z)dz=(f*L^w_t)(x).
\end{equation}
\subsection{The Sewing Lemma}
We recall the Sewing Lemma due to \cite{gubi} (see also \cite[Lemma 4.2]{frizhairer}). Let $E$ be a Banach space, $[0,T]$ a given interval. Let $\Delta_n$ denote the $n$-th simplex of $[0,T]$, i.e. $\Delta_n:\{(t_1, \dots, t_n)| 0\leq t_1\dots\leq t_n\leq T \} $. For a function $A:\Delta_2\to E$ define the mapping $\delta A: \Delta_3\to E$ via
\[
(\delta A)_{s,u,t}:=A_{s,t}-A_{s,u}-A_{u,t}
\]
Provided $A_{t,t}=0$ we say that for $\alpha, \beta>0$ we have $A\in C^{\alpha, \beta}_2(E)$ if $\norm{A}_{\alpha, \beta}<\infty$ where
\[
\norm{A}_\alpha:=\sup_{(s,t)\in \Delta_2}\frac{\norm{A_{s,t}}_E}{|t-s|^\alpha}, \qquad \norm{\delta A}_{\beta}:=\sup_{(s,u,t)\in \Delta_3}\frac{\norm{(\delta A)_{s,u,t}}_E}{|t-s|^\beta} \qquad \norm{A}_{\alpha, \beta}:=\norm{A}_\alpha+\norm{\delta A}_\beta
\]
For a function $f:[0,T]\to E$, we note $f_{s,t}:=f_t-f_s$

Moreover, if for any sequence $(\mathcal{P}^n([s,t]))_n$ of partitions of $[s,t]$ whose mesh size goes to zero, the quantity 
\[
\lim_{n\to \infty}\sum_{[u,v]\in \mathcal{P}^n([s,t])}A_{u,v}
\]
converges to the same limit, we note
\[
(\mathcal{I} A)_{s,t}:=\lim_{n\to \infty}\sum_{[u,v]\in \mathcal{P}^n([s,t])}A_{u,v}.
\]

\begin{lem}[Sewing]
Let $0<\alpha\leq 1<\beta$. Then for any $A\in C^{\alpha, \beta}_2(E)$, $(\mathcal{I} A)$ is well defined. Moreover, denoting $(\mathcal{I} A)_t:=(\mathcal{I} A)_{0,t}$, we have $(\mathcal{I} A)\in C^\alpha([0,T], E)$ and $(\mathcal{I} A)_0=0$ and for some constant $c>0$ depending only on $\beta$ we have
\[
\norm{(\mathcal{I} A)_{t}-(\mathcal{I} A)_{s}-A_{s,t}}_{E}\leq c\norm{\delta A}_\beta |t-s|^\beta.
\]
We say the germ $A$ admits a sewing $(\mathcal{I}A)$ and call $\mathcal{I}$ the sewing operator. 
\label{sewing}
\end{lem}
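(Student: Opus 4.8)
The plan is to follow the classical deterministic construction of Gubinelli. First I would fix $s<t$ and, for any partition $\cP = \{s = t_0 < t_1 < \dots < t_N = t\}$ of $[s,t]$, introduce the Riemann-type sum $A^{\cP}_{s,t} := \sum_{i=0}^{N-1} A_{t_i, t_{i+1}}$. The aim is then to show that these sums converge as the mesh $|\cP|$ tends to zero, that the resulting limit is additive in $(s,t)$, and that it obeys the stated bound against $\|\delta A\|_\beta$.

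The key step is a coarsening estimate. If $N \geq 2$, a pigeonhole argument (using $\sum_{i=1}^{N-1}(t_{i+1}-t_{i-1}) \leq 2(t-s)$) produces an interior index $i$ with $t_{i+1} - t_{i-1} \leq \frac{2(t-s)}{N-1}$; deleting $t_i$ from $\cP$ yields a partition $\cP'$ with one fewer point, and
\[
A^{\cP}_{s,t} - A^{\cP'}_{s,t} = A_{t_{i-1}, t_i} + A_{t_i, t_{i+1}} - A_{t_{i-1}, t_{i+1}} = -(\delta A)_{t_{i-1}, t_i, t_{i+1}},
\]
so that $\|A^{\cP}_{s,t} - A^{\cP'}_{s,t}\|_E \leq \|\delta A\|_\beta \big(\tfrac{2(t-s)}{N-1}\big)^\beta$. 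Iterating this down to the trivial partition $\{s,t\}$ and summing the series $\sum_{j \geq 1} j^{-\beta} = \zeta(\beta) < \infty$, which converges precisely because $\beta > 1$, gives the uniform bound $\|A^{\cP}_{s,t} - A_{s,t}\|_E \leq 2^\beta \zeta(\beta)\, \|\delta A\|_\beta\, |t-s|^\beta$, valid for \emph{every} partition $\cP$ of $[s,t]$.

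With this uniform estimate, convergence follows from a Cauchy argument. Given two partitions $\cP_1, \cP_2$ of $[s,t]$, pass to their common refinement $\cP_1 \vee \cP_2$; applying the uniform bound on each subinterval $[u,v] \in \cP_1$ and summing yields
\[
\|A^{\cP_1 \vee \cP_2}_{s,t} - A^{\cP_1}_{s,t}\|_E \leq 2^\beta \zeta(\beta) \|\delta A\|_\beta \sum_{[u,v] \in \cP_1} |v-u|^\beta \leq 2^\beta \zeta(\beta) \|\delta A\|_\beta\, |\cP_1|^{\beta-1} |t-s|,
\]
and symmetrically for $\cP_2$; since $\beta>1$, both right-hand sides vanish as $|\cP_1|,|\cP_2| \to 0$, so $(A^{\cP}_{s,t})$ is Cauchy in $E$ and the limit $(\mathcal{I}A)_{s,t}$ is well defined and independent of the approximating sequence. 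Passing to the limit in the uniform bound gives $\|(\mathcal{I}A)_{s,t} - A_{s,t}\|_E \leq 2^\beta \zeta(\beta) \|\delta A\|_\beta |t-s|^\beta$, which is the claimed inequality.

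It remains to record the structural properties. Additivity $(\mathcal{I}A)_{s,u} + (\mathcal{I}A)_{u,t} = (\mathcal{I}A)_{s,t}$ is obtained by restricting to partitions of $[s,t]$ containing the point $u$ and splitting the sum; hence, setting $(\mathcal{I}A)_t := (\mathcal{I}A)_{0,t}$, one has $(\mathcal{I}A)_{s,t} = (\mathcal{I}A)_t - (\mathcal{I}A)_s$ and $(\mathcal{I}A)_0 = (\mathcal{I}A)_{0,0} = 0$, using $A_{t,t} = 0$. Hölder regularity then follows from
\[
\|(\mathcal{I}A)_t - (\mathcal{I}A)_s\|_E \leq \|(\mathcal{I}A)_{s,t} - A_{s,t}\|_E + \|A_{s,t}\|_E \lesssim |t-s|^\beta + |t-s|^\alpha \lesssim_T |t-s|^\alpha,
\]
since $\beta > 1 \geq \alpha$ and $t-s \leq T$, so $\mathcal{I}A \in C^\alpha([0,T], E)$. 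I expect the combinatorial coarsening step to be the main obstacle: at each stage one must select a point whose deletion perturbs the sum only by a quantity that stays summable over the successive deletions, and this is exactly where the hypothesis $\beta > 1$ is essential; the rest is bookkeeping with triangle inequalities.
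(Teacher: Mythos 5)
Your proof is correct and is precisely the classical coarsening argument of Gubinelli that the paper itself does not reproduce but simply cites (Gubinelli; Friz--Hairer, Lemma 4.2). All steps check out: the pigeonhole selection of a deletable interior point, the summability of $\sum_j j^{-\beta}$ for $\beta>1$, the Cauchy argument via common refinements, and the deduction of additivity and $\alpha$-H\"older regularity of $t\mapsto(\mathcal{I}A)_{0,t}$ from the uniform bound together with $A\in C^{\alpha,\beta}_2(E)$.
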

Let us finally cite a result allowing to commute limits and sewings. 
\begin{lem}[Lemma A.2 \cite{Galeati2021}]
\label{sewing convergence}
For $0<\alpha\leq 1<\beta$ and $E$ a Banach space, let  $A\in C^{\alpha, \beta}_2(E)$ and $ (A^n)_n\subset C^{\alpha, \beta}_2(E)$ such that for some $R>0$, $\sup_{n\in \mathbb{N}}\norm{\delta A^n}_\beta\leq R$ and such that $\norm{A^n-A}_\alpha\to 0$. Then \[\norm{\mathcal{I}(A-A^n)}_\alpha\to 0.\]
\end{lem}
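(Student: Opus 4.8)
The plan is to combine the linearity of the sewing operator with a dyadic decomposition that interpolates between the two scales $\alpha$ and $\beta$. Since $\mathcal{I}$ is linear, $\mathcal{I}(A-A^n)=\mathcal{I}A-\mathcal{I}A^n=\mathcal{I}B^n$ where $B^n:=A-A^n$. First I would record that $B^n\in C^{\alpha,\beta}_2(E)$: it vanishes on the diagonal, $\norm{\delta B^n}_\beta\le\norm{\delta A}_\beta+\norm{\delta A^n}_\beta\le\norm{\delta A}_\beta+R=:M$ uniformly in $n$, and $\norm{B^n}_\alpha\to 0$ by assumption (in particular $\norm{B^n}_\alpha$ is bounded). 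Hence $\mathcal{I}B^n$ is well defined by Lemma \ref{sewing}, and the goal reduces to showing $\norm{\mathcal{I}B^n}_\alpha\to 0$ from these two facts alone.

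The key observation is that a single application of the sewing estimate $\norm{(\mathcal{I}B^n)_{s,t}-B^n_{s,t}}_E\le c\norm{\delta B^n}_\beta|t-s|^\beta$ only yields $\norm{\mathcal{I}B^n}_\alpha\lesssim\norm{B^n}_\alpha+MT^{\beta-\alpha}$, which is bounded but does not vanish; one must refine using additivity of the sewing. I would fix $s<t$ and let $\mathcal{P}_k$ be the partition of $[s,t]$ into $2^k$ intervals of equal length $|t-s|2^{-k}$. By additivity of $(\mathcal{I}B^n)$ we have $(\mathcal{I}B^n)_{s,t}=\sum_{[u,v]\in\mathcal{P}_k}(\mathcal{I}B^n)_{u,v}$; writing each summand as $B^n_{u,v}+\big((\mathcal{I}B^n)_{u,v}-B^n_{u,v}\big)$, then bounding the first piece by $\norm{B^n}_\alpha|v-u|^\alpha$ and the second by $cM|v-u|^\beta$ via Lemma \ref{sewing}, the triangle inequality yields
\[
\norm{(\mathcal{I}B^n)_{s,t}}_E\le\norm{B^n}_\alpha\,|t-s|^\alpha\,2^{k(1-\alpha)}+cM\,|t-s|^\beta\,2^{k(1-\beta)}.
\]
Dividing by $|t-s|^\alpha$, using $|t-s|\le T$ and $\beta>\alpha$, and taking the supremum over $s\ne t$ gives, for every $k\in\mathbb{N}$,
\[
\norm{\mathcal{I}B^n}_\alpha\le\norm{B^n}_\alpha\,2^{k(1-\alpha)}+cMT^{\beta-\alpha}\,2^{k(1-\beta)}.
\]

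To conclude, given $\varepsilon>0$ I would first choose $k$ so large that $cMT^{\beta-\alpha}2^{k(1-\beta)}<\varepsilon/2$ — possible precisely because $\beta>1$, so $2^{k(1-\beta)}\to 0$ — and then, with this $k$ now fixed, pick $N\in\mathbb{N}$ with $\norm{B^n}_\alpha 2^{k(1-\alpha)}<\varepsilon/2$ for all $n\ge N$, which is possible since $\norm{B^n}_\alpha\to 0$. This gives $\norm{\mathcal{I}(A-A^n)}_\alpha<\varepsilon$ for $n\ge N$. The step requiring the most care is exactly this order of quantifiers: the $\beta$-controlled remainder term is uniform in $n$ but would blow up under a naive Hölder estimate, so it must be killed first by choosing the dyadic level, and only afterwards may one send $n\to\infty$ to kill the $\alpha$-controlled term; the rest of the argument is entirely elementary.
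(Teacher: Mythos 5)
Your argument is correct and complete: the reduction to $B^n=A-A^n$ with $\sup_n\norm{\delta B^n}_\beta<\infty$ and $\norm{B^n}_\alpha\to 0$, the dyadic refinement combining additivity of the sewing with the remainder bound of Lemma \ref{sewing}, and the order of quantifiers (first $k$, then $n$) are all sound, and the exponent bookkeeping $2^{k(1-\alpha)}$ versus $2^{k(1-\beta)}$ with $\beta>1$ is exactly what makes the interpolation close. The paper does not prove this lemma but cites it from \cite{Galeati2021}, and your proof is the standard argument given there, so nothing further is needed.
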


\subsection{Auxiliary Lemmata}

    \begin{lem}[A Schauder estimate]
    \label{schauder}
For any  $s\leq t$ and $\theta\in [0,1]$ and $\rho/2+\theta\geq 0$ we have that 
\begin{equation}
    \norm{(-\Delta)^{\rho/2}(P_t-P_s)}_{L(H, H)}\lesssim (t-s)^\theta s^{-(\theta+\rho/2)}
\end{equation}
\end{lem}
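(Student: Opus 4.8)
The plan is to diagonalise both operators in the Fourier basis on $\TT$, thereby reducing the claimed operator norm bound to a scalar inequality. Since $-\Delta$ with periodic boundary conditions has an orthonormal eigenbasis $(e_k)_k$ with eigenvalues $(\lambda_k)_k\subset[0,\infty)$, both $P_t=e^{t\Delta}$ and $(-\Delta)^{\rho/2}$ act as Fourier multipliers with symbols $e^{-t\lambda_k}$ and $\lambda_k^{\rho/2}$ respectively; hence $(-\Delta)^{\rho/2}(P_t-P_s)$ is the multiplier with symbol $\lambda_k^{\rho/2}(e^{-t\lambda_k}-e^{-s\lambda_k})$, and
\[
\norm{(-\Delta)^{\rho/2}(P_t-P_s)}_{\cL(H)}=\sup_k\lambda_k^{\rho/2}\bigl|e^{-t\lambda_k}-e^{-s\lambda_k}\bigr|\le\sup_{\lambda\ge 0}\lambda^{\rho/2}\bigl|e^{-t\lambda}-e^{-s\lambda}\bigr| .
\]
(When $\rho<0$ the symbol $\lambda^{\rho/2}$ is understood on the orthogonal complement of the constants, equivalently one replaces $-\Delta$ by $1-\Delta$; this changes nothing below.) It thus suffices to bound $g(\lambda):=\lambda^{\rho/2}|e^{-t\lambda}-e^{-s\lambda}|$ uniformly in $\lambda$ by $C(t-s)^\theta s^{-(\theta+\rho/2)}$.

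For the scalar estimate, writing $e^{-t\lambda}-e^{-s\lambda}=-\int_s^t\lambda e^{-r\lambda}\dd r$ and using that $r\mapsto e^{-r\lambda}$ is nonincreasing gives the two elementary bounds
\[
\bigl|e^{-t\lambda}-e^{-s\lambda}\bigr|\le(t-s)\lambda e^{-s\lambda},\qquad \bigl|e^{-t\lambda}-e^{-s\lambda}\bigr|\le 2e^{-s\lambda}.
\]
Interpolating these two bounds with weights $\theta$ and $1-\theta$ (recall $\theta\in[0,1]$) yields
\[
\bigl|e^{-t\lambda}-e^{-s\lambda}\bigr|\le\bigl[(t-s)\lambda e^{-s\lambda}\bigr]^{\theta}\bigl[2e^{-s\lambda}\bigr]^{1-\theta}=2^{1-\theta}(t-s)^{\theta}\lambda^{\theta}e^{-s\lambda},
\]
so that $g(\lambda)\le 2^{1-\theta}(t-s)^{\theta}\lambda^{\theta+\rho/2}e^{-s\lambda}$.

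To conclude, set $\mu:=\theta+\rho/2$, which is $\ge 0$ by hypothesis. The elementary optimisation $\sup_{x\ge 0}x^{\mu}e^{-x}=(\mu/e)^{\mu}<\infty$ (with the convention $0^0=1$), applied with $x=s\lambda$, gives $\lambda^{\mu}e^{-s\lambda}\le(\mu/e)^{\mu}s^{-\mu}$. Combining the last two displays,
\[
g(\lambda)\le 2^{1-\theta}(\mu/e)^{\mu}(t-s)^{\theta}s^{-(\theta+\rho/2)}
\]
uniformly in $\lambda\ge 0$, which is precisely the asserted bound with a constant depending only on $\theta$ and $\rho$. There is no genuine obstacle here; the only points deserving attention are the bookkeeping around the zero Fourier mode when $\rho<0$ (handled by the parenthetical remark above), and observing that the hypothesis $\theta+\rho/2\ge 0$ is exactly what is needed for the final optimisation step to yield a finite constant (and that $\theta\le 1$ is what legitimises the interpolation).
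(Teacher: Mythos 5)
Your proof is correct and follows essentially the same route as the paper's: diagonalise in the Fourier eigenbasis, bound the scalar symbol by interpolating the Lipschitz-type estimate against the trivial bound $2e^{-s\lambda}$, and absorb the remaining power of $\lambda$ via $\sup_{x\ge 0}x^{\mu}e^{-x}<\infty$. If anything, your version is slightly more careful than the paper's, since you retain the factor $e^{-s\lambda}$ in the first elementary bound (which keeps the final optimisation valid even at $\theta=1$) and you explicitly address the zero Fourier mode when $\rho<0$.
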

\begin{proof}
We have for $k\in \mathbb{Z}$
\begin{equation*}
(-\Delta)^{\rho/2}(P_t-P_s)e_k
    =k^{\rho}(e^{-k^2t}-e^{-k^2s})e_k.
\end{equation*}
We note that 
\[
|e^{-k^2t}-e^{-k^2s}|=|\int_{k^2s}^{k^2t}-e^{-x}\dd x|\leq k^2(t-s), \qquad |e^{-k^2t}-e^{-k^2s}|\leq 2e^{-k^2s} 
\]
Interporlating the above bounds yields
\begin{equation*}
     k^{\rho}(e^{-k^2t}-e^{-k^2s})\lesssim (t-s)^\theta (k^2s)^{\rho/2+\theta}e^{-k^2s(1-\theta)}s^{-(\rho/2+\theta)}\lesssim (t-s)^\theta s^{-(\rho/2+\theta)}
\end{equation*}
Thus, for any $v\in H$  we have 
\[
\norm{(-\Delta)^{\rho/2}(P_t-P_s)v}_{H}=(t-s)^\theta s^{-(\rho/2+\theta)}\left(\sum_k \langle v, e_k\rangle^2 \right)^{1/2}=(t-s)^\theta s^{-(\rho/2+\theta)}\norm{v}_H
\]
yielding the claim.
\end{proof}

\bibliography{biblio}

\begin{thebibliography}{ABLM22}

\bibitem[ABLM22]{athreya2022wellposedness}
Siva Athreya, Oleg Butkovsky, Khoa Lê, and Leonid Mytnik.
\newblock Well-posedness of stochastic heat equation with distributional drift
  and skew stochastic heat equation, 2022.
\newblock arXiv:2011.1349.

\bibitem[Bec22]{bechtold2}
Florian Bechtold.
\newblock {Regularization by random translation of potentials for the
  continuous PAM and related models in arbitrary dimension}.
\newblock {\em Electronic Communications in Probability}, 27:1 -- 13, 2022.

\bibitem[Bec23]{Bechtold2021}
Florian Bechtold.
\newblock Strong solutions of semilinear {SPDEs} with unbounded diffusion.
\newblock {\em Stochastics and Partial Differential Equations: Analysis and
  Computations}, 11(1):1--30, October 2023.

\bibitem[BFH18]{BreitFeireislHofmanova}
Dominic Breit, Eduard Feireisl, and Martina Hofmanová.
\newblock {\em Stochastically Forced Compressible Fluid Flows}.
\newblock De Gruyter, 2018.

\bibitem[BH23]{bechtold}
Florian Bechtold and Martina Hofmanová.
\newblock Weak solutions for singular multiplicative sdes via regularization by
  noise.
\newblock {\em Stochastic Processes and their Applications}, 157:413--435,
  2023.

\bibitem[BW23]{bechtold2023pathwise}
Florian Bechtold and Jörn Wichmann.
\newblock A pathwise regularization by noise phenomenon for the evolutionary
  $p$-laplace equation, 2023.
\newblock arXiv:2209.13448.

\bibitem[CG16]{CATELLIER20162323}
R.~Catellier and M.~Gubinelli.
\newblock Averaging along irregular curves and regularisation of odes.
\newblock {\em Stochastic Processes and their Applications}, 126(8):2323--2366,
  2016.

\bibitem[CH23]{catellier2021pathwise}
R{\'e}mi Catellier and Fabian~A. Harang.
\newblock {Pathwise regularization of the stochastic heat equation with
  multiplicative noise through irregular perturbation}.
\newblock {\em Annales de l'Institut Henri Poincaré, Probabilités et
  Statistiques}, 59(3):1572 -- 1609, 2023.

\bibitem[DPZ92]{da_prato_zabczyk_1992}
Guiseppe Da~Prato and Jerzy Zabczyk.
\newblock {\em Stochastic Equations in Infinite Dimensions}.
\newblock Encyclopedia of Mathematics and its Applications. Cambridge
  University Press, 1992.

\bibitem[DPZ14]{cf:DpZ}
G.~Da~Prato and J.~Zabczyk.
\newblock {\em Stochastic equations in infinite dimensions}, volume 152 of {\em
  Encyclopedia of Mathematics and its Applications}.
\newblock Cambridge University Press, Cambridge, second edition, 2014.

\bibitem[FH14]{frizhairer}
Peter~K. Friz and Martin Hairer.
\newblock {\em A course on rough paths}.
\newblock Universitext. Springer, Cham, 2014.

\bibitem[Gal21]{Galeati2021}
Lucio Galeati.
\newblock Nonlinear young differential equations: A review.
\newblock {\em Journal of Dynamics and Differential Equations}, February 2021.

\bibitem[GG22]{galeati2020noiseless}
Lucio Galeati and Massimiliano Gubinelli.
\newblock Noiseless regularisation by noise.
\newblock {\em Rev. Mat. Iberoam.}, 38(2):433--502, 2022.

\bibitem[GG23]{galeati2023prevalence}
Lucio Galeati and Massimiliano Gubinelli.
\newblock Prevalence of $\rho$-irregularity and related properties, 2023.

\bibitem[GH80]{horowitz}
Donald Geman and Joseph Horowitz.
\newblock {Occupation Densities}.
\newblock {\em The Annals of Probability}, 8(1):1 -- 67, 1980.

\bibitem[GH22]{galeatiharang}
Lucio Galeati and Fabian~A. Harang.
\newblock {Regularization of multiplicative SDEs through additive noise}.
\newblock {\em The Annals of Applied Probability}, 32(5):3930 -- 3963, 2022.

\bibitem[GHN21]{GERASIMOVICS2021109200}
Andris Gerasimovičs, Antoine Hocquet, and Torstein Nilssen.
\newblock Non-autonomous rough semilinear pdes and the multiplicative sewing
  lemma.
\newblock {\em Journal of Functional Analysis}, 281(10):109200, 2021.

\bibitem[Gub04]{gubi}
M~Gubinelli.
\newblock Controlling rough paths.
\newblock {\em J. Func. Anal.}, 216(1):86 -- 140, 2004.

\bibitem[Hof13]{hofmanova2013degenerate}
Martina Hofmanov{\'a}.
\newblock Degenerate parabolic stochastic partial differential equations.
\newblock {\em Stochastic Processes and their Applications},
  123(12):4294--4336, 2013.

\bibitem[HP21]{harang2020cinfinity}
Fabian~Andsem Harang and Nicolas Perkowski.
\newblock {$C^\infty$}-regularization of {ODE}s perturbed by noise.
\newblock {\em Stoch. Dyn.}, 21(8):Paper No. 2140010, 29, 2021.

\bibitem[HT21]{Harang_Tindel21}
Fabian~A. Harang and Samy Tindel.
\newblock Volterra equations driven by rough signals.
\newblock {\em Stochastic Process. Appl.}, 142:34--78, 2021.

\bibitem[L{\^e}20]{Le20}
Khoa L{\^e}.
\newblock {A stochastic sewing lemma and applications}.
\newblock {\em Electronic Journal of Probability}, 25(none):1 -- 55, 2020.

\bibitem[Sim87]{simon}
Jacques Simon.
\newblock Compact sets in the space {$L^p(0,T;B)$}.
\newblock {\em Ann. Mat. Pura Appl. (4)}, 146:65--96, 1987.

\bibitem[Zvo74]{Zvonkin_1974}
A~K Zvonkin.
\newblock A transformation of the phase space of a diffusion process that
  removes the drift.
\newblock {\em Mathematics of the USSR-Sbornik}, 22(1):129, feb 1974.

\end{thebibliography}
\bibliographystyle{alpha}
\end{document}